\newcommand\Es[1]{\mathbb{E}\left[#1\right]}
\renewcommand\Pr[1]{\mathbb{P}\left(#1\right)}
\def \R {\mathbb R}
\def \L {\mathbb{L}}
\def \E {\mathcal E}
\def \oE {\overline{\mathcal{E}}}
\def \P {\mathbb{P}}
\def \Exp {\textnormal{\textsf{Exp}}}
\newtheorem{thm}{Theorem}
\newtheorem{lem}{Lemma}[section]
\newtheorem{prop}[lem]{Proposition}
\newtheorem{cor}[lem]{Corollary}
\newtheorem{thmbis}[lem]{Theorem}
\theoremstyle{definition}
\newtheorem{defn}[lem]{Definition}
\title{  \vspace {-2cm}\textbf{A predator-prey SIR type dynamics on large complete graphs with three phase transitions}}
\date{}
\author{Igor Kortchemski\thanks{DMA, École Normale Supérieure, Paris; E-mail: igor.kortchemski@normalesup.org}}
\DeclareSymbolFont{extraup}{U}{zavm}{m}{n}
\DeclareMathSymbol{\varheart}{\mathalpha}{extraup}{86}
\DeclareMathSymbol{\vardiamond}{\mathalpha}{extraup}{87}
\renewcommand*{\@fnsymbol}[1]{\ensuremath{\ifcase#1\or  \vardiamond \or \clubsuit\or \spadesuit\or
   \mathsection\or \mathparagraph\or \|\or **\or \dagger\dagger
   \or \ddagger\ddagger \else\@ctrerr\fi}}
\begin{document}
\maketitle

\let\thefootnote\relax\footnotetext{ \\\emph{MSC2010 subject classifications}. Primary 60J80, 60F05, 60F25; secondary 92D30. \\
 \emph{Keywords and phrases.} Chase-escape process, Predator-prey dynamics, SIR dynamics, Coupling, Athreya--Karlin embedding, Yule process}
 
\vspace {-0.5cm}

\begin{abstract}  
We study a variation of the SIR (Susceptible/Infected/Recovered)  dynamics on the complete graph, in which infected individuals may only spread to neighboring susceptible individuals at fixed rate $ \lambda>0$ while recovered individuals may only spread to neighboring infected individuals at fixed rate $1$. This is also a variant of the so-called chase-escape process introduced by Kordzakhia and then Bordenave. Our work is the first study of this dynamics on complete graphs. Starting with one infected and one recovered individuals on the complete graph with $N+2$ vertices, and stopping the process when one type of individuals disappears, we study the asymptotic behavior of the probability that the infection spreads to the whole graph as $N \rightarrow \infty$ and show that for $ \lambda \in (0,1)$ (resp. for $ \lambda>1$), the infection dies out (resp. does not die out) with probability tending to one as $N \rightarrow \infty$, and that the probability that the infection dies out tends to $1/2$ for $ \lambda=1$.  We also establish limit theorems concerning the final state of the system in all regimes and show that two additional phase transitions occur in the subcritical phase $ \lambda \in (0,1)$: at $ \lambda=1/2$ the behavior of the expected number of remaining infected individuals changes, while at $ \lambda=( \sqrt {5}-1)/2$ the behavior of the expected number of remaining recovered individuals changes.  We also study the outbreak sizes of the infection, and show that the outbreak sizes are small (or self-limiting) if $ \lambda  \in (0,1/2]$, exhibit a power-law behavior for $ 1/2 < \lambda <1$, and are pandemic for $ \lambda \geq 1$. Our method relies on different couplings: we first couple the dynamics with  two independent Yule processes by using an Athreya--Karlin embedding, and then we couple the Yule processes with Poisson processes thanks to Kendall's representation of Yule processes.

\end{abstract}

\section*{Introduction}

We investigate the asymptotic behavior of a variation of the SIR (Susceptible/Infected/Recovered) dynamics on the complete graph. It is a stochastic dynamics of two competing species and may be informally defined as follows. Vertices can only be of three types: susceptible, infected or recovered. At fixed rate $ \lambda>0$, infected individuals may only spread their infection to neighbors who are susceptible (when an infected individual spreads its infection to a neighboring susceptible individual, both of them are then infected), while at fixed rate $1$ recovered individuals may only spread their recovered state to infected neighbors (see below for a formal definition). This dynamics is called the \emph{chase-escape} process and appears in a work by Bordenave  \cite{Bor14}, who analyzed its behavior on infinite trees (see also \cite{Kor13}). 

  Earlier, Kordzakhia \cite{Kord05} has introduced and studied the \emph{escape} process on infinite trees, in which infected individuals still may only spread their infection to neighboring susceptible individuals at fixed rate $ \lambda>0$, but recovered individuals may  spread at fixed rate $1$ to neighbors that are either infected or susceptible individuals. Later, Bordenave \cite{Bor08} introduced the \emph{rumor-scotching process}, in which infected individuals may only spread to neighboring susceptible individuals at fixed rate $ \lambda>0$, and recovered individuals may spread at fixed rate $1$ to infected neighbors but only through edges that have already spread an infection: See \cite{Bor08} for a formal definition, where infected individuals are individuals propagating a rumor while recovered individuals are struggling to scotch it (see in particular \cite {CS14}, where the authors propose to use this model to predict the activity of Facebook). One may also think of the susceptible individuals as vacant vertices, infected individuals as prey, and recovered individuals as predators, and view the chase-escape process as a random foodchain. The rumor-scotching process can be seen a directed version of the chase-escape process.  Under suitable initial conditions, the behavior of the escape, chase-escape and rumor-scotching processes is the same on infinite trees. In \cite{Bor08}, Bordenave studied the rumor-scotching process on the complete graph, and showed that its scaling limit is the birth-and-assassination process, which was introduced by Aldous \& Krebs \cite{AK90}.  We finally mention that the chase-escape process is a variant of the famous and extensively studied Daley--Kendall model \cite {DK65} for rumor propagation (in which, in addition, an Infected individual may become recovered if it enters in contact with another infected individual or, in other words, when two individuals spreading the rumor meet, one of them stops spreading it).

In this work, we are interested in the asymptotic behavior of the chase-escape process on the complete graph with $N+2$ vertices as $N \rightarrow \infty$, starting with one infected and one recovered individual and stopped when either no susceptible or no infected individuals remain. To our knowledge, this is the first study of the chase-escape process on graphs different from trees. We will establish that the probability that no susceptible individual remains tends to $0$ as $N \rightarrow \infty$ if and only if $ \lambda > 1$. We also establish limit theorems concerning the state of the system at its absorbing state in all (subcritical, critical and supercritical) regimes. 

\paragraph{The chase-escape process.} We now give a formal definition of the chase-escape process following  Bordenave \cite{Bor14}. Let $G=(V,E)$ be a locally finite connected graph.  Set $ \mathfrak {X}= \{ S,I,R\}^{V}$ and for every $v \in V$, let $I_{v}, R_{v}:  \mathfrak {X}\rightarrow  \mathfrak {X}$ be the two maps defined by $(I_{v}(x))_{u}=(R_{v}(x))_{u}=x_{u}$ if $u \neq v$ and $(I_{v}(x) )_{v}=I$ and $(R_{v}(x))_{v}=R$ with $x=(x_{u})_{u \in V}$. By definition, the chase-escape process of infection intensity $ \lambda>0$ is the Markov process taking values in $ \mathfrak {X}$ with transition rates
$$Q(x,I_{v}(x))= \lambda  \cdot \mathbbm {1}_{ \{ x_{v}=S\} }  \cdot  \sum_{ \{ u,v\} \in E } \mathbbm {1}_{ \{ x_{u}=I\} }, \quad Q(x,R_{v}(x))= \mathbbm {1}_{ \{ x_{v}=I\} }  \cdot  \sum_{\{ u,v\} \in E} \mathbbm {1}_{ \{ x_{u}=R\} }  \qquad (v \in V, x \in \mathfrak {X}).$$  
This means that each infected vertex infects its susceptible neighbors at rate $ \lambda$ and that each recovered individual spreads its recovered state to its infected neighbors at rate $1$. In particular, an infected individual who recovers then stays recovered indefinitely. Note that this dynamics differs from the classical SIR epidemic model, where infected individuals recover at a fixed rate (not depending on their neighborhood). 

In the following, we will always consider the chase-escape process on the complete finite graph $K_{N+2}$ on $N+2$ vertices, where $N \geq 1$ is an integer, starting with one infected individual, one recovered individual and the other $N$ individuals all being susceptible. 
In addition,
\begin {center} \emph{we stop the chase-escape process once either no infected or no susceptible individuals remain.} \end {center}
Hence the absorbing states of this process are the states where no susceptible individuals are present (which we interpret as the fact that the infection has spread to the entire population) and where there are no infected individuals but where susceptible individuals remain (which we interpret as the fact that the infection has died out). The motivation for stopping the process when no susceptible individuals remain is to try understand the severity of the infection: the infection is severe if at the time of absorption there are a lot of infected individuals and a few recovered ones, while the infection is less severe if at the time of absorption there are a few infected individuals and many recovered ones.

\paragraph {The critical value of $ \lambda$.}  Let  $ E^{N}_{ext}$ be the event that there exists a (random) time when no susceptible individuals remain. Hence $\P(E^{N}_{ext})$ is the probability that the infection spreads to the whole graph. Denote also by $ {}^{c} E^{N}_{ext}$ the complementary event where the infection dies out. We first identify
 $ \lambda=1$ as a critical value, and also give the limiting value of $\P(E^{N}_{ext})$ as $N \rightarrow \infty$ for $ \lambda=1$.

\begin{thm} \label {thm:critical}We have:
$$ \P(E^{N}_{ext})  \quad\mathop{\longrightarrow}_{N \rightarrow \infty} \quad  \begin {cases} \quad 0 & \textrm{ if } \quad  \lambda \in (0,1)\\
 \quad \frac{1}{2} & \textrm{ if } \quad \lambda=1 \\
\quad 1 & \textrm{ if } \quad  \lambda>1. \end{cases}$$
\end{thm}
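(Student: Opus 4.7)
My plan is to follow the two-step coupling strategy announced in the abstract. First, observe that at state $(s,i,r)$ both transition rates, $\lambda si$ (infection) and $ri$ (recovery), are proportional to $i$, so the embedded jump chain of the chase-escape process projects to a Markov chain on $(s,r)$ alone, with $(s,r)\to(s-1,r)$ of probability $\lambda s/(\lambda s+r)$ and $(s,r)\to(s,r+1)$ of probability $r/(\lambda s+r)$, started at $(N,1)$. Since $i = N+2-s-r$ is determined by $(s,r)$, the stopping rule becomes $\{s=0\}$ (the event $E^N_{ext}$) versus $\{s+r=N+2\}$. An Athreya--Karlin embedding then realizes this jump chain as the jump chain of two independent continuous-time processes: a pure-death process $(N_S(t))$ started at $N$ in which each particle is killed at rate $\lambda$, and a rate-$1$ Yule process $(N_R(t))$ started at $1$.

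Let $\tau_k$ and $\sigma_k$ denote respectively the $k$-th death time of $N_S$ and the $k$-th birth time of $N_R$. A short inspection of the walk $k \mapsto \#\text{deaths} - \#\text{births}$ yields
\[
E^N_{ext} \;=\; \bigl\{\tau_k < \sigma_k \text{ for every } 1 \leq k \leq N\bigr\},
\]
together with the explicit representation
\[
\lambda\,\tau_k \;=\; \sum_{j=1}^{k}\frac{E_j}{N-j+1}, \qquad \sigma_k \;=\; \sum_{j=1}^{k}\frac{E'_j}{j},
\]
where $(E_j)$ and $(E'_j)$ are two independent i.i.d.\ families of $\Exp(1)$ random variables. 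Reindexing shows $\lambda\tau_N \stackrel{d}{=}\sigma_N$, and Kendall's representation of the Yule process yields $\sigma_N-H_N \to L_\sigma$ and $\lambda\tau_N-H_N \to L_\tau$ almost surely, where $L_\sigma,L_\tau$ are two independent Gumbel random variables and $H_N = \log N + \gamma + o(1)$. The mean gap is thus $\mathbb{E}[\sigma_N-\tau_N] = H_N(1-1/\lambda) + O(1)$.

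The two non-critical cases follow almost at once. If $\lambda<1$, the trivial inclusion $E^N_{ext}\subseteq\{\tau_N<\sigma_N\}$ and the divergence $H_N(1-1/\lambda)\to-\infty$ give $\P(E^N_{ext})\to 0$. If $\lambda>1$, the mean gap diverges to $+\infty$ so $\P(\tau_N<\sigma_N)\to 1$, and this is promoted to $\P(\forall k:\,\tau_k<\sigma_k)\to 1$ using that $\mathbb{E}\sigma_k-\mathbb{E}\tau_k = H_k-(H_N-H_{N-k})/\lambda > 0$ for every $k$, combined with standard sub-exponential concentration for sums of independent exponentials. At $\lambda=1$ the random variables $\tau_N$ and $\sigma_N$ are i.i.d., so $\P(\tau_N<\sigma_N)=1/2$ exactly for every $N$, which gives the easy direction $\P(E^N_{ext})\leq 1/2$.

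The delicate part is the matching lower bound $\P(E^N_{ext})\geq 1/2-o(1)$ at $\lambda=1$, which amounts to
\[
\P\bigl(\tau_N<\sigma_N,\ \exists\, k<N:\tau_k\geq\sigma_k\bigr) \;\xrightarrow[N\to\infty]{}\; 0.
\]
A naive union bound is too weak: for any fixed $m\geq 1$ the marginal $\P(\tau_{N-m}\geq\sigma_{N-m})$ stays bounded away from zero. The natural approach is to split $\tau_N-\sigma_N = (\tau_{N-m}-\sigma_{N-m})+Y_m$ into two independent blocks and to bound the joint event that the top block $Y_m$ (containing the dominant term $E_N/1-E'_N/N$) drives $\tau_N-\sigma_N$ below zero while the bulk block stays non-negative; the strong asymmetry of the rates yields an estimate of order $(N-m+1)^{-1}$ per $m$, which is summable for $m$ not too close to $N$. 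For $m$ comparable to $N$ one invokes Kendall's representation to couple $(\tau_k)$ and $(\sigma_k)$ to two independent Poisson processes on $[0,\infty)$, providing the uniform almost-sure control needed to close the argument. This propagation of the exact equality $\P(\tau_N<\sigma_N)=1/2$ to the full conjunction over all $k$ is the step I expect to demand the most care.
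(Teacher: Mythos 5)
Your overall strategy is the same as the paper's: decouple the jump chain into a pure-death chain and an independent rate-$1$ Yule process, characterize $E^N_{ext}$ as $\{\tau_k<\sigma_k\ \forall k\leq N\}$ (in your notation) and reduce to comparing $\tau_N$ with $\sigma_N$, using Kendall's representation to control the times. The treatment of $\lambda\in(0,1)$ via the trivial inclusion $E^N_{ext}\subset\{\tau_N<\sigma_N\}$ is correct and matches the paper. The crux — and it is where your proposal has a genuine gap — is the claim that $\Pr\bigl(\tau_N<\sigma_N,\ \exists k<N:\tau_k\geq\sigma_k\bigr)\to 0$, which is exactly the paper's Lemma 2.1 and is needed both for $\lambda=1$ (to upgrade the upper bound $1/2$ to a limit) and for $\lambda>1$.

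For $\lambda>1$ you assert this follows from the positivity of the expected gap $H_k-(H_N-H_{N-k})/\lambda$ plus ``standard sub-exponential concentration,'' but this is not enough to close a union bound over $k$ up to $N$. For $k$ of order $N$ the expected gap is only $\Theta(\log N)$, while the upper tail of $\sigma_N(k)$ inherits the (single-)exponential tail of the Gumbel fluctuation $-\lambda^{-1}\ln\overline{\mathcal{E}}$: one has $\Pr(\sigma_N(k)>\mathbb{E}\sigma_N(k)+s)\asymp e^{-\lambda s}$, so a per-$k$ bound of roughly $N^{-(\lambda-1)/2}$, and summing $\Theta(N)$ of these does not vanish unless $\lambda$ is large (roughly $\lambda>3$). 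The events for neighboring $k$ near $N$ are strongly dependent, which is precisely why the paper does not argue by independent concentration: it passes to the Poisson representation, splits at $i=\sqrt N$, and for the large-$i$ range uses the uniform bound $\overline{\tau}_i\leq i^{(1+\lambda)/2}$ (valid w.h.p.~since $\lambda>1$) together with $\Pr(\overline{\tau}_i<\mathcal{E})=2^{-i}$ to get a Poisson tail $\Pr(\mathcal{P}_{Mi^{(1+\lambda)/(2\lambda)}}>i)\leq e^{-Ci}$. You should not expect to avoid an argument of this kind. For $\lambda=1$ you explicitly flag the gap, but the proposed block decomposition with a per-$m$ estimate of order $(N-m+1)^{-1}$ is also problematic: $\sum_m (N-m+1)^{-1}\sim \ln N$, which is not $o(1)$, so the stated estimate cannot be the right one. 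The correct estimate, as in the paper's Lemma 2.1 for $\lambda=1$, is that the per-index probability for small indices decays like $(C/N^{3/4})^i$ (geometrically in $i$, not just polynomially in $N$), and for indices $i\geq\sqrt N$ one uses that $\mathcal{P}_{(1-\eta^2)i}>i$ has an exponentially small probability in $i$, conditioned on $\overline{\mathcal{E}}>\mathcal{E}$ with a margin. So: the skeleton of your argument is right, but the reduction to the jump-time comparison is only the beginning, and the ``delicate part'' you identify is genuinely the whole content of the proof. As written, neither the $\lambda>1$ nor the $\lambda=1$ case is actually established.
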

Here, note that \emph{ext} refers to the extinction of susceptible individuals. 

\paragraph{The number of remaining susceptible individuals.} Denote by $S^{(N)}$ the number of remaining susceptible individuals once the chase-escape process has reached an absorbing state. Our next contribution describes the asymptotic behavior of $S^{(N)}$ as $N \rightarrow \infty$.  Here and later, $ \Exp( \lambda)$ denotes an exponential random variable of parameter $ \lambda>0$, independent of all the other mentioned random variables (in particular, different occurrences of $\Exp(1)$ denote different independent random variables). The symbol $\displaystyle \mathop{\longrightarrow}^{(d)}$ denotes convergence in distribution for random variables.

\begin{thm}\label{thm:stateS1} 
\begin{enumerate}
 \item[(i)] Assume that $ \lambda \in (0,1)$.  Then  \begin{equation}
 \label{eq:S1}\displaystyle \frac{S^{(N)}}{N^{1- \lambda}}  \quad\mathop{\longrightarrow}^{(d)}_{N \rightarrow \infty} \quad  \textnormal{\textsf{Exp}}(1)^{ \lambda}.
 \end{equation}

 \item[(ii)] Assume that $ \lambda=1$.  Then for every $i \geq 0$, $ \displaystyle \Pr{S^{(N)}=i}  \rightarrow {1}/{2^{i+1}}$ as $N \rightarrow \infty$.
  In other words, $S^{(N)}$ converges in distribution to the random variable $G$ such that $ \P (G=i)=1/2^{i+1}$ for $i \geq 0$.
  \item[(iii)] Assume that $ \lambda>1$.  Then $S^{(N)}$ converges in probability to $0$ as $ N \rightarrow \infty$.  \end{enumerate}
\end {thm}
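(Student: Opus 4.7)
Part~(iii) is a direct reformulation of Theorem~\ref{thm:critical}: $S^{(N)}$ is a nonnegative integer random variable with $\{S^{(N)} = 0\} = E^N_{ext}$, so convergence of $S^{(N)}$ to $0$ in probability is equivalent to $\P(E^N_{ext}) \to 1$. For parts~(i) and~(ii), the plan is to exploit the Athreya--Karlin embedding to rewrite the chase-escape process as a time change of two \emph{independent} processes. Concretely, after the random time change $u = \int_0^t I(s)\,ds$, the susceptible count $S(u) = N - B_I(u)$ becomes a pure death process started from $N$ in which each particle dies independently at rate $\lambda$, while the recovered count $R(u) = 1 + B_R(u)$ becomes an independent Yule process of rate $1$ starting from $1$. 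In $u$-time the chase-escape is absorbed at $\min(T_N, U^\star)$, where $T_N := \inf\{u : S(u) = 0\}$ and $U^\star := \inf\{u : R(u)+S(u) = N+2\}$, and $S^{(N)} = S(\min(T_N, U^\star))$.

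\textbf{Heuristic via Kendall.} Kendall's representation furnishes an $\Exp(1)$ random variable $W$, measurable with respect to $R$, such that $R(u) e^{-u} \to W$ almost surely. Using $R(u) \approx W e^u$ and $\Es{S(u)} = N e^{-\lambda u}$, the equation $R+S = N+2$ is solved (to leading order) at $\widehat u_N := \log(N/W)$, at which $S(\widehat u_N) \approx N e^{-\lambda \widehat u_N} = N^{1-\lambda}\, W^\lambda$. The heuristic therefore predicts $S^{(N)}/N^{1-\lambda} \to W^\lambda = \Exp(1)^\lambda$ in distribution.

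\textbf{Proofs of (i) and (ii).} Making the heuristic rigorous proceeds in two steps: first, show that $U^\star = \widehat u_N + o_{\P}(1)$; second, transfer the value of $S$ from $\widehat u_N$ to $U^\star$ using that $S$ has only $o_{\P}(1)$ jumps on a small window around $\widehat u_N$. In case~(i), $\lambda \in (0,1)$, one has $N^{1-\lambda} W^\lambda \to \infty$ a.s., and the law of large numbers for the binomial $S(\widehat u_N) \sim \mathrm{Bin}(N, e^{-\lambda \widehat u_N})$ yields $S(\widehat u_N)/N^{1-\lambda} \to W^\lambda$ in probability conditionally on $W$, whence~(i). In case~(ii), $\lambda = 1$, $N e^{-\widehat u_N} = W$ is of order $1$, and the binomial $S(\widehat u_N) \sim \mathrm{Bin}(N, W/N)$ converges to $\mathrm{Pois}(W)$ conditionally on $W$. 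Integrating out $W \sim \Exp(1)$,
\begin{equation*}
\Pr{S^{(N)} = i} \longrightarrow \int_0^\infty \frac{w^i e^{-w}}{i!}\, e^{-w}\, dw \;=\; \frac{1}{2^{i+1}},
\end{equation*}
which proves~(ii); note that $i=0$ gives $1/2$, consistent with $\P(E^N_{ext}) \to 1/2$ from Theorem~\ref{thm:critical}.

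\textbf{Main obstacle.} The delicate point is the joint control of $U^\star$ and $S$ on a window around $\widehat u_N$: in the critical case $\lambda = 1$ even a single jump of $S$ shifts the integer-valued limit. The cleanest strategy, consistent with the couplings announced in the abstract, is to couple both $R$ and $S$ with Poisson processes---$R-1$ is a Poisson process of cumulative intensity $W(e^u - 1)$ given $W$ by Kendall, and $S$ admits an analogous structure via the reversed-Yule correspondence for pure death processes---so that the fluctuations of $R$ and $S$ can be estimated uniformly on a small window around the crossing time.
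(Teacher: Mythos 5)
Your part~(iii) is exactly the paper's one-line argument: $\{S^{(N)}>0\}= {}^{c}E^{N}_{ext}$, and Theorem~\ref{thm:critical} finishes it.

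For parts~(i) and~(ii), the plan you outline is essentially the route the paper takes (decouple via Athreya--Karlin; identify the crossing time with the $N$-th jump time $\rho(N)$ of the Yule process $\mathcal R$, whose terminal value $\mathcal E$ plays the role of your $W$, so that indeed $\widehat u_N=\log(N/W)=\rho(N)+o(1)$; compute $\mathcal S_N$ there). Your heuristic computations are correct, including the integration $\int_0^\infty \frac{w^i e^{-w}}{i!}e^{-w}\,dw=2^{-(i+1)}$ in case~(ii), and the binomial reading of $\mathcal S_N$ at a $\sigma(\mathcal R)$-measurable time is legitimate since $\mathcal S_N\perp\mathcal R$ in the decoupled picture. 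But what you flag as the ``main obstacle'' is precisely the nontrivial content of the paper's proof, and your proposal does not close it. Specifically, you need to show that the first time $\overline{\mathcal S}_N(t)<\mathcal R(t)-1$ (equivalently, the first passage of $\mathcal R+\mathcal S_N$ to $N+2$) lies in a shrinking window to the \emph{left} of $\rho(N)$, i.e.\ that the process cannot absorb early because some fluctuation makes $\mathcal R(t)-1$ overtake $\overline{\mathcal S}_N(t)$ at an intermediate time $t$. This is a uniform-in-$t$ estimate, not a pointwise one, and it is exactly Lemma~\ref{lem:tech2} (together with Lemma~\ref{lem:tech1} which handles the interface with the event $E^N_{ext}$). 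Its proof is a genuine piece of work: the paper represents both $\mathcal R$ and the time-reversed $\mathcal S_N$ as Poisson processes via Kendall and proves the non-crossing of the two Poisson counting functions by splitting into $i\leq N^{3/4}$ (handled by a direct moment/Stirling computation after integrating the density $ix^{i-1}$ of $\tau_i/(\tau_i+\mathcal E)$) and $i>N^{3/4}$ (handled by Chernoff bounds with a carefully tuned slack $1+N^{-1/3}$), and the width $1/\ln N$ of the excluded window is chosen to make the error exactly $O(N^{-2})$. Saying ``couple both $R$ and $S$ with Poisson processes so that fluctuations can be estimated uniformly'' is the right slogan but leaves the entire estimate to be done. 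Without it, neither the localization $U^\star=\widehat u_N+o_{\P}(1)$ nor the transfer of $\mathcal S_N$ from $\widehat u_N$ to the true absorption time is established, and in the critical case~(ii) even an $O(1)$ shift in the crossing time would change the integer-valued limit. So the proposal is a correct and well-aimed sketch of the paper's own argument, but the central technical lemma is a genuine missing step rather than a routine detail.

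One further small remark: your ``two-step'' plan asserts $U^\star=\widehat u_N+o_{\P}(1)$ as if the only issue were a scalar approximation, but since $\widehat u_N$ is a $\sigma(\mathcal R)$-measurable time while $U^\star$ depends jointly on $\mathcal R$ \emph{and} $\mathcal S_N$, the transfer needs a two-sided sandwich $\mathcal S_N(\rho(N))\leq S^{(N)}\leq \mathcal S_N(\rho(N)-\ln(N)^{-1})$ (as in the paper, after Proposition~\ref{prop:ext}(iii)) and then a separate proof that both sides have the same limit, rather than a single ``$S$ has $o_{\P}(1)$ jumps on a small window'' claim. Your heuristic correctly predicts that the increment is small, but that too is part of the missing estimate.
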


\noindent We make several comments on these results:
\begin{enumerate}
 \item[(i)] It is interesting to observe that for $ \lambda=1$, $S^{(N)}$ converges in distribution as $ N \rightarrow \infty$ without scaling. 
 \item[(ii)] When $ \lambda=1$, since $S^{(N)}=0$ on the event $E^{N}_{ext}$ and $ \Pr {E^{N}_{ext}} \rightarrow1/2$, assertion (ii) may be reformulated by saying that conditionally on ${}^{c}E^{N}_{ext}$, $ S^{(N)}$ converges in distribution to the positive random variable $G'$ whose law is given by $ \P(G'=i)=1/2^{i}$ for $i \geq 1$. Hence, for $ \lambda=1$, the number of non-infected individuals in a population which is not entirely infected is asymptotically constant (meaning that $ \lim_{N \rightarrow \infty} \Pr {S^{(N)} \geq K}$ tends to $0$ as $K \rightarrow \infty$).
 \end {enumerate}

\paragraph{The number of remaining recovered individuals.}  Denote by $R^{(N)}$ the number of recovered individuals once the chase-escape process has reached an absorbing state. Recall that chase-escape process is stopped once either no infected or no susceptible individuals remain.  We are next interested in the asymptotic behavior of $R^{(N)}$ as $N \rightarrow \infty$.

\begin{thm}\label{thm:stateR1} The following assertions hold.\begin{enumerate}
\item[(i)] Assume that $ \lambda \in (0,1)$. Then 
\begin{equation}
\label{eq:R1} \frac{N-R^{(N)}}{N^{1- \lambda}}  \quad\mathop{\longrightarrow}^{(d)}_{N \rightarrow \infty} \quad \Exp(1) ^{ \lambda}.
\end{equation}
\item[(ii)] Assume that $ \lambda=1$.  Then  \begin{equation}
\label{eq:R12}\frac{R^{(N)}}{N}  \quad\mathop{\longrightarrow}^{(d)}_{N \rightarrow \infty} \quad \frac{1}{2} \delta_{1} +  \frac{1}{(1+x)^{2}} \mathbbm {1}_{[0,1]}(x) dx,
\end{equation}
where $ \delta_{1}$ is a Dirac measure at $1$ and ${(1+x)^{-2}} \mathbbm {1}_{[0,1]}(x) dx$ denotes the measure with density ${(1+x)^{-2}}$ on $[0,1]$.
\item[(iii)] Assume that $ \lambda>1$. Then:
 \begin{equation}
 \label{eq:cvd1} \frac{R^{(N)}}{N^{1/\lambda}}  \quad\mathop{\longrightarrow}^{(d)}_{N \rightarrow \infty} \quad  \textnormal{\textsf{Exp}}(\textnormal{\textsf{Exp}}(1)^{1/\lambda}),
 \end{equation}
 where $\textnormal{\textsf{Exp}} (\textnormal{\textsf{Exp}}(1)^{1/\lambda})$ is an exponential random variable of independent random parameter $\textnormal{\textsf{Exp}}(1)^{1/\lambda}$.
\end {enumerate}
\end {thm}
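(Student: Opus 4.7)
The plan is to use an Athreya--Karlin-type embedding via the time change $\tau(t) := \int_0^t I_s\,ds$, under which the processes $S$ and $R$ become \emph{independent}: $(S_\tau)_{\tau \geq 0}$ is a linear death process starting at $N$ with individual death rate $\lambda$, $(R_\tau)_{\tau \geq 0}$ is a Yule process of rate $1$ starting at $1$, and $I_\tau = N+2-S_\tau-R_\tau$. This is visible from the rates in $\tau$-time: the transition $S \to S-1$ has rate $\lambda S_\tau$ and the transition $R \to R+1$ has rate $R_\tau$, neither depending on $I$. The chase-escape process stops at $\tau^* = \tau_S \wedge \tau_I$, with $\tau_S := \inf\{\tau : S_\tau = 0\}$, $\tau_I := \inf\{\tau : S_\tau+R_\tau = N+2\}$, and $E^N_{ext} = \{\tau_S < \tau_I\}$. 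Kendall's representation of the Yule process gives $e^{-\tau}R_\tau \to W_R$ almost surely with $W_R \sim \Exp(1)$; the death process admits $S_\tau = \#\{i \leq N : E_i > \tau\}$ with i.i.d.\ $E_i \sim \Exp(\lambda)$, so $\tau_S = \max_i E_i$ and classical extreme value theory yields $N\exp(-\lambda\tau_S) \to W_S \sim \Exp(1)$ in distribution. By construction $W_R$ and $W_S$ are independent.

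Case (i) follows at once: by Theorem~\ref{thm:critical}, $\P({}^cE^N_{ext}) \to 1$, and on ${}^cE^N_{ext}$ the conservation $S^{(N)} + I^{(N)} + R^{(N)} = N+2$ with $I^{(N)}=0$ yields $N - R^{(N)} = S^{(N)} - 2$, so the claim is a direct consequence of Theorem~\ref{thm:stateS1}(i). For Case (iii), Theorem~\ref{thm:critical} allows us to work on $E^N_{ext}$, on which $R^{(N)} = R_{\tau_S}$. Decomposing
\[
\frac{R_{\tau_S}}{N^{1/\lambda}} \;=\; \bigl(e^{-\tau_S}R_{\tau_S}\bigr) \cdot \bigl(N^{-1}e^{\lambda\tau_S}\bigr)^{1/\lambda},
\]
the two factors jointly converge, by independence of $R$ and $S$, to $W_R$ and $W_S^{-1/\lambda}$ respectively; the limiting product, conditionally on $W_S$, is an exponential random variable of rate $W_S^{1/\lambda} = \Exp(1)^{1/\lambda}$, which is precisely the distribution $\Exp(\Exp(1)^{1/\lambda})$.

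For Case (ii), the atom $\tfrac12\delta_1$ comes from ${}^cE^N_{ext}$ via the Case (i) argument combined with Theorem~\ref{thm:stateS1}(ii). On $E^N_{ext}$, the Case (iii) computation specialized to $\lambda=1$ gives $R^{(N)}/N \to W_R/W_S$, whose law on $(0,\infty)$ has density $(1+x)^{-2}$. It remains to show that asymptotically $E^N_{ext}$ coincides with $\{W_R < W_S\}$, which yields the absolutely continuous part $(1+x)^{-2}\mathbbm{1}_{[0,1]}(x)\,dx$. This last point is the main obstacle: $E^N_{ext}$ is defined by the inequality $R_\tau+S_\tau \leq N+1$ for \emph{every} $\tau < \tau_S$, not only at the terminal time $\tau_S$. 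The plan is to show that on the relevant time scale $\tau \sim \log N$ the constraint is binding only near $\tau_S$, so the event is asymptotically equivalent to the terminal condition $R_{\tau_S} < N$, i.e.\ $W_R < W_S$. I expect to verify this via quantitative tail bounds on $R_\tau/e^\tau$ obtained from the Kendall/Poisson coupling of the Yule process, together with uniform estimates on $S_\tau$ away from its extinction time derived from extreme-value asymptotics for i.i.d.\ exponentials.
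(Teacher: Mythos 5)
Your framework matches the paper's: decouple $S$ and $R$ into an independent pure-death process of rate $\lambda$ and a Yule process of rate $1$ via an Athreya--Karlin-type embedding, and read the limits off the pair of independent exponential random variables $(W_R,W_S)$ supplied by Kendall's theory and extreme-value asymptotics. Cases (i), (iii), and the atomic part of (ii) are handled correctly and essentially as in the paper; your factorization $R_{\tau_S}/N^{1/\lambda}=\bigl(e^{-\tau_S}R_{\tau_S}\bigr)\bigl(N^{-1}e^{\lambda\tau_S}\bigr)^{1/\lambda}$ is a touch more transparent than the paper's characteristic-function computation, but the content is identical.

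The genuine gap is exactly where you flag it: proving that $E^N_{ext}$ coincides asymptotically with $\{W_R<W_S\}$. Knowing $\P(E^N_{ext})\to 1/2=\P(W_R<W_S)$ from Theorem~\ref{thm:critical} does not suffice; you need the symmetric difference to vanish. This is precisely the paper's Lemma~\ref{lem:tech1}: the event that $\sigma_N(N)<\rho(N)$ yet $\overline{\mathcal{S}}_N(t)<\mathcal{R}(t)-1$ for some $t\le\sigma_N(N)$ has vanishing probability, so $E^N_{ext}$ is asymptotically $\{\sigma_N(N)<\rho(N)\}$, which in turn converges a.s.\ to $\{W_R<W_S\}$. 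The proof of that lemma is a real piece of work: after transferring to Poisson clocks via Kendall's representation, the paper must control $\P\bigl(\exists\, 1\le i\le N:\ \mathcal{P}_{\mathcal{E}\,\overline{\tau}_i/(\overline{\tau}_N+\overline{\mathcal{E}}-\overline{\tau}_i)}\ge i\bigr)$ for $\lambda=1$, splitting into the early range $i\le\sqrt N$ (a moment bound on $\overline{\tau}_i$ combined with the Poisson tail $\P(\mathcal{P}_s\ge i)\le s^i/i!$) and the late range $i\ge\sqrt N$ (concentration of $\overline{\tau}_i/i$ plus a Chernoff bound for $\mathcal{P}_{(1-\eta^2)i}>i$). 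Your proposed ``quantitative tail bounds on $R_\tau/e^\tau$'' and ``uniform estimates on $S_\tau$'' point in roughly the right direction, but as written they are a plan, not a proof; the estimates that actually close the gap have not been supplied.
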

\noindent We make several comments on these results:

\begin{enumerate}
 \item[(i)]  When $ \lambda=1$, conditionally on ${}^{c}E^{N}_{ext}$, we have $R^{(N)}+S^{(N)}=N+2$. Hence, by remark (ii) following Theorem \ref {thm:stateS1}, conditionally on ${}^{c}E^{N}_{ext}$,  $R^{(N)}/N$ converges in probability to $1$. In particular, assertion (ii) of Theorem \ref {thm:stateR1} may be reformulated by saying that conditionally on $E^{N}_{ext}$, the law of $ R^{(N)}/N$ converges in distribution to the measure $ {2}{(1+x)^{-2}} \mathbbm {1}_{[0,1]}(x) dx$ as $ N \rightarrow \infty$.  
 \item[(ii)]It is interesting to note the heavy-tail behavior of the limiting random variable appearing in (iii); its moments are given by $ \Es {\textnormal{\textsf{Exp}}(\textnormal{\textsf{Exp}}(1)^{1/\lambda}) ^{s}}= { \Gamma(1+s)}/{  \Gamma(1-s/ \lambda)}$ for $-1 < s < \lambda$, and its tail by
$  \Pr {\textnormal{\textsf{Exp}}(\textnormal{\textsf{Exp}}(1)^{1/\lambda})> u }  \sim  { \Gamma( \lambda+1)} \cdot {u^{ -\lambda}}$ as $u \rightarrow \infty$.
 \item[(iii)] Note also the dissymmetry in the asymptotic behavior of the system for $ \lambda=1$: when the infection does not spread to the whole graph, only a few susceptible individuals remain and almost every individual is recovered, while when the infection spreads to the whole graph, the number of recovered individuals is a random proportion of the total number of individuals. 
\end{enumerate}

 \paragraph{The number of remaining infected individuals: Outbreak sizes.}   Denote by $I^{(N)}$ the number of infected individuals once the chase-escape process has reached an absorbing state. We are finally interested in the asymptotic behavior of $I^{(N)}$.

 \begin{thm}\label{thm:stateI1} The following assertions hold.\begin{enumerate}
\item[(i)] If $ \lambda \in (0,1)$, then $I^{(N)}$ converges in probability to $0$ as $N \rightarrow \infty$. 
\item[(ii)] If $ \lambda=1$, then 
\begin{equation}
\label{eq:I1}\frac{I^{(N)}}{N}  \quad\mathop{\longrightarrow}^{(d)}_{N \rightarrow \infty} \quad   \frac{1}{(2-x)^{2}} \mathbbm {1}_{[0,1]}(x) dx.
\end{equation}
\item[(iii)] If $ \lambda>1$, 
\begin{equation}
\label{eq:I2}\frac{N-I^{(N)}}{N^{1/ \lambda}}  \quad\mathop{\longrightarrow}^{(d)}_{N \rightarrow \infty} \quad  \textnormal{\textsf{Exp}}(\textnormal{\textsf{Exp}}(1)^{1/\lambda}).
\end{equation}
 
\end {enumerate}
\end {thm}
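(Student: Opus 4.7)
The plan is to exploit the conservation law
\[
S^{(N)} + I^{(N)} + R^{(N)} = N+2,
\]
valid throughout the dynamics (since every vertex is always in exactly one of the three states), so that at the absorption time $I^{(N)} = N+2 - S^{(N)} - R^{(N)}$. This identity reduces Theorem \ref{thm:stateI1} to a direct combination of Theorems \ref{thm:critical}, \ref{thm:stateS1}, and \ref{thm:stateR1} via Slutsky's lemma; no new probabilistic input is required.

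For assertion (i), $\lambda \in (0,1)$, I would observe that on the event ${}^c E^N_{ext}$ (the infection dies out), the stopping rule forces $I^{(N)} = 0$, hence $\Pr{I^{(N)} > 0} \leq \Pr{E^N_{ext}} \to 0$ by Theorem \ref{thm:critical}, giving convergence in probability. For assertion (iii), $\lambda > 1$, I would write
\[
\frac{N - I^{(N)}}{N^{1/\lambda}} \;=\; \frac{S^{(N)}}{N^{1/\lambda}} + \frac{R^{(N)}}{N^{1/\lambda}} - \frac{2}{N^{1/\lambda}}.
\]
Theorem \ref{thm:stateS1}(iii) gives $S^{(N)} \to 0$ in probability and thus $S^{(N)}/N^{1/\lambda} \to 0$ in probability (with $N^{1/\lambda} \to \infty$), while Theorem \ref{thm:stateR1}(iii) supplies the distributional limit of $R^{(N)}/N^{1/\lambda}$. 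Slutsky concludes.

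For assertion (ii), $\lambda = 1$, I would write $I^{(N)}/N = 1 + 2/N - S^{(N)}/N - R^{(N)}/N$. By Theorem \ref{thm:stateS1}(ii), $S^{(N)}$ converges in distribution to a finite (geometric) random variable, hence $S^{(N)}/N \to 0$ in probability; together with Theorem \ref{thm:stateR1}(ii), Slutsky yields $I^{(N)}/N \to 1 - Y$ in distribution, where $Y$ has law $\tfrac{1}{2}\delta_1 + \frac{1}{(1+x)^2}\mathbbm{1}_{[0,1]}(x)\,dx$. The Dirac mass at $1$ of $Y$ becomes a Dirac mass at $0$ of $1-Y$ (consistent with $I^{(N)} = 0$ on ${}^c E^N_{ext}$, an event of asymptotic probability $1/2$), and the change of variable $z = 1-x$ turns the density $\frac{1}{(1+x)^2}\mathbbm{1}_{[0,1]}(x)$ into $\frac{1}{(2-z)^2}\mathbbm{1}_{[0,1]}(z)$, yielding the absolutely continuous part of the limit measure. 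In short, there is essentially no obstacle in this theorem: once Theorems \ref{thm:critical}, \ref{thm:stateS1}, and \ref{thm:stateR1} are established, Theorem \ref{thm:stateI1} follows immediately from the linear conservation identity and an elementary change of variable, all the genuine work having been done beforehand via the Athreya--Karlin embedding into Yule processes.
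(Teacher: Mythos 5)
Your proposal is correct and follows essentially the same route as the paper: assertion (i) uses $I^{(N)}=0$ on ${}^{c}E^{N}_{ext}$ together with Theorem \ref{thm:critical}, and assertions (ii) and (iii) are obtained from Theorems \ref{thm:stateS1} and \ref{thm:stateR1} via the identity $I^{(N)}=N+2-S^{(N)}-R^{(N)}$ and Slutsky's lemma (the paper phrases (iii) by restricting to $E^{N}_{ext}$, whose probability tends to $1$, rather than noting $S^{(N)}/N^{1/\lambda}\to 0$ in probability, but this is the same argument). Your change of variables $z=1-x$ for the absolutely continuous part of the limit in (ii) is the computation the paper leaves implicit, and your derivation also makes the Dirac mass $\tfrac12\delta_{0}$ explicit in the limit of $I^{(N)}/N$, which is present in the limit law but omitted in the statement of~\eqref{eq:I1}.
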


\paragraph {Outbreak sizes.} In the language of epidemiology, an outbreak is defined to be small, or self-limiting, if the total average number of infected individuals at the absorption time does not scale with $N$, and an outbreak is pandemic if the total average number of infected individuals at the absorption time is a positive fraction of the population. In order to determine the nature of the outbreak sizes in the chase-escape process, we investigate if the convergences appearing in Theorems \ref {thm:stateS1}, \ref {thm:stateR1} and \ref {thm:stateI1} hold in $ \L^{p}$. Our results (which we state below in Sec.~\ref {sec:LP} in order to shorten the Introduction) establish in particular that the outbreak sizes are small if $ \lambda  \in (0,1/2]$ (they even tend to $0$ for $  \lambda \in (0,1/2)$ and tend to $1$ for $ \lambda=1/2$), exhibit a power-law behavior for $ 1/2 < \lambda <1$, and are pandemic for $ \lambda \geq 1$ (however, for $ \lambda=1$,  the mean fraction of non-infected individuals tends to a positive number, while for $ \lambda>1$, the mean fraction of not-infected individuals  tends to $0$).

 This contrasts heavily with the outbreak sizes in the SIR model on the complete graph, where outbreaks are small in the subcritical case, pandemic in the supercritical case and exhibit a power-law behavior in the critical case (see e.g. \cite {BK04} for a study of the average outbreak sizes in the SIR model).

\paragraph{Phase transitions.} We have seen in Theorem \ref{thm:critical} that $ \lambda=1$ is the critical value for the extinction of susceptible individuals in the large population limit. Theorems \ref {thm:stateS2}, \ref {thm:stateR2} and \ref {thm:stateI2} (stated in Sec.~\ref {sec:LP} below) also show that the chase-escape process on large complete graphs exhibits  two additional phase transitions  in the subcritical phase $ \lambda \in (0,1)$. The first one is at $ \lambda =1/2$. Below this value, the expected final number of infected individuals $ \Es{I^{(N)}}$ tends to $0$ as the size of the graph grows. At the value $ \lambda=1/2$, $ \Es{I^{(N)}}$ tends to $1$ in the large population limit.  For $ \lambda \in (1/2,1)$, $ \Es{I^{(N)}}$ grows as a power of $N$ with a positive exponent less than $1$.

Note also that at $ \lambda=1$ the behavior of the growth of $ \Es{I^{(N)}}$ also changes: for $ \lambda=1$, $ \Es{I^{(N)}}/N$ converges to a positive constant which is less than $1$, while for $ \lambda>1$, $ \Es{I^{(N)}}$ is asymptotic to $N$.

Another phase transition in the subcritical phase $ \lambda \in (0,1)$ occurs at $ \lambda=(1- \sqrt{5})/2$ when looking at the second order of the asymptotic behavior of $ \Es{R^{(N)}}$: for $0< \lambda \leq (\sqrt {5}-1)/2$, $N- \Es{R^{(N)}}$ is of order $ N^{1- \lambda}$, while for $ (\sqrt {5}-1)/2 \leq  \lambda <1$, $N- \Es{R^{(N)}}$ is of order $ N^{2- 1/\lambda}$. This is explained by the fact that for $0 < \lambda <  (\sqrt {5}-1)/2$ the main contribution to $N- \Es{R^{(N)}}$ comes from the remaining susceptible individuals, while for $(\sqrt {5}-1)/2 <  \lambda <1$ the main contribution to $N- \Es{R^{(N)}}$ comes from the remaining infected individuals. For $ \lambda=(\sqrt {5}-1)/2$, these contributions are of the same order.

\paragraph{Related models.} Let us mention several other similar but different models which have appeared in the literature. If no recovered individuals are present in the beginning, this dynamics is the so-called Richardson's model \cite {Ric73}. Also, H\"aggstr\"om \& Pemantle  \cite {HP98} and Kordzakhia  \& Lalley \cite {KL05} have studied an  extension of Richardson's model with two species, in which infected and recovered individuals may only spread to susceptible individuals.

\paragraph{Techniques.}  The main idea is to couple the dynamics with two independent Yule processes by using an Athreya--Karlin embedding \cite{AK68}. Recall that in a Yule process of parameter $ \lambda$, each individual lives a random independent time distributed as an exponential random variable of parameter $ \lambda$ and produces two offspring at its death. Let $ (\mathcal{Y}_{t})_{t \geq 0}$ be a Yule process of parameter $ \lambda$ starting with one individual and let $ (\mathcal{Z}_{t})_{t \geq 0}$ be a Yule process of parameter $1$ starting with one individual.  Let $\overline{\mathcal{Y}}$ be the process $ \mathcal{Y}$ time-reversed at its $N$-th jump. More precisely, if $ t_{N}$ denotes the $N$-th jump of $ \mathcal{Y}$, then $ \overline{\mathcal{Y}}_{t}= \mathcal{Y}_{(t_{N}-t)-}$ for $0 \leq t \leq t_N$. Then, informally, the chase-escape process can be constructed in such a way that the discontinuities of $ \overline{\mathcal{Y}}$ are the times when a susceptible vertex is infected and the discontinuities of $ \mathcal{Z}$ are the times when a recovered vertex spreads to an infected one. In particular, $ \overline{\mathcal{Y}}_{t}$ represents the number of susceptible vertices at time $t$ and $ \mathcal{Z}_{t}$ represents the number of recovered vertices at time $t$. The absorption time is then the first time $t$ when either $ \overline{\mathcal{Y}}_{t}=0$ or $ \overline{\mathcal{Y}}_{t}+ \mathcal{Z}_{t}=N+2$.   See Theorem \ref {thm:coupling} for a precise statement. A useful feature of this coupling is that the same process $ \overline{\mathcal{Y}}$ is used for different values of $N$. 

In particular, the dynamics can be viewed as a generalized non-conservative P\'olya urn process with two urns (see e.g.  \cite {Jan04,LP07} for a study of  P\'olya urns using branching processes). The first urn has $N$ balls in the beginning (and represents susceptible individuals) and the second one has $1$ ball in the beginning (and represents recovered individuals). Balls are selected uniformly at random, with an activity (or weight) $ \lambda$ for balls of the first bin, and activity $1$ for balls of the second bin. When a ball from the first bin is selected, it is removed. However, when a ball from the second bin is selected, it is replaced in the second bin, and one additional ball is added to the second bin. The process is stopped at the first time when either the first bin is empty or the two bins contain $N+2$ balls together. Due to this very particular stopping time, we may not apply known results on P\'olya urns in our case.

In order to analyze the chase-escape process with this coupling, we use Kendall's representation of Yule processes, which states that if $ (\mathcal{P}_{t})_{t \geq 0}$ is a Poisson process of parameter $ \lambda$ and $ \mathcal{E}$ an independent exponential random variable of parameter $1$, then $( \mathcal{P}_{ \mathcal{E}(e^{t}-1) } +1)_{t \geq 0}$ is a Yule process of parameter $ \lambda$ with terminal value $ \mathcal{E}$ (see Section \ref {sec:Yule} for details and the definition of the terminal value).  This device allows us to transfer calculations on Yule processes to more tractable calculations on Poisson processes.

Finally, we mention that since $S^{(N)}+I^{(N)}+R^{(N)}=N+2$, it is sufficient to establish one of the two Theorems \ref{thm:stateS1},  \ref{thm:stateR1}  and  \ref{thm:stateI1} \ref{thm:stateI2} to get the other one.

\paragraph{Acknowledgments. } I am deeply indebted to Itai Benjamini for suggesting me to study this problem as well as to the Weizmann Institute of Science for hospitality, where this work begun. I would also like to thank Pascal Maillard for stimulating discussions, and an anonymous referee for a very careful reading as well as for many useful remarks that improved the paper.

\section{Decoupling in continuous time and Yule processes}

\subsection{A pure birth and a pure death chain}

Recall from the Introduction that the chase-escape process is run on the complete graph with $N+2$ vertices, where $N \geq 1$ is an integer, starting with one infected individual, one recovered individual and the other $N$ individuals being susceptible. The process is stopped at the first time $T$ when either no infected or no susceptible individuals remain. For every $t \geq 0$, let $S(t)$, $I(t)$ and resp. $R(t)$ be the number of susceptible, infected and resp. recovered individuals at time $t$ (we forget the dependence in $N$ to simplify notation). In particular, note that
$$T= \inf \{  t \geq 0; \quad S(t)=0 \textrm { or } S(t)+R(t)=N+2\}$$
and notice that $S(t)+I(t)+R(t)=N+2$ for $0 \leq t \leq T$. 
Let $(T_{n})_{1 \leq n \leq  \zeta }$ be the increasing sequence of discontinuity times of the process $(S(t),R(t))_{0 \leq t \leq T}$ and set $T_{0}=0$ by convention.

We now introduce a two-type Markov branching process $( \mathcal{S}_{N}(t), \mathcal{R}(t))_{t \geq 0}$ such that $( \mathcal{S}_{N}(t))_{t \geq 0}$ is a pure death chain and $( \mathcal{R}(t))_{t \geq 0}$ is an independent pure birth chain. More precisely, the chain $ \mathcal{S}_{N}$ starts with $N$ individuals (here we keep the subscript $N$ to emphasize the dependence in $N$). Each individual dies after an exponential time of parameter $ \lambda$, all independently. Finally, the independent chain $ \mathcal{R}$ starts with one individual and each individual gives birth after an exponential time of parameter $ 1$, all independently. Set
$$ \mathcal{T}= \inf \{ t \geq 0;  \mathcal{S}_{N}(t)=0 \textrm { or } \mathcal{R}(t) + \mathcal{S}_{N}(t)=N+2   \}.$$
and let $(\mathcal{T}_{i})_{1 \leq i \leq  \xi}$  be the increasing sequence of discontinuity times of the process $( \mathcal{S}_{N}(t), \mathcal{R}(t))_{ 0 \leq t \leq \mathcal{T} }$ and set $ \mathcal{T}_{0}=0$ by convention.

\begin {thmbis}\label {thm:coupling} For every integer $N \geq 1$, we have:
$$ (S(T_i), R(T_i))_{0 \leq i \leq  \zeta}  \quad\mathop{=}^{(d)} \quad ( \mathcal{S}_{N} ( \mathcal{T} _{i}),  \mathcal{R} ( \mathcal{T} _{i}))_{0 \leq i \leq  \xi}.$$
\end {thmbis}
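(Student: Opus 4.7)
The strategy is to reduce the claimed distributional identity to a check on the embedded discrete-time jump chains. Since both processes are tracked only at their successive discontinuity times, and both are continuous-time Markov chains on the finite state space $\{0,\dots,N\}\times\{1,\dots,N+1\}$, starting from $(N,1)$ and absorbed in a set to be compared, it suffices to show that the two embedded jump chains have the same one-step transitions and the same absorbing set. Because each coordinate is monotone (the first non-increasing, the second non-decreasing), every admissible trajectory is a simple lattice path and its probability under the embedded chain is just the product of its one-step jump probabilities.

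First I would compute these probabilities for the chase-escape chain $(S(t),R(t))$ at a non-absorbing state, i.e.\ with $S\geq 1$ and $I:=N+2-S-R\geq 1$. On the complete graph the total infection rate is $\lambda S I$ and the total recovery rate is $I R$; the crucial observation is the cancellation of the common factor $I$, giving the jump probabilities $\lambda S/(\lambda S+R)$ and $R/(\lambda S+R)$ respectively for the transitions $(S,R)\mapsto(S-1,R)$ and $(S,R)\mapsto(S,R+1)$. For the decoupled chain $(\mathcal{S}_N,\mathcal{R})$, the total death rate is $\lambda \mathcal{S}_N$ and the total birth rate is $\mathcal{R}$ at every non-absorbing state, and since the two coordinates are independent, the probability of a death (resp.\ of a birth) being the next event is exactly $\lambda \mathcal{S}_N/(\lambda \mathcal{S}_N+\mathcal{R})$ (resp.\ $\mathcal{R}/(\lambda \mathcal{S}_N+\mathcal{R})$). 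The absorbing sets also coincide: the chase-escape is absorbed the first time $S=0$ or $I=0$, and $I=0$ is exactly $S+R=N+2$, which matches the definition of $\mathcal{T}$.

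The substantive content is really just the cancellation of the factor $I(t)$: informally, the chase-escape process on the complete graph is a random time change of the decoupled birth/death pair via the clock $\int_0^tI(s)\,ds$, and this time change is invisible at the level of the embedded jump chain. No real difficulty is expected beyond bookkeeping, most notably checking that both chains remain strictly inside their non-absorbing regions until absorption (so that the calculations of the non-absorbing transition probabilities apply at every interior step), which is immediate from the monotonicity of the two coordinates and the fact that $I\geq 1$ resp.\ $\mathcal{S}_N+\mathcal{R}\leq N+1$ at every state visited strictly before $T$ resp.\ $\mathcal{T}$.
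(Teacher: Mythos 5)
Your proposal is correct and matches the paper's proof in essence: both compute the one-step transition probabilities of the embedded jump chain, observe that the factor $I(t)$ cancels from numerator and denominator of the chase-escape jump probabilities to leave $\lambda S/(\lambda S+R)$ and $R/(\lambda S+R)$, and note that the decoupled pair has the same jump probabilities and same initial state $(N,1)$. Your additional remarks about the random time change via $\int_0^t I(s)\,ds$ and the monotone lattice-path picture are nice intuition but are not needed beyond the transition-probability computation the paper itself gives.
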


\begin {proof} Fix $i \geq 0$. Since at a fixed time $t \in (0, T)$ the total rate of infection of the susceptible individuals by the infected ones is $ \lambda S(t) I(t)$ and since the rate at which recovered individuals spread to infected ones is $ R(t) I(t)$, we get \begin{equation}
\label{eq:transition1}\Pr {S(T_{i+1})=S(T_i)-1 \, \big| \, T_{i}<T, S(T_i), R(T_i)}= \frac{\lambda \cdot S(T_i) I(T_i)}{\lambda \cdot S(T_i)I(T_i) + R(T_i) I(T_i)}=\frac{\lambda \cdot S(T_i)}{\lambda \cdot S(T_i) + R(T_i)}
\end{equation}
and similarly
\begin{equation}
\label{eq:t2}\Pr {R(T_{i+1})=R(T_i)+1 \, \big| \,T_{i}<T_{0}, S(T_i), R(T_i)}=\frac{R(T_i)}{\lambda \cdot S(T_i) + R(T_i)}.
\end{equation}
In addition, by construction note that 
\begin{eqnarray*}
\Pr { \mathcal{S}_{N} ( \mathcal{T} _{i+1})=\mathcal{S}_{N}(\mathcal{T}_{i})-1 \, \big| \, \mathcal{T}_{i}<\mathcal{T}, \mathcal{S}_{N}(\mathcal{T}_{i}), \mathcal{R} _{\mathcal{T}_{i}}} &=& \frac{\lambda \cdot \mathcal{S}_{N}(\mathcal{T}_{i})}{\lambda \cdot \mathcal{S}_{N}(\mathcal{T}_{i}) + \mathcal{R}_{\mathcal{T}_{i}}} \\
&=&1- \Pr {\mathcal{R}(\mathcal{T}_{i+1})=\mathcal{R}_{\mathcal{T}_{i}}+1 \, \big| \,\mathcal{T}_{i}<\mathcal{T}_{0}, \mathcal{S}_{N}(\mathcal{T}_{i}), \mathcal{R}_{\mathcal{T}_{i}}}.
\end{eqnarray*}
Since $(S(0),R(0))= ( \mathcal{S}_{N}(0), \mathcal{R}(0))=(N,1)$, Theorem \ref {thm:coupling} immediately follows.\end {proof}

Observe that it is the particular form of the transition probabilities \eqref {eq:transition1} and \eqref {eq:t2} (obtained by the disappearance of $I(T_i)$ in \eqref {eq:transition1}) that has allowed to decouple the infections and the spreading of recovered individuals.

\bigskip

Thanks to Theorem \ref {thm:coupling}, we may and will  replace the chase-escape process $(S(T_i), R(T_i))_{0 \leq i \leq  \zeta}$ by the process  $( \mathcal{S}_{N}( \mathcal{T} _{i}),  \mathcal{R} ( \mathcal{T} _{i}))_{0 \leq i \leq  \xi}$. In particular, with a slight abuse, we shall say that the jump times of $ \mathcal{S}_{N}$ are the times when a susceptible individual is infected and that the jump times of $ \mathcal{R}$ are the times when a recovered individual spreads to an infected one. 
\begin{center}\emph{We shall also say that $ \mathcal{S}_{N}(t)$ (resp. $\mathcal{R}(t)$) is the number of susceptible (resp. recovered) individuals at time $t$.}\end{center}

\bigskip

Before stating several useful features of this coupling, we need to introduce some notation. Set ${\sigma}_{N}(0)=0$ and for every $1 \leq i \leq N$, let ${\sigma}_{N}(i)$ be time of the $i$-th jump time of $ \mathcal{S}_{N}$. Note that the random variables $({\sigma}_{N}(i+1)-{\sigma}_{N}(i), 0 \leq i \leq N-1)$ are independent and ${\sigma}_{N}(i+1)-{\sigma}_{N}(i)$ is distributed according to $\textnormal{\textsf{Exp}}( \lambda (N-i))$ for $0 \leq i \leq N-1$.  Observe that $ \mathcal{S}_{N}(t)=0$ for $t \geq {\sigma}_{N}(N)$.
Set also $\rho(0)=0$ and for every $i \geq 1$, let $\rho(i)$ be time of the $i$-th jump of $ \mathcal{R}$ , and note that the random variables $(\rho(i)-\rho(i-1), i \geq 1)$ are independent and that $\rho(i)-\rho(i-1)$ has the same distribution as $\textnormal{\textsf{Exp}}(i)$. A moment's thought shows that
$$ \mathcal{T}={\sigma}_{N}(N) \wedge  \rho(\min\{i \geq 1; \rho(i)< {\sigma}_{N}(i)\}).$$ 
Indeed, if $1 \leq  i<N$ and $\rho(i)<{\sigma}_{N}(i)$, since $i$ susceptible vertices have been infected for the first time at time ${\sigma}_{N}(i)$, while  $i$  infected vertices have recovered at time $\rho(i)$, this means that there are no more infected individuals at time $\rho(i)$.  Finally, let $ \overline{ \mathcal{{S}}}_{N}(t)=N- \mathcal{S}_{N}(t)$ be the number of jumps of $ \mathcal{S}_{N}$ on the interval $[0,t]$, for $0 \leq t \leq {\sigma}_{N}(N)$, and note that $ \mathcal{T}$ is the smallest of the two quantities ${\sigma}_{N}(N)$ and $\inf \{ 0 \leq t \leq {\sigma}_{N}(N) ; \overline{\mathcal{S}}_{N}(t) <  \mathcal{R}(t)-1 \} $ (with the convention $ \inf \emptyset= \infty$). See Fig.~\ref{fig:1} for an illustration.

\begin{figure*}[h]
\begin{center}
\includegraphics[scale=1]{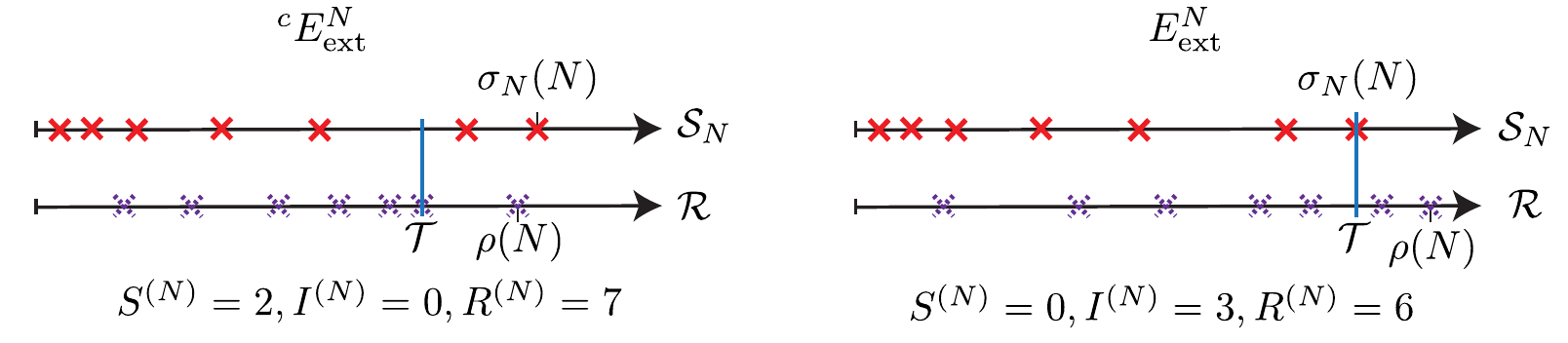}
\caption{\label{fig:1}Illustration of the two events $E^{N}_{ext}$ and ${}^c E^{N}_{ext}$ for $N=7$. The  bold crosses represent the jump times of $ \mathcal{S}_{N}$ and the  dashed crosses represent the jump times of $ \mathcal{R}$.}
\end{center}
\end{figure*}

Recall that $ E^{N}_{ext}$ is the event that there exists a (random) time when no susceptible individuals remain and that ${}^c E^{N}_{ext}$ is the complementary event. We list some  important features of our coupling which we will  extensively use in the sequel:

\begin {prop}\label {prop:ext}~
\begin{enumerate}
\item[(i)] The events $E^{N}_{ext}$ and $ \{ \overline{\mathcal{S}}_{N}(t) \geq   \mathcal{R}(t)-1 \textrm { for every } 0 \leq t \leq {\sigma}_{N}(N)\} $ are the same. In particular,
$$ \Pr{E^{N}_{ext}}= \Pr { \overline{\mathcal{S}}_{N}(t) \geq   \mathcal{R}(t)-1 \textrm { for every } 0 \leq t \leq {\sigma}_{N}(N)}.$$
\item[(ii)]On the event $E^{N}_{ext}$, the last susceptible individual is infected at time ${\sigma}_{N}(N)$. In addition, on the event $E^{N}_{ext}$,  we have $S^{(N)}=0$, $I^{(N)}=N+2- \mathcal{R}({\sigma}_{N}(N))$ and $R^{(N)}= \mathcal{R}({\sigma}_{N}(N))$.
\item[(iii)] On the event ${}^{c}E^{N}_{ext}$, the last infected individual recovers at time $ \mathcal{T}=\inf\{ t \geq 0;  \overline{\mathcal{S}}_{N}(t) <   \mathcal{R}(t)-1\}$. In addition, on the event ${}^{c}E^{N}_{ext}$, we have $S^{(N)}= \mathcal{S}_{N}( \mathcal{T})=N+2-\mathcal{R}( \mathcal{T})$, $I^{(N)}=0$, $R^{(N)}=\mathcal{R}( \mathcal{T})=N+2-\mathcal{S}( \mathcal{T})$.
 In particular, if  we have $ \mathcal{T}>t$, then $ \mathcal{S} _{N}(\rho(N)) \leq  S^{(N)} \leq  \mathcal{S} _{N}(t) $.
 \item[(iv)] We always have $R^{(N)} \leq  \mathcal{R}({\sigma}_{N}(N))$.
 \end{enumerate}
\end {prop}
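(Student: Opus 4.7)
The plan is to reduce every assertion to a single bookkeeping identity. Writing $I(t)$ for the number of infected individuals at time $t$ in the coupling of Theorem~\ref{thm:coupling}, the conservation law $\mathcal{S}_N(t) + I(t) + \mathcal{R}(t) = N + 2$ on $[0, \mathcal{T}]$ (with $\mathcal{S}_N(0)=N$, $\mathcal{R}(0)=I(0)=1$) can equivalently be written as
$$ I(t) \;=\; \overline{\mathcal{S}}_N(t) - \mathcal{R}(t) + 2 \qquad (0 \leq t \leq \mathcal{T}),$$
the $\overline{\mathcal{S}}_N$ term accounting for infections (each jump of $\mathcal{S}_N$ creating a new infected) and the $\mathcal{R}$ term for recoveries (each jump of $\mathcal{R}$ consuming one infected). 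Once this identity is in hand, each assertion follows by inspection.

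Assertion (i) is then immediate: by definition the infection spreads to the whole graph iff the process reaches $\mathcal{S}_N = 0$ before $I$ hits zero, i.e.\ iff $I(t) \geq 1$ throughout $[0, \sigma_N(N)]$, and by the identity this is exactly $\overline{\mathcal{S}}_N(t) \geq \mathcal{R}(t) - 1$ on that interval. For (ii), on $E^N_{ext}$ the absorption time coincides with $\sigma_N(N)$, so all $N$ initial susceptibles have been infected, yielding $S^{(N)} = 0$, $R^{(N)} = \mathcal{R}(\sigma_N(N))$, and $I^{(N)} = N+2-R^{(N)}$ from conservation.

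For (iii), on ${}^c E^N_{ext}$ the absorption time $\mathcal{T}$ is the first time $I$ hits $0$, which by the identity is the first $t$ such that $\overline{\mathcal{S}}_N(t) < \mathcal{R}(t) - 1$; since $\overline{\mathcal{S}}_N$ and $\mathcal{R}$ are both non-decreasing, this instant must be a jump time of $\mathcal{R}$. Thus $I^{(N)} = 0$, $S^{(N)} = \mathcal{S}_N(\mathcal{T})$ and $R^{(N)} = \mathcal{R}(\mathcal{T}) = N+2-S^{(N)}$. For the sandwich $\mathcal{S}_N(\rho(N)) \leq S^{(N)} \leq \mathcal{S}_N(t)$ when $\mathcal{T} > t$, the upper bound is simply the monotonicity of $\mathcal{S}_N$. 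The lower bound reduces to showing $\mathcal{T} \leq \rho(N)$: on ${}^c E^N_{ext}$ we have $\mathcal{T} < \sigma_N(N)$, hence $\overline{\mathcal{S}}_N(\mathcal{T}) \leq N-1$, so $\mathcal{R}(\mathcal{T}) = \overline{\mathcal{S}}_N(\mathcal{T}) + 2 \leq N+1$, meaning $\mathcal{R}$ has made at most $N$ jumps by time $\mathcal{T}$.

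Finally, (iv) is transparent: there is equality on $E^N_{ext}$, while on ${}^c E^N_{ext}$ we have $\mathcal{T} \leq \sigma_N(N)$ and so $R^{(N)} = \mathcal{R}(\mathcal{T}) \leq \mathcal{R}(\sigma_N(N))$ by monotonicity of $\mathcal{R}$. No step here is really an obstacle: the proposition is pure bookkeeping on top of Theorem~\ref{thm:coupling}, and the only delicate point is keeping track of the $\pm 1$'s arising from the initial configuration with one infected and one recovered individual.
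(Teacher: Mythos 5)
Your proof is correct and follows the same route the paper implicitly uses: the paper states Proposition \ref{prop:ext} without a formal proof, relying on the jump-counting discussion preceding it, and the conservation identity $I(t) = \overline{\mathcal{S}}_N(t) - \mathcal{R}(t) + 2$ that you isolate is exactly the bookkeeping fact underlying that discussion. Your justification of the lower bound $\mathcal{S}_N(\rho(N)) \leq S^{(N)}$ on ${}^c E^{N}_{ext}$ (via $\mathcal{T}$ being a jump time of $\mathcal{R}$ with $\mathcal{R}(\mathcal{T}) \leq N+1$, hence $\mathcal{T} \leq \rho(N)$) correctly spells out a step the paper leaves to the reader.
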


In the following section, we introduce some background on Yule processes, which we will see to be very closely related to both the evolution of $\mathcal{S}_{N}$ and of $ \mathcal{R}$.

\subsection{Background on Yule processes}
\label {sec:Yule}

A Yule process of parameter $ \lambda>0$ describes the growth of a population in which each individual dies after an exponential time of parameter $ \lambda$ by giving birth to two offspring,  independently, starting from one individual.  In this section, $ (Y_{t})_{t \geq 0}$ is a Yule process of fixed parameter $ \lambda>0$.
 We now state some useful well-known results (see e.g. \cite[Section 2.5]{Nor98} and \cite[Theorem 1 in Section III.7]{AN72} for proofs).
 
\begin{prop}\label{prop:Yule}~
\begin{enumerate}
\item[(i)] Set $J_{0}=0$ and let $(J_{i})_{i \geq 1}$ be the increasing birth times of $ Y$. The random variables $(J_{i}-J_{i-1}; i \geq 1)$ are independent and $J_{i}-J_{i-1}$ has the same distribution as $\textnormal{\textsf{Exp}}( \lambda i)$.
 \item[(ii)] For every $t \geq 0$ and $k \geq 1$ we have $ \P(Y_{t}=k)= e^{- \lambda t} (1- e^{- \lambda t})^{k-1}$.
 \item[(iii)] The following convergence holds almost surely:
 $$ e^{- \lambda t} Y_{t}  \quad\mathop{\longrightarrow}^{a.s.}_{t \rightarrow \infty} \quad \mathcal{E},$$
 where $ \mathcal{E}$ is an exponential random variable of parameter one  which we call the terminal value of $Y$.
  \end{enumerate}
\end {prop}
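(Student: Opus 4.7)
For \emph{(i)}, I would invoke the Markov and branching structure of the Yule process. Just after the $(i-1)$-th birth, there are $i$ individuals, each of which independently waits an $\Exp(\lambda)$ time before splitting into two. By the memoryless property, the time $J_i - J_{i-1}$ until the next birth is the minimum of these $i$ independent $\Exp(\lambda)$ random variables, hence distributed as $\Exp(\lambda i)$, and is independent of $J_1, \dots, J_{i-1}$ by the strong Markov property at time $J_{i-1}$.

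For \emph{(ii)}, I would write down the forward Kolmogorov equations for $p_k(t) := \P(Y_t = k)$, namely
\begin{equation*}
p_k'(t) \;=\; \lambda(k-1)\, p_{k-1}(t) \;-\; \lambda k\, p_k(t), \qquad p_k(0) = \mathbbm{1}_{\{k=1\}},
\end{equation*}
and simply check that the geometric ansatz $p_k(t) = e^{-\lambda t}(1-e^{-\lambda t})^{k-1}$ satisfies this system, using induction on $k$ to conclude by uniqueness. An alternative route uses \emph{(i)}: the event $\{Y_t = k\}$ equals $\{J_{k-1} \leq t < J_k\}$, and one can compute this probability directly from the joint law of the partial sums of independent exponentials with parameters $\lambda, 2\lambda, \dots, k\lambda$.

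For \emph{(iii)}, the branching property gives $\Es{Y_{t+s} \mid \sigma(Y_u, u \leq t)} = Y_t\, e^{\lambda s}$, so $M_t := e^{-\lambda t} Y_t$ is a nonnegative martingale (a short calculation from \emph{(ii)} confirms $\Es{Y_t} = e^{\lambda t}$). By the martingale convergence theorem, $M_t \to \mathcal E$ almost surely for some nonnegative random variable $\mathcal E$. To identify its distribution, I would compute the Laplace transform using \emph{(ii)}: setting $u := e^{-\lambda t}$, for $\theta \geq 0$,
\begin{equation*}
\Es{e^{-\theta M_t}} \;=\; \sum_{k \geq 1} e^{-\theta u k}\, u\, (1-u)^{k-1} \;=\; \frac{u\, e^{-\theta u}}{1 - (1-u)\, e^{-\theta u}}.
\end{equation*}
A first-order Taylor expansion as $u \to 0$ gives the limit $1/(1+\theta)$, which is the Laplace transform of an $\Exp(1)$ random variable. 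Since $e^{-\theta M_t} \to e^{-\theta \mathcal E}$ almost surely and is bounded by $1$, dominated convergence then identifies $\mathcal E \sim \Exp(1)$.

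The only mildly delicate point is \emph{(iii)}: the martingale convergence is immediate, but pinning down the exponential limit law requires the explicit geometric formula from \emph{(ii)}. Everything else is standard continuous-time Markov chain bookkeeping, so I do not anticipate any substantive obstacle.
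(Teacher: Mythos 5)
The paper gives no proof here: Proposition~\ref{prop:Yule} is stated as a collection of well-known facts with pointers to \cite[Section~2.5]{Nor98} and \cite[Section~III.7]{AN72}. Your argument is correct and is essentially the standard textbook derivation those references contain — strong Markov property plus the minimum of $i$ independent $\Exp(\lambda)$ variables for (i), forward Kolmogorov equations (or the direct computation from (i)) for (ii), and the nonnegative martingale $e^{-\lambda t}Y_t$ together with a Laplace-transform identification of the almost-sure limit for (iii).
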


We write $f(t-)$ for the left limit at $t \in \R$ of a function $f : \R \rightarrow \R$, when it exists. Keeping the notation introduced in the previous section, assertion (i) entails that $(\mathcal{R}(t))_{t \geq 0}$ is a Yule process of parameter $1$ and that $(\mathcal{S}_N( ({\sigma}_{N}(N)-t)-))_{0 \leq t <  {\sigma}_{N}}$ is a Yule process of parameter $ \lambda$ stopped just before its $N$-th jump time. This explains why Yule processes play an important role in the study of our model.

\medskip

We will also need the description of the Yule process conditioned on its terminal value, due to Kendall (see e.g. \cite[Section 11 in Chapter III, Theorem 2]{AN72} or \cite[Theorem 1]{Ken66}), which will allow us to couple Yule processes with Poisson processes.

\begin{prop}\label{prop:cond}Let $ \mathcal{E}$ be the terminal value of $Y$. Then, conditionally on $ \mathcal{E}$, the process $(Y_{\frac{1}{ \lambda}\log(1+ \frac{t}{ \mathcal{E}})}-1;t \geq 0)$ is a Poisson process on $ \R_{+}$ of intensity $1$ starting from $0$. More precisely, for every $0<t_{1}< \cdots < t_{k}< \infty$ and integers $n_{i} \geq 1$, $1 \leq i \leq k$, and for every Borel subset $B \subset [0, \infty]$, 
$$ \Pr {Y _{\frac{1}{ \lambda}\log (1+ \frac{t_{i}}{ \mathcal{E}} )} =n_{i} \textrm { for every }  1 \leq i \leq k , \mathcal{E} \in B }= \Pr { \mathcal{E} \in B } \Pr { \mathcal{P}_{t_{i}}=n_{i}-1 \textrm { for every }  1 \leq i \leq k },$$
where $(\mathcal{P}_{t})_{t \geq 0}$ is a Poisson process with parameter $1$ and such that $ \mathcal{P} _{0}=0$. 
\end{prop}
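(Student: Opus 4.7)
The plan is to derive Proposition \ref{prop:cond} from the following explicit construction (Kendall's representation): if $(\mathcal{P}_t)_{t \geq 0}$ is a rate-$1$ Poisson process and $\mathcal{E}'$ is an independent $\Exp(1)$ random variable, then
\[
\tilde{Y}_t \,:=\, \mathcal{P}_{\mathcal{E}'(e^{\lambda t}-1)} + 1, \qquad t \geq 0,
\]
defines a Yule process of parameter $\lambda$ whose terminal value is $\mathcal{E}'$. Once this is established, the joint law of $(\tilde{Y}, \mathcal{E}')$ coincides with that of $(Y, \mathcal{E})$, and the identity stated in the proposition becomes trivial, since $\{\tilde{Y}_{\lambda^{-1}\log(1+t_i/\mathcal{E}')} = n_i\}$ is the event $\{\mathcal{P}_{t_i} = n_i - 1\}$ and $\mathcal{P}$ is independent of $\mathcal{E}'$.

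First I would check that $\mathcal{E}'$ is the terminal value of $\tilde{Y}$. Writing
\[
e^{-\lambda t}\tilde{Y}_t \,=\, \frac{\mathcal{P}_{\mathcal{E}'(e^{\lambda t}-1)}}{\mathcal{E}'(e^{\lambda t}-1)} \cdot \mathcal{E}'(1-e^{-\lambda t}) + e^{-\lambda t}
\]
and invoking the strong law of large numbers for the Poisson process ($\mathcal{P}_s/s \to 1$ almost surely as $s \to \infty$) yields $e^{-\lambda t}\tilde{Y}_t \to \mathcal{E}'$ almost surely, as required.

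The main step is to prove $\tilde{Y} \stackrel{(d)}{=} Y$. I would do this by matching, for every $t > 0$ and $n \geq 0$, the joint density of the first $n$ jump times restricted to the event $\{Y_t = n+1\}$; such densities determine the law of any pure-birth process. On the Yule side, Proposition \ref{prop:Yule}(i) together with the independence of $J_{n+1} - J_n \sim \Exp(\lambda(n+1))$ yields, for $0 < s_1 < \cdots < s_n \leq t$, the expression
\[
\lambda^n\, n!\, e^{-(n+1)\lambda t} \prod_{i=1}^{n} e^{\lambda s_i}.
\]
On the $\tilde{Y}$-side, conditioning on $\mathcal{E}' = y$, the jump times $\tilde{J}_i$ correspond via $\hat{J}_i = y(e^{\lambda \tilde{J}_i}-1)$ to the first $n$ jumps of $\mathcal{P}$, and $\{\tilde{Y}_t = n+1\}$ becomes $\{\hat{J}_n \leq y(e^{\lambda t}-1) < \hat{J}_{n+1}\}$, an event whose joint density in $(\hat{J}_1, \ldots, \hat{J}_n)$ is the constant $e^{-y(e^{\lambda t}-1)}$. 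The change of variables $d\hat{J}_i = \lambda y e^{\lambda \tilde{s}_i}\, d\tilde{s}_i$ followed by integration against $\mathcal{E}' \sim \Exp(1)$, using $\int_0^\infty y^n e^{-y e^{\lambda t}}\, dy = n!/e^{(n+1)\lambda t}$, produces the same expression.

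The main obstacle is precisely this density matching: one must execute the change of variables on the Poisson side and the telescoping $\sum_{i=1}^n i(s_i - s_{i-1}) = n s_n - \sum_{i=1}^{n-1} s_i$ on the Yule side with enough care for all the exponential factors to line up. Everything else -- the terminal value convergence, and the deduction of the proposition's factorization identity from the representation -- is essentially automatic.
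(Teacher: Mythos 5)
Your proof is correct, but it takes a genuinely different route from the paper: the paper does not prove Proposition \ref{prop:cond} at all. It is stated as a known result and attributed to Kendall, with the paper pointing the reader to \cite[Section 11 in Chapter III, Theorem 2]{AN72} and \cite[Theorem 1]{Ken66} for a proof. You instead give a self-contained argument: define $\tilde Y_t=\mathcal P_{\mathcal E'(e^{\lambda t}-1)}+1$, check via the strong law for $\mathcal P$ that its terminal value is $\mathcal E'$, and then identify the law of $\tilde Y$ with that of the Yule process $Y$ by matching, for each $n\ge 0$ and $t>0$, the sub-probability density of the first $n$ jump times restricted to $\{Y_t=n+1\}$. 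I checked the computation: on the Yule side the telescoping $\sum_{i=1}^{n+1} i(s_i-s_{i-1})=(n+1)s_{n+1}-\sum_{i=1}^n s_i$ followed by integrating out $J_{n+1}$ over $(t,\infty)$ gives $\lambda^n n!\,e^{-(n+1)\lambda t}\prod_{i=1}^n e^{\lambda s_i}$; on the Poisson side the restricted density $e^{-y(e^{\lambda t}-1)}$, the Jacobian $\prod_i \lambda y\,e^{\lambda s_i}$, and the gamma integral $\int_0^\infty y^n e^{-ye^{\lambda t}}\,dy=n!\,e^{-(n+1)\lambda t}$ reproduce exactly the same expression. Once $(\tilde Y,\mathcal E')\stackrel{(d)}{=}(Y,\mathcal E)$ is established (note the terminal value is a.s. a measurable functional of the path, so joint equality in law is automatic), the stated factorization follows because $\tilde Y_{\lambda^{-1}\log(1+t_i/\mathcal E')}=\mathcal P_{t_i}+1$ identically and $\mathcal P$ is independent of $\mathcal E'$. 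The only point worth tightening in a final write-up is the assertion that the restricted jump-time densities determine the law of a pure-birth process; this is true (taking $t\downarrow s_n$ recovers the unrestricted density of $(J_1,\dots,J_n)$, or one can reconstruct finite-dimensional distributions of $Y$ directly), but it deserves one sentence of justification. What your approach buys is a proof that uses nothing beyond elementary facts about exponential and Poisson laws; what the paper's citation buys is brevity, at the cost of importing Kendall's theorem as a black box.
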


In other words, if $(\mathcal{P} _t)_{t \geq 0}$ is a Poisson process with unit rate starting from $0$ and $ \mathcal{E}$ is an independent exponentially distributed random variable, then the process $(Z_t)_{t \geq 0}$ defined by $Z_t=\mathcal{P}_{ \mathcal{E}( e^{ \lambda t}-1)}+1$ is a Yule process of parameter $ \lambda$ with terminal value $ \mathcal{E}$. This result will be used by transferring calculations for Yule processes to more tractable calculations involving Poisson processes.

\bigskip

We now introduce some important notation that will be used in the sequel. Let $ \mathcal{E}$ and $ \overline{\mathcal{E}}$ be two independent exponential random variables of parameter $1$, and $(\mathcal{P}_t)_{t \geq 0}$ and $ (\overline{\mathcal{P}}_t)_{t \geq 0}$  be two independent Poisson processes with unit rate starting from $0$ (all defined on the same probability space). By convention, we set $ \mathcal{P}_{0-}= \overline{  \mathcal{P} }_{0-}=-1$.  For every $t \geq 0$, set $Z_t=\mathcal{P}_{\mathcal{E}( e^{ t}-1)}+1$ and
$ \overline{Z}_t= \overline{\mathcal{P}}_{ \overline{\mathcal{E}}( e^{ \lambda t}-1)}+1$. In particular, $Z$ (resp. $ \overline {Z}$) is a Yule process with rate $1$ (resp. with rate $\lambda$) with initial value $1$. For $n \geq 0$, let $ \tau_{n}$ (resp. $ \overline { \tau}_{n}$) be the $n$-th jump time of $\mathcal{P}$ (resp. $\overline{\mathcal{P}}$). Note that $ \tau_{n}$ and $ \overline{ \tau}_n$ have both the same distribution as the sum of $n$ i.i.d.~$ \Exp(1)$ random variables.

By Proposition \ref {prop:cond}, without loss of generality, we may and will assume that the processes $( \mathcal{S}, \mathcal{R})$ and $(Z, \overline {Z})$   are coupled as follows: \begin{equation}
\label{eq:eqlaw2}\mathcal{R}(t)=Z_t \quad (t \geq 0) , \qquad \mathcal{S}_{N}(t)= \overline{Z}_{( {\sigma}_{N}(N)-t)-}, \qquad (0 \leq t \leq  {\sigma}_{N}(N)),
\end{equation}
and, for every $0 \leq i \leq N$, 
\begin{equation}
\label{eq:eqlaw1}\rho(i) = \ln \left( 1 + \frac{ \tau_{i}}{ \mathcal{E}} \right),  \qquad {\sigma}_{N}(i)= \frac{1}{ \lambda}  \ln \left( 1 + \frac{ \overline{\tau}_{N}}{  \overline{\mathcal{E}}} \right) - \frac{1}{ \lambda}  \ln \left( 1 + \frac{ \overline{\tau}_{N-i}}{  \overline{\mathcal{E}}} \right).
\end{equation}
 See Fig.~\ref {fig:coupling} for an illustration. In particular, ${\sigma}_{N}(N)=  \lambda^{-1} \ln \left( 1 + { \overline{\tau}_{N}}/{  \overline{\mathcal{E}}} \right) $, and by Proposition \ref {prop:Yule} (iii), we have
\begin{equation}
\label{eq:cv}\lambda {\sigma}_{N}(N)- \ln(N)  \quad\mathop{\longrightarrow}^{a.s.}_{N \rightarrow \infty} \quad - \ln( \overline{\mathcal{E}}), \qquad  \rho(N)- \ln(N)  \quad\mathop{\longrightarrow}^{a.s.}_{N \rightarrow \infty} \quad - \ln( \mathcal{E}),
\end{equation}
where the convergence holds almost surely.  Note also that it is the same process $ \overline {Z}$ that appears in the definition of $ \mathcal{S}_{N}$ for different values of $N$.  This is an important feature of this coupling.

\begin{figure*}[h]
\begin{center}
\includegraphics[scale=1]{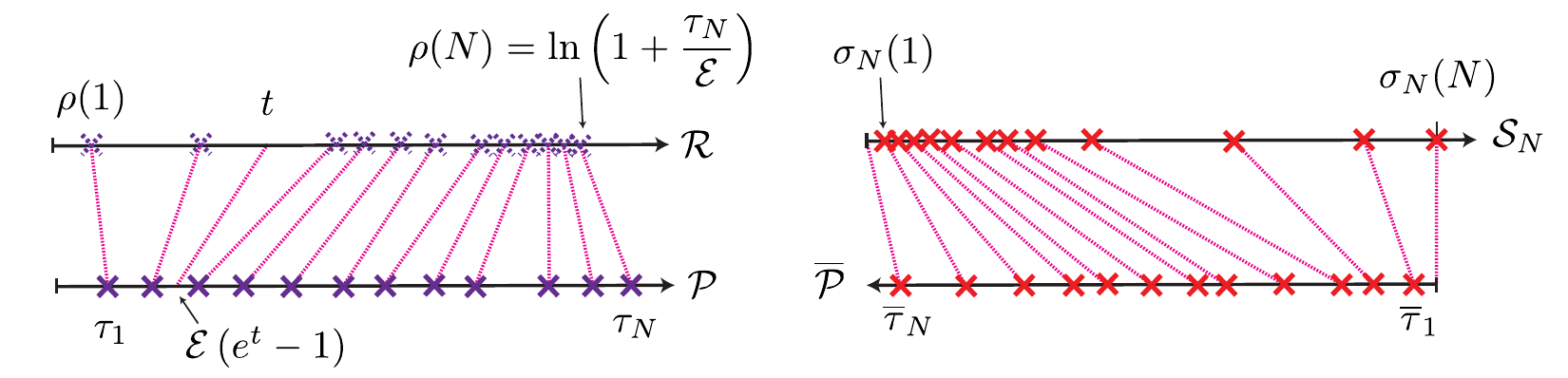}
\caption{\label{fig:coupling}Illustration of the coupling between $ \mathcal{P}$ and $ \mathcal{R}$, and between $ \overline{ \mathcal{P}}$ and $ \mathcal{S}_{N}$.}
\end{center}
\end{figure*}

 Recall that $ \overline{ \mathcal{S}}_{N}(t)=N- \mathcal{S}_{N}(t)$ is the number of jumps of $ \mathcal{S}_{N}$ on the interval $[0,t]$, for $0 \leq t \leq {\sigma}_{N}(N)$. By  \eqref {eq:eqlaw1} and \eqref {eq:eqlaw2},
 we have
 $$ \overline{ \mathcal{S}}_{N}(t)=N-1- \overline{\mathcal{P}}_{ \overline{\mathcal{E}}( e^{ \lambda( {\sigma}_{N}(N)-t)}-1)-}=N-1-\overline{\mathcal{P}}_{ (\overline{ \tau}_{N}-( \overline{\tau}_{N}+ \overline{\mathcal{E}}) \left(1- e^{- \lambda t} \right))-} \quad \textrm { for } 0 \leq  t \leq  \frac{1}{\lambda}  \ln \left(1+ \frac{\overline{ \tau}_{N}}{\overline{ \mathcal{E}}} \right) .$$
By the time-reversal property of Poisson processes,  $( \overline{\mathcal{P}}_{t}; 0 \leq t \leq \overline{\tau}_{N})$ has the same distribution as $(N-1-\overline{\mathcal{P}}_{( \overline{\tau}_{N}-t)-}; 0 \leq t \leq  \overline{\tau}_{ N})$. Indeed, conditionally on $ \overline{  \tau}_{N}$, the jump times of these processes are distributed as $N-1$ i.i.d.~points on $[0,\overline{  \tau}_{N}]$. Hence 
\begin{equation}
\label{eq:ubarre}( \overline{\mathcal{S}}_{N}(t); \quad  0 \leq t \leq  {\sigma}_{N}(N)) \qquad \mathop{=}^{(d)} \qquad  \left( \overline{\mathcal{P}}_{( \overline{\tau}_{N}+ \overline{\mathcal{E}}) \left(1- e^{- \lambda t} \right)};  \quad 0 \leq  t \leq  \frac{1}{\lambda}  \ln \left(1+ \frac{\overline{ \tau}_{N}}{\overline{ \mathcal{E}}} \right) \right).
\end{equation}
In addition, by \eqref {eq:eqlaw2}, 
 \begin{equation}
 \label{eq:v} (\mathcal{R}(t); \quad t \geq 0)=(\mathcal{P}_{\mathcal{E}( e^{ t}-1)}+1; \quad t \geq 0).
 \end{equation}

 \section{Critical value for extinction probabilities}

  The goal of this section is to provide a proof of Theorem \ref{thm:critical} by relying on the following result.
  
       \begin {lem}\label{lem:tech1} For every $ \lambda>0$ we have    $$\Pr { {\sigma}_{N}(N)<\rho(N) \textrm{ and there exists } 0 \leq t \leq {\sigma}_{N}(N) \textrm{ such that } \overline{\mathcal{S}}_{N}(t)< \mathcal{R}(t)-1}   \quad\mathop{\longrightarrow}_{N \rightarrow \infty} \quad 0.$$       
      \end{lem}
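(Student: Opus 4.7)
The plan is to reduce the event to a ballot-type question on a random walk with independent increments, and then control it via a union bound and Chernoff-type concentration. Setting $X_k := \sigma_N(k) - \rho(k)$ and using the descriptions of the inter-jump gaps given just before Proposition \ref{prop:ext}, one obtains the explicit representation
$$X_k \;=\; \sum_{i=0}^{k-1} \frac{E_i}{\lambda(N-i)} \;-\; \sum_{i=1}^{k} \frac{F_i}{i},$$
where $(E_i)_{i \geq 0}$ and $(F_j)_{j \geq 1}$ are two independent i.i.d.~families of $\Exp(1)$ random variables. Set $J := \min\{k \geq 1 : X_k > 0\}$ (with $\inf \emptyset = +\infty$). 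By Proposition \ref{prop:ext}(i), the event $E^N_{ext}$ coincides with $\{J > N\}$, so the event in the lemma equals $\{X_N < 0, \, J \leq N-1\}$.

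The key structural observation is that $X_k$ and $Y_k := X_N - X_k$ depend on disjoint subfamilies of the $(E_i, F_j)$ and are therefore independent. A union bound then yields
$$\Pr{X_N < 0, \, J \leq N-1} \;\leq\; \sum_{k=1}^{N-1} \Pr{X_k > 0}\,\Pr{Y_k < 0}.$$
A direct computation gives $\Es{X_k} = \frac{1}{\lambda}(H_N - H_{N-k}) - H_k$ and $\Es{Y_k} = \frac{1}{\lambda} H_{N-k} + H_k - H_N$, with $H_n := \sum_{j=1}^n 1/j$, and both variances are bounded uniformly in $(N, k)$ by a constant depending only on $\lambda$.

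I would next split the sum over $k$ into three ranges, fixing a small $\varepsilon \in (0, 1/2)$. In the bulk $N^\varepsilon \leq k \leq N - N^\varepsilon$, both $-\Es{X_k}$ and $\Es{Y_k}$ are bounded below by a constant times $\varepsilon \log N$, and a Chernoff-type bound applied to the rescaled-exponential sums (using the explicit moment generating function $\prod_i (1-\theta/(\lambda(N-i)))^{-1}\prod_j(1+\theta/j)^{-1}$ and optimizing $\theta$ of order $\log N$) delivers $\Pr{X_k > 0} \cdot \Pr{Y_k < 0} \leq \exp(-c \varepsilon^2 (\log N)^2)$, so this range contributes $o(1)$ once summed. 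For the left boundary $1 \leq k \leq N^\varepsilon$, Chernoff is too crude; instead, bounding every denominator $\lambda(N-i)$ from below by $\lambda(N-k+1)$ and $\sum_{i=1}^k F_i/i$ from below by $F_1$, an explicit Gamma-versus-exponential integral yields
$$\Pr{X_k > 0} \;\leq\; 1 - \bigl(1 + (\lambda(N-k+1))^{-1}\bigr)^{-k} \;\leq\; \frac{k}{\lambda(N-k+1)},$$
so this range contributes $\sum_{k \leq N^\varepsilon} O(k/N) = O(N^{2\varepsilon - 1}) = o(1)$. A symmetric argument on $Y_k$ handles the right boundary $N - N^\varepsilon \leq k \leq N - 1$.

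The main technical obstacle will be to make the bulk Chernoff estimate uniform in $k$: the moment generating function of $\sum E_i/(\lambda(N-i))$ has a singularity at $\theta = \lambda(N-k+1)$, so one must verify that the optimal $\theta$ of order $\log N$ stays safely below this threshold throughout the range $k \leq N - N^\varepsilon$. This is automatic once $N-k \gg \log N$, but requires careful tracking of the quadratic Taylor remainder in $\log \Es{e^{\theta X_k}}$ around $\theta = 0$ to extract the desired $\exp(-c (\log N)^2)$ decay.
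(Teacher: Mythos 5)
Your reformulation of the event as $\{X_N<0,\,J\le N-1\}$ with $X_k=\sigma_N(k)-\rho(k)$ is correct, and the overall strategy (union bound, independence of $X_k$ and $Y_k=X_N-X_k$, Gamma-versus-exponential bound on the two boundary ranges) is sound. It is also a genuinely different route from the paper's proof: the paper dispatches $\lambda\in(0,1)$ in one line (since there $\Pr{\sigma_N(N)<\rho(N)}\to0$), and for $\lambda\ge1$ passes to Kendall's representation, conditions on the terminal values $\mathcal E,\overline{\mathcal E}$, changes the time variable, and splits over $i\le\sqrt N$ versus $i>\sqrt N$ with Poisson tail bounds. Your argument treats all $\lambda>0$ uniformly by a direct random-walk analysis, avoiding the Poisson coupling; in that sense it is more self-contained, though as written it hides the fact that the $\lambda\in(0,1)$ case is essentially trivial.

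There is, however, a genuine error in the bulk estimate. You claim that for $N^\varepsilon\le k\le N-N^\varepsilon$ \emph{both} $-\Es{X_k}$ and $\Es{Y_k}$ are bounded below by $c\,\varepsilon\log N$. This is false whenever $\lambda\ne1$. For instance with $\lambda=2$ and $k=N^{\varepsilon}$ one has $\Es{Y_k}=\tfrac12 H_{N-k}+H_k-H_N\approx(\varepsilon-\tfrac12)\log N<0$ for $\varepsilon<1/2$, so $\Pr{Y_k<0}$ is not small at all (it tends to $1$); symmetrically, for $\lambda<1$ and $k$ near $N$ the quantity $-\Es{X_k}$ is negative. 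The identity $\Es{X_k}+\Es{Y_k}=\Es{X_N}=(\tfrac1\lambda-1)H_N$ makes clear that both inequalities cannot be expected simultaneously unless $\lambda=1$. Fortunately the argument does not actually need both: since $\Pr{X_k>0}\Pr{Y_k<0}\le\min\bigl(\Pr{X_k>0},\Pr{Y_k<0}\bigr)$, it suffices that \emph{at least one} of $-\Es{X_k},\Es{Y_k}$ is of order $\log N$. Writing $a=\ln k$, $b=\ln(N-k)$, $c=\ln N$, the two conditions ``both small'' read $a+b/\lambda<c_0c+\min(c,c/\lambda)$; combined with $a,b\ge\varepsilon c$, $a,b\le c$ and $a+b\ge(1+\varepsilon)c-O(1)$ (which holds in the bulk because $\min(k,N-k)\ge N^\varepsilon$ while $\max(k,N-k)\ge N/2$), a short linear-programming check shows $\max(-\Es{X_k},\Es{Y_k})\ge\varepsilon\log N-O(1)$ uniformly for all $\lambda>0$. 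With that corrected statement and the bounded variances, a single Chernoff bound (still taking $\theta$ of order $\log N$, which stays below the threshold $\lambda(N-k+1)\ge\lambda N^\varepsilon$, respectively $k+1\ge N^\varepsilon$) gives $\min\bigl(\Pr{X_k>0},\Pr{Y_k<0}\bigr)\le\exp(-c(\log N)^2)$ and the bulk sum is $o(1)$. You should also be explicit that the lower bound $\sum_{i\le k}\ln(1+\theta/i)\ge\theta H_k-\theta H_\theta-O(\theta)$ you invoke loses a $\theta\log\log N$ term (since $\theta/i$ is not small for $i\lesssim\theta$), but this is lower order compared with $\theta\cdot\varepsilon\log N$ and does not affect the conclusion.
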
  
  
  Since $  \{ \overline{\mathcal{S}}_{N}(t) \geq   \mathcal{R}(t)-1 \textrm { for every } 0 \leq t \leq {\sigma}_{N}(N)\} \subset   \{{\sigma}_{N}(N)<\rho(N)\}$, in view of Proposition \ref{prop:ext} (i), Lemma \ref{lem:tech1} entails
  $$ \Pr { {\sigma}_{N}(N)<\rho(N)} - \Pr{E^{N}_{ext}}  \quad\mathop{\longrightarrow}_{N \rightarrow \infty} \quad 0.$$
  This result comes intuitively from the fact that  up to time ${\sigma}_{N}(N)$, the jumps of $ \mathcal{S}$ are concentrated around $0$, while the jumps of $ \mathcal{R}$ are concentrated around ${\sigma}_{N}(N)$ (see Fig.~\ref{fig:2}), implying that with probability tending to one, $ \overline{\mathcal{S}}_{N}(t) \geq   \mathcal{R}(t)-1$ holds for every  $0 \leq t \leq {\sigma}_{N}(N)$ if and only if it holds for $ t={\sigma}_{N}(N)$.   However, its proof  is technical and postponed to Section \ref {sec:t1}.

\begin{figure*}[h]
\begin{center}
\includegraphics[scale=1]{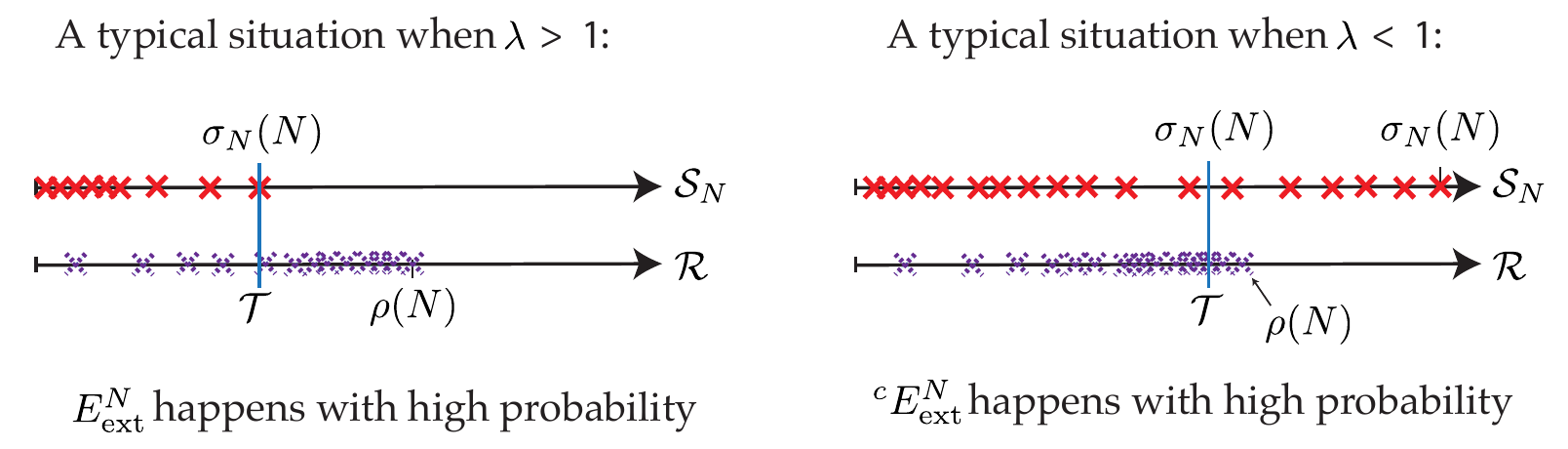}
\caption{\label{fig:2}Illustration of Lemma \ref{lem:tech1}.}
\end{center}
\end{figure*}

 \begin {proof}[Proof of Theorem \ref{thm:critical}] We have already seen that
 $ \P(E^{N}_{ext}) - \Pr {{\sigma}_{N}(N)<\rho(N)}$ converges to $0$ as $N \rightarrow \infty$. When $ \lambda \neq 1$, by \eqref {eq:cv}, ${\sigma}_{N}(N)/\rho(N)$ converges almost surely to $ 1/\lambda$, and the conclusion follows. When $ \lambda =1$, again by \eqref {eq:cv}, ${\sigma}_{N}(N)-\rho(N)$ converges almost surely to $ \ln (\mathcal{E}/ \overline{  \mathcal{E} })$ and the conclusion follows as well since $ \P(\ln ( \mathcal{E}/ \overline{ \mathcal{E}})<0)=1/2$.  
 \end {proof}

 \section{Large population limit: convergence in distribution}
 
 The goal of this section is to study the asymptotic behavior of $S^{(N)},I^{(N)}$ and $R^{(N)}$ with respect to weak convergence, and to prove in particular Theorems \ref{thm:stateS1}, \ref{thm:stateR1} and \ref{thm:stateI1}.
  
\subsection {A technical lemma}
 We first make some useful observations. If $\rho(1)<{\sigma}_{N}(1)$, then the recovered individual starts by spreading to the infected one, so that the chase-escape processes terminates with $N$ susceptible individuals and $2$ recovered individuals. Observe that  this happens with probability
\begin{equation}
\label{eq:t1} \Pr{\rho(1)<{\sigma}_{N}(1)} = \frac{1}{ \lambda N+1},
\end{equation}
and that
\begin{eqnarray}
 &&\{\rho(1)>{\sigma}_{N}(1)\} \cap \{ \overline{\mathcal{S}}_{N}(t) \geq   \mathcal{R}(t)-1 \textrm { for every } \rho(2) \leq t \leq {\sigma}_{N}(N)\} \notag\\
 && \qquad\qquad \qquad =  \quad  \{ \overline{\mathcal{S}}_{N}(t) \geq   \mathcal{R}(t)-1 \textrm { for every } 0 \leq t \leq {\sigma}_{N}(N)\} \notag \\
 && \qquad \qquad\qquad = \quad E^{N}_{ext}.\label{eq:t0}
\end{eqnarray}

We will need the following  result (see Fig.~\ref{fig:3} for an illustration).
 
     \begin {lem}\label{lem:tech2}For every $\lambda>0$, there exists a constant $C>0$ such that, for every integer $N \geq 1$,
     $$\Pr {\textrm{there exists } t \in [\rho(2), \min(\rho(N)-1/\ln(N), {\sigma}_{N}(N))] \textrm { such that } \overline{ \mathcal{S}}_{N}(t) <  \mathcal{R}(t)-1 } \leq \frac{C}{N^{2}}.$$
     \end{lem}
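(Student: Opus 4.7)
The plan is to reduce the continuous-time existence event to a discrete union bound indexed by the number of recovery jumps, and then to bound each summand via an exact Binomial representation of $\overline{\mathcal{S}}_N$.

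\smallskip

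\noindent\textit{Step 1: Discretisation.} Consider $D(t):=\mathcal{R}(t)-\overline{\mathcal{S}}_N(t)$, which starts at $D(0)=1$ and evolves by $\pm1$ jumps ($+1$ at jumps of $\mathcal{R}$, $-1$ at jumps of $\overline{\mathcal{S}}_N$), hence is non-increasing between consecutive jumps of $\mathcal{R}$. If $D(t_0)\ge 2$ for some $t_0\in[\rho(2),\min(\rho(N)-1/\ln N,{\sigma}_{N}(N))]$, take the last $\mathcal{R}$-jump at or before $t_0$, say $\rho(k)$: necessarily $k\ge 2$ (because $t_0\ge\rho(2)$), $k\le N-1$ (because $t_0<\rho(N)$), and $D(\rho(k))\ge D(t_0)\ge 2$, which rearranges to $\sigma_{N}(k)>\rho(k)$. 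A union bound thus yields
\[
\Pr{\textrm{bad event}}\ \le\ \sum_{k=2}^{N-1}\Pr{\rho(k)<\sigma_{N}(k),\ \rho(k)\le\rho(N)-1/\ln N}.
\]

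\noindent\textit{Step 2: Binomial representation.} Since $\mathcal{S}_{N}$ is a pure death chain in which each of the $N$ individuals dies independently at rate $\lambda$, for every fixed $t\ge 0$ one has $\overline{\mathcal{S}}_N(t)\sim\mathrm{Bin}(N,1-e^{-\lambda t})$, and $\overline{\mathcal{S}}_N$ is independent of $\mathcal{R}$. Conditioning on $\rho(k)$,
\[
\Pr{\rho(k)<\sigma_{N}(k)\,\big|\,\rho(k)}\ =\ \Pr{\mathrm{Bin}(N,1-e^{-\lambda\rho(k)})\le k-1\,\big|\,\rho(k)}.
\]

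\noindent\textit{Step 3: Three regimes for $k$.}
\begin{enumerate}
\item[(a)] \emph{Small $k$ (say $2\le k\le k_0\asymp\sqrt N$).} From $\rho(k)=\sum_{i=1}^k\Exp(i)$, a union bound gives $\Pr{\rho(k)<s}\le k!\,s^k$. Conditioning on $\sigma_{N}(k)$ and using that $\sigma_{N}(k)$ is stochastically dominated by a $\Gamma(k,\lambda(N-k+1))$ random variable, one obtains $\Pr{\rho(k)<\sigma_{N}(k)}\le k!\,\Es{\sigma_{N}(k)^k}\le k(2k-1)!/(\lambda(N-k+1))^k$. The $k=2$ term is $O(1/N^2)$ and the $k\ge 3$ contributions sum to smaller order.
\item[(b)] \emph{Intermediate $k$ ($k_0<k\le N-cN/\ln N$).} Using Kendall's representation and truncating the terminal values $\mathcal{E},\overline{\mathcal{E}}$ to a typical range $[N^{-10},\ln^2 N]$ (the complement having probability $O(1/N^{10})$), one shows that $Np-k\gtrsim N/\ln N$, where $p:=1-e^{-\lambda\rho(k)}$. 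Chernoff's lower-tail bound for the Binomial then yields $\Pr{\mathrm{Bin}(N,p)\le k-1}\le\exp(-c'N/\ln^2 N)$ per summand, and the total is $o(1/N^2)$.
\item[(c)] \emph{Large $k$ ($k>N-cN/\ln N$).} The constraint $\rho(N)-\rho(k)\ge 1/\ln N$ is then a large deviation on $\sum_{i=k+1}^N\Exp(i)$, whose mean $\lesssim(N-k)/k\ll 1/\ln N$; a Chernoff tail bound on this sum gives probability $\le\exp(-c''N/\ln N)$ per $k$, with total contribution $o(1/N^2)$.
\end{enumerate}

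\noindent\textit{Main obstacle.} The technical crux is matching regimes (a) and (b). The moment bound in (a) breaks at $k\asymp\sqrt N$: the ratio of consecutive summands $k(2k-1)!/(\lambda N)^k$ is $\asymp 4k^2/(\lambda N)$, crossing $1$ at that scale. This dictates the choice $k_0\asymp\sqrt N$ and demands a Chernoff argument in (b) that is uniform in $k\in[\sqrt N,\,N-cN/\ln N]$, which in turn requires careful truncation of the exponentials $\mathcal{E},\overline{\mathcal{E}}$; the complementary truncation event must itself be shown to contribute only $O(1/N^2)$, which follows from the exponentially decaying tails of $\mathcal{E}$ and $\overline{\mathcal{E}}$.
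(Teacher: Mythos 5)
Your proposal is correct in its overall structure, and it takes a route that is genuinely different from the paper's. Both proofs discretize the bad event at the jump times of $\mathcal{R}$ and separate into regimes, but the key mechanisms differ. The paper first applies Kendall's time change $s=\mathcal{E}(e^t-1)$, reduces everything to a race between the two Poisson processes $\mathcal{P}$ and $\overline{\mathcal{P}}$, and for the regime $2\le i\le N^{3/4}$ computes $\Pr{\overline{\mathcal{P}}_{(\tau_i/(\tau_i+\mathcal{E}))C_0N}<i}$ explicitly using the Beta$(i,1)$ density of $\tau_i/(\tau_i+\mathcal{E})$, which gives the bound $\frac{(2i-1)!}{(i-1)!}(C_0N)^{-i}$; the regime beyond $N^{3/4}$ is handled via a monotonicity trick together with the Poisson tail bound of Lemma~\ref{lem:dev}(iii). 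You instead work in the original time variable, condition on $\rho(k)$ and invoke the exact Binomial marginal $\overline{\mathcal{S}}_N(\rho(k))\sim\mathrm{Bin}(N,1-e^{-\lambda\rho(k)})$, with the small-$k$ regime handled by the elementary moment bound $\Pr{\rho(k)<s}\le k!s^k$ together with a Gamma moment of $\sigma_N(k)$. Your small-$k$ estimate $k(2k-1)!/(\lambda(N-k+1))^k$ is weaker than the paper's by roughly a factor of $(k-1)!$, which is precisely why your cutoff $k_0\asymp\sqrt N$ falls short of the paper's $N^{3/4}$, forcing a wider Chernoff-based intermediate regime. Two points of your sketch are slightly off: in regime~(b), $p=1-e^{-\lambda\rho(k)}$ depends on $\mathcal{E}$ \emph{and} $\tau_k$, so you must also truncate $\tau_k\ge k/2$ (whose failure probabilities sum to $oe(\sqrt N)=o(N^{-2})$ over $k\ge\sqrt N$), not just $\mathcal{E}$; and $\overline{\mathcal{E}}$ never appears in your argument since the Binomial marginal absorbs it, so its mention is a red herring. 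Your approach is conceptually lighter (no Kendall representation, no Beta density computation) at the cost of a wider intermediate regime and looser intermediate constants; the paper's approach is sharper in the small-index regime, which lets it skip an intermediate stage altogether.
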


\begin{figure*}[h]
\begin{center}
\includegraphics[scale=1]{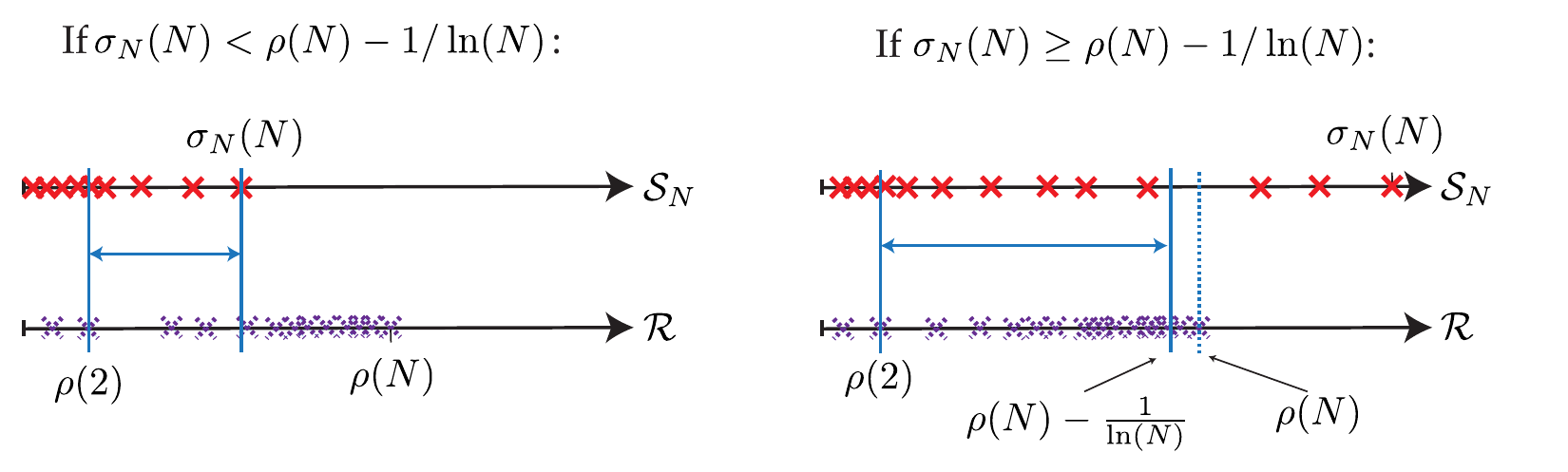}
\caption{\label{fig:3}Illustration of Lemma \ref{lem:tech2}: there exists  $t$ such that $\overline{ \mathcal{S}}_{N}(t) <  \mathcal{R}(t)-1$ in the interval delimited by the  arrows with probability less than $C/N^2$. In other words the probability that there exists  $t$  in the interval delimited by the  arrows, such that the number of dashed  crosses is greater than the number of bold crosses on $[0,t]$, is less than $C/N^2$.}
\end{center}
\end{figure*}
  
By Proposition \ref {prop:ext} (iii), this means that with probability tending to one, the process has not yet reached its absorbing state just before time $\min(\rho(N)-1/ \ln(N), {\sigma}_{N}(N))$. Notice also that
\begin{eqnarray*}
 &&\{\rho(1)>{\sigma}_{N}(1)\} \cap  \{ \overline{ \mathcal{S}}_{N}(t) \geq  \mathcal{R}(t)-1 \textrm{ for } t \in [\rho(2),\rho(N)-1/ \ln(N)]\} \notag\\
 & &\qquad\qquad \qquad =   \{\overline{ \mathcal{S}}_{N}(t) \geq  \mathcal{R}(t)-1 \textrm{ for } t \in [0,\rho(N)-1/ \ln(N)]\}
\end{eqnarray*}
Finally, we mention that for the proof of Theorem \ref{thm:stateS1}, we only need to know that $$\Pr {\textrm{there exists } t \in [\rho(2), \min(\rho(N)-1/\ln(N), {\sigma}_{N}(N))] \textrm { such that } \overline{ \mathcal{S}}_{N}(t) <  \mathcal{R}(t)-1 }  \quad\mathop{\longrightarrow}_{N \rightarrow \infty} \quad0.$$ However, we will need a bound on its speed of convergence to $0$ to establish Theorem \ref{thm:stateS2}. The additional term $1/\ln(N)$ which brings us away from ${\sigma}_{N}(N)$ plays an important role: indeed, it is not true that $\P($there exists $\rho(2) \leq t \leq {\sigma}_{N}(N)$  such that  $\overline{\mathcal{S}}_{N}(t)< \mathcal{R}(t)-1) \leq C/N^2$.

  \subsection{Susceptible vertices in the final state}
  
    In this section, we prove Theorem \ref{thm:stateS1} concerning the asymptotic behavior of $S^{(N)}$ with respect to weak convergence.

\begin{proof}[Proof of Theorem \ref{thm:stateS1}] Assume for the moment that $0 < \lambda \leq 1$. Conditionally on ${E}^{N}_{ext}$, we have $S^{(N)}=0$. It is thus enough to study the behavior of $S^{(N)}$ conditionally on ${}^{c}{E}^{N}_{ext}$.  By Lemma \ref {lem:tech1}, conditionally on ${}^{c}{E}^{N}_{ext}$, $\Pr {{\sigma}_{N}(N)>\rho(N)} \rightarrow 1$. Hence,  by  Lemma \ref{lem:tech2}, \eqref{eq:t0} and \eqref{eq:t1}, we have
$$\Pr {\textrm{there exists } t \in [0, \rho(N)-1/ \ln(N)] \textrm { such that } \overline{ \mathcal{S}}_{N}(t) <  \mathcal{R}(t)-1  | {}^{c}{E}^{N}_{ext}}  \quad\mathop{\longrightarrow}_{N \rightarrow \infty} \quad  0.$$ 
This means that conditionally on the fact that the infection does not spread to the whole graph, the time when the last infected individual recovers belongs to the interval $[\rho(N)-1/\ln(N),\rho(N)]$ with probability tending to one as $ N \rightarrow \infty$. By Proposition \ref{prop:ext} (iii), this implies that $ \mathcal{S} _{N}(\rho(N)) \leq  S^{(N)} \leq  \mathcal{S} _{N}(\rho(N)-  \ln(N)^{-1}) $, or, using \eqref{eq:eqlaw2},
\begin{equation}
\label{eq:pr} \overline{Z}_{{\sigma}_{N}(N)-\rho(N)}  \leq S^{(N)} \leq  \overline{Z}_{{\sigma}_{N}(N)-\rho(N)+\ln(N)^{-1}}.
\end{equation}
Notice that, for $u \geq 0$,\begin{equation}
\label{eq:Zu}\overline{Z}_{{\sigma}_{N}(N)-\rho(N)+u}=  \overline{\mathcal{P}}_{ \overline{\mathcal{E}}( e^{ \lambda {\sigma}_{N}(N)} e^{- \lambda \rho(N)} e^{ \lambda u}-1)}+1=\overline{\mathcal{P}}_{  (\overline{\mathcal{E}}+  \overline{  \tau}_{N})  \frac{ \mathcal{E}^{ \lambda} }{( \mathcal{E}+ \tau_{N})^{ \lambda} } e^{ \lambda u}-\overline{\mathcal{E}}}+1 \qquad (0 \leq u \leq \rho(N)),
\end{equation}
and in particular, for $u=0$, we have 
\begin{equation}
\label{eq:Zu0}\overline{Z}_{{\sigma}_{N}(N)-\rho(N)}  = \overline{\mathcal{P}}_{( \overline{ \mathcal{E}}+ \overline{ \tau}_{N}) \cdot \frac{ \mathcal{E}^ \lambda}{ ( \mathcal{E}+ \tau_{N})^ \lambda}- \overline{ \mathcal{E}}}+1.
\end{equation}  

Now assume that $ \lambda=1$.   Since $  \{{\sigma}_{N}(N)> \rho(N) \}\subset {}^{c}{E}^{N}_{ext}$, we have
\begin{eqnarray*}
\P({}^{c}{E}^{N}_{ext} \ \Delta \ \{   \mathcal{E}>\overline{ \mathcal{E}}\} ) &\leq&  \Pr{{}^{c}{E}^{N}_{ext} \ \Delta \ \{   {\sigma}_{N}(N)>\rho(N)\}}+\Pr{  \{{\sigma}_{N}(N)>\rho(N)\} \ \Delta \ \{   \mathcal{E}>\overline{ \mathcal{E}}\}} \\
&=& \Pr{{}^{c}{E}^{N}_{ext}, {\sigma}_{N}(N)<\rho(N)}+\Pr{  \{{\sigma}_{N}(N)>\rho(N)\} \ \Delta \ \{   \mathcal{E}>\overline{ \mathcal{E}}\}},
\end{eqnarray*}
where we write $ A \ \Delta \ B= ( A \cup B) \backslash A \cap B$ for two sets $A$ and $B$. By Lemma \ref {lem:tech1}, $\Pr{{}^{c}{E}^{N}_{ext}, {\sigma}_{N}(N)<\rho(N)} \rightarrow 0$ and $\Pr{  \{{\sigma}_{N}(N)>\rho(N)\} \ \Delta \ \{   \mathcal{E}>\overline{ \mathcal{E}}\}} \rightarrow 0$ as $ N \rightarrow \infty$, since ${\sigma}_{N}(N)-\rho(N)$ converges almost surely to $ \ln (\mathcal{E}/ \overline{  \mathcal{E} })$.  We conclude that  $\P({}^{c}{E}^{N}_{ext} \ \Delta \ \{   \mathcal{E}>\overline{ \mathcal{E}}\} ) \rightarrow 0$ as $ N \rightarrow \infty$. Hence it suffices to check that conditionally on $\mathcal{E}>\overline{ \mathcal{E}}$, $S^{(N)}$ converges in distribution to $G'$, where we recall that $G'$ is the random variable defined in the second remark following the statement of Theorem \ref{thm:stateS1}.

 By \eqref{eq:pr}, it is enough to check that, conditionally on $\mathcal{E}>\overline{ \mathcal{E}}$,  $ \overline{Z}_{{\sigma}_{N}(N)-\rho(N)}$ converges in distribution to $G'$ and that $  \overline{Z}_{{\sigma}_{N}(N)-\rho(N)+\ln(N)^{-1}}- \overline{Z}_{{\sigma}_{N}(N)-\rho(N)}$ converges in probability to $0$. To this end, we first show that
\begin{equation}
\label{eq:geom}\textrm { conditionally on  }  \mathcal{E}> \overline{ \mathcal{E}}, \qquad   \overline{\mathcal{P}}_{ \mathcal{E} \cdot \frac{ \overline{ \mathcal{E}}+ \overline{ \tau}_{N}}{  \mathcal{E}+ \tau_{N}}  - \overline{ \mathcal{E}}}+1  \quad\mathop{\longrightarrow}^{(d)}_{N \rightarrow \infty} \quad G'.\end{equation}
As $N \rightarrow \infty$, $ \mathcal{E} \cdot ( \overline{ \mathcal{E}}+ \overline{ \tau}_{N})/(  \mathcal{E}+ \tau_{N})  - \overline{ \mathcal{E}}$ converges almost surely to $\mathcal{E}- \overline{ \mathcal{E}}$ as $N \rightarrow \infty$. To simplify notation, we denote by $ \mathcal{L}(X)$ the law of a random variable $X$ and by $ \mathcal{L}(X|A)$ the law of $X$ conditionally on an event $A$. Hence  
$$ \mathcal{L} \left( \overline{\mathcal{P}}_{ \mathcal{E} \cdot ( \overline{ \mathcal{E}}+ \overline{ \tau}_{N})/(  \mathcal{E}+ \tau_{N})  - \overline{ \mathcal{E}}} \ \big | \mathcal{E}> \overline{ \mathcal{E}}\right)  \quad\mathop{\longrightarrow}^{(d)}_{N \rightarrow \infty} \quad  \mathcal{L} \left(  \overline{\mathcal{P}}_{\mathcal{E}- \overline{ \mathcal{E}}} \ \big| \mathcal{E}> \overline{ \mathcal{E}} \right)  .$$ 
But $ \mathcal{L}( \mathcal{E}- \overline{ \mathcal{E}} \ | \mathcal{E}> \overline{ \mathcal{E}})$   is $ \Exp(1)$, and $\overline{  \mathcal{P} }_{ \Exp(1)}+1$ has the same distribution as $G'$. This entails \eqref{eq:geom}.  We next claim that
\begin{equation}
\label{eq:claimg2}\overline{Z}_{{\sigma}_{N}(N)-\rho(N)+1/ \ln(N)}-\overline{Z}_{{\sigma}_{N}(N)-\rho(N)}  \quad\mathop{\longrightarrow}^{(\P)}_{N \rightarrow \infty} \quad 0.
\end{equation}
Since the Poisson process $ \overline{  \mathcal{P} }$ has stationary increments, from \eqref{eq:Zu} we get that
$$\overline{Z}_{{\sigma}_{N}(N)-\rho(N)+1/ \ln(N)}-\overline{Z}_{{\sigma}_{N}(N)-\rho(N)} \quad \mathop{=}^{(d)} \quad
\overline{  \mathcal{P} }_{\mathcal{E} \cdot \frac{ \overline{ \mathcal{E}}+ \overline{ \tau}_{N}}{  \mathcal{E}+ \tau_{N}}( e^{ \ln(N)^{-1} }-1)}.$$
Our claim \eqref{eq:claimg2} then follows from the fact that $\mathcal{E} \cdot \frac{ \overline{ \mathcal{E}}+ \overline{ \tau}_{N}}{  \mathcal{E}+ \tau_{N}}( e^{\ln(N)^{-1}}-1)$ converges almost surely to $0$ as $ N \rightarrow \infty$. Theorem \ref {thm:stateS1} (ii) then follows.

Now assume that $ \lambda \in (0,1)$. By the functional law of large numbers, as $ N \rightarrow \infty$, the càdlàg process $(\overline{\mathcal{P}}_{tN}/ N; t \geq 0)$ converges in probability for the Skorokhod $J_1$ topology to the deterministic process $ t \mapsto t$ (see e.g \cite[Chap. 3]{Bil99}, \cite[Chap. VI]{JS03}  for background on càdlàg processes and the Skorokhod topology). Since
$$ \frac{1}{N^{1- \lambda}} \cdot \left( ( \overline{ \mathcal{E}}+ \overline{ \tau}_{N}) \cdot \frac{ \mathcal{E}^ \lambda}{ ( \mathcal{E}+ \tau_{N})^ \lambda}- \overline{ \mathcal{E}} \right)  \quad\mathop{\longrightarrow}^{a.s.}_{N \rightarrow \infty} \quad \mathcal{E}^ \lambda,$$
it follows that $\overline{\mathcal{P}}_{( \overline{ \mathcal{E}}+ \overline{ \tau}_{N}) \cdot { \mathcal{E}^ \lambda}\cdot  ( \mathcal{E}+ \tau_{N})^ {-\lambda}- \overline{ \mathcal{E}}}/N^{1- \lambda}$ converges in probability to $ \mathcal{E}^ \lambda$ as $ N \rightarrow \infty$, meaning that $ \overline{Z}_{{\sigma}_{N}(N)-\rho(N)} / N^{1- \lambda}$ converges in probability to $ \mathcal{E}^ \lambda$ as $ N \rightarrow \infty$.

In addition, as above,
$$\overline{Z}_{{\sigma}_{N}(N)-\rho(N)+1/ \ln(N)}-\overline{Z}_{{\sigma}_{N}(N)-\rho(N)} \quad \mathop{=}^{(d)} \quad
\overline{  \mathcal{P} }_{(\overline{\mathcal{E}}+  \overline{  \tau}_{N})  \frac{ \mathcal{E}^{ \lambda} }{( \mathcal{E}+ \tau_{N})^{ \lambda} } ( e^{ \lambda \ln(N)^{-1}}-1)},$$
so that $ N^{-(1- \lambda)} \cdot  \left( \overline{Z}_{{\sigma}_{N}(N)-\rho(N)+1/ \ln(N)}-\overline{Z}_{{\sigma}_{N}(N)-\rho(N)}  \right) \rightarrow 0$ in probability as $N \rightarrow \infty$.   As above, this establishes Theorem \ref{thm:stateS1} (i).

For the third assertion, it suffices to note that  $\Pr {S^{(N)}>0}= \Pr {{}^{c}E^{N}_{ext}}$, which, by Theorem \ref {thm:critical}, tends to $0$ as $N \rightarrow \infty$ when $ \lambda>1$.
  \end{proof}

  \subsection{Recovered individuals in the final state}
  
  In this section, we prove Theorem \ref{thm:stateR1}.
    
  \begin{proof}[Proof of Theorem \ref{thm:stateR1}] First assume that  $ \lambda>1$.  Then $ \Pr {{\sigma}_{N}(N)<\rho(N)} \rightarrow 1$ as $N \rightarrow \infty$, so that by Lemma \ref {lem:tech1} and Proposition \ref {prop:ext} (i), with probability tending to one as $N \rightarrow \infty$, ${\sigma}_{N}(N)$ is the moment when the last susceptible individual is infected. At that moment, the number of recovered individuals is $ \mathcal{R}({\sigma}_{N}(N))$ by Proposition \ref{prop:ext} (ii).

It is thus sufficient establish that, for every $t \in \R$,
  \begin{equation}
  \label{eq:s2} \Es {e^{it\mathcal{R}({{\sigma}_{N}({N})})/N^{1/ \lambda}}}  \quad\mathop{\longrightarrow}_{N \rightarrow \infty} \quad  \Es { e^{ it \textnormal{\textsf{Exp}}( \overline{ \mathcal{E}}^{1/\lambda})}}.
  \end{equation}
To this end, using Lemma \ref{prop:Yule} (ii), since $ \Es{e^{it \mathcal{R}(s)}}= (1-e^s(1-e^{-it}))^{-1}$ for $s \geq 0$ and $t \in \R$, we have for every $ t \in \R$
$$   \Es {e^{it\mathcal{R}({{\sigma}_{N}(N)})/N^{1/ \lambda}}}=  \Es{\frac{1}{1-e^{{\sigma}_{N}(N)} \left( 1- e^{-i t/N^{1/ \lambda}} \right)}}.$$
   Since  $\lambda {\sigma}_{N}(N)- \ln(N)  \rightarrow - \ln( \overline{\mathcal{E}})$ a.s. as $N \rightarrow \infty$, a straightforward computation entails that
  $$\frac{1}{1-e^{{\sigma}_{N}(N)} \left( 1- e^{-i t/N^{1/ \lambda}} \right)}  \quad\mathop{\longrightarrow}^{a.s.}_{N \rightarrow \infty} \quad \frac{ \overline{\mathcal{E}}^{1/ \lambda}}{ \overline{\mathcal{E}}^{1/ \lambda}-it}= \Es{ e^{it \Exp( \overline{\mathcal{E}}^{1/ \lambda})}  \big| \overline{\mathcal{E}} }.$$
Then \eqref {eq:s2} readily follows from the dominated convergence theorem after noting that 
$$  \forall s \geq 0, \quad \forall t \in \R, \qquad \left| \frac{1}{1-e^s(1-e^{-it})}\right| \leq 1.$$
This shows (iii).

Now assume that $ \lambda=1$. By  remark (ii) following Theorem \ref {thm:stateS1}, conditionally on $ {}^{c}{E}^{N}_{ext}$, $N-R^{(N)}$ converges in distribution to a positive random variable as $N \rightarrow \infty$. Hence, conditionally on $ {}^{c}{E}^{N}_{ext}$, $R^{(N)}/N$ converges in distribution to $1$. It is thus sufficient to establish that, conditionally on ${E}^{N}_{ext}$, $R^{(N)}/N$ converges in distribution to ${2}{(1+x)^{-2}} \mathbbm {1}_{[0,1]}(x) dx$.

On the event ${E}^{N}_{ext}$, we have $ R^{(N)}=\mathcal{R}({\sigma}_{N}(N))=\mathcal{P}_{ \mathcal{E}(e^ { {\sigma}_{N}(N)}-1)}+1$. Hence it is sufficient to check that\begin{equation}
\label{eq:c1} \textrm { conditionally on  } {E}^{N}_{ext}, \qquad  \frac{\mathcal{P}_{ \mathcal{E}(e^ { {\sigma}_{N}(N)}-1)}}{N} \quad\mathop{\longrightarrow}^{(d)}_{N \rightarrow \infty} \quad  \frac{2}{(1+x)^{2}} \mathbbm {1}_{[0,1]}(x) dx.
\end{equation}
Since ${\sigma}_{N}(N)-\rho(N)$ converges almost surely to $ \ln ( \overline{ \mathcal{E}}/ \mathcal{E})$, it follows that $ \P({E}^{N}_{ext} \ \Delta \ \{  \overline{ \mathcal{E}}> \mathcal{E}\} ) \rightarrow 0$ by Lemma \ref {lem:tech1}. Therefore  \eqref {eq:c1} will follow if we establish that
\begin{equation}
\label{eq:cZ} \textrm { conditionally on  }  \overline{ \mathcal{E}}> \mathcal{E}, \qquad  \frac{\mathcal{P}_{ \mathcal{E}(e^ { {\sigma}_{N}(N)}-1)}}{N} \quad\mathop{\longrightarrow}^{(d)}_{N \rightarrow \infty} \quad \frac{2}{(1+x)^{2}} \mathbbm {1}_{[0,1]}(x) dx.
\end{equation}
Since $ \mathcal{P}_{s}$ is a Poisson random variable of parameter $s$ and since $ \mathcal{P}$ is independent of $( \overline{ \mathcal{E}}, \mathcal{E}, {\sigma}_{N}(N))$,  using the explicit formula for the characteristic function of a Poisson random variable, we get that $$ \Es {e^{i t \mathcal{P}_{ \mathcal{E}(e^ { {\sigma}_{N}(N)}-1)}/N} \big|  \overline{ \mathcal{E}}> \mathcal{E} }= \Es{ e^{  \mathcal{E}(e^ { {\sigma}_{N}(N)}-1) (e^{it/N}-1)} \big|   \overline{ \mathcal{E}}> \mathcal{E} }, \qquad t \in \R.$$
   Since  $ {\sigma}_{N}(N)- \ln(N)  \rightarrow - \ln( \overline{\mathcal{E}})$ a.s. as $N \rightarrow \infty$, a straightforward computation entails that
$$ e^{  \mathcal{E}(e^ { {\sigma}_{N}(N)}-1) (e^{it/N}-1)}  \quad\mathop{\longrightarrow}^{a.s.}_{N \rightarrow \infty} \quad e^ { \frac{ \mathcal{E}}{ \overline{ \mathcal{E}}} it}.$$
Then, by the dominated convergence theorem\begin{equation}
\label{eq:cvdom1}\Es{ e^{  \mathcal{E}(e^ { {\sigma}_{N}(N)}-1) (e^{it/N}-1)} \big|   \overline{ \mathcal{E}}> \mathcal{E} }  \quad\mathop{\longrightarrow}_{N \rightarrow \infty} \quad \Es{ e^ { \frac{ \mathcal{E}}{ \overline{ \mathcal{E}}} it}\big|   \overline{ \mathcal{E}}> \mathcal{E} }.
\end{equation}
Indeed, we have the domination
$$ \big| \exp\big(  \mathcal{E}(e^ { {\sigma}_{N}(N)}-1) (e^{it/N}-1) \big) \big| = \exp \left(  \overline{  \tau}_{N}\cdot( \cos(t/N)-1) \cdot  {\mathcal{E}}/{ \overline{ \mathcal{E}}} \right) \leq1.$$
 It then suffices to notice that the density of the random variable $\overline{ \mathcal{E}}/ \mathcal{E}$, conditionally on $ { \mathcal{E}}> \overline{ \mathcal{E}}$, is ${2}/{(1+x)^{2}}$ on $[0,1]$ and the proof of (ii) is complete.

The first assertion of Theorem \ref {thm:stateR1} easily follows from Theorem \ref {thm:stateS1} and the fact that we have $R^{(N)}+S^{(N)}=N+2$ on the event $ {}^{c}E^{N}_{ext}$ and that $ \Pr {{}^{c}E^{N}_{ext}} \rightarrow 1$ as $N \rightarrow \infty$ when $ \lambda <1$.
  \end {proof}

 \subsection {Infected individuals in the final state}

Once we have established Theorems \ref {thm:stateS1} and \ref {thm:stateR1}, the proof of Theorem \ref {thm:stateI1} is effortless. 

\begin {proof}[Proof of Theorem \ref {thm:stateI1}]  When $ \lambda \in (0,1)$, we have $ \Pr {I^{(N)}>0} = \Pr {E^{N}_{ext}} \rightarrow 0$ by Theorem \ref {thm:critical}. When $ \lambda=1$, the result follows by combining Theorem \ref {thm:stateS1} (ii) and Theorem \ref {thm:stateR1} (ii) with the equality $I^{(N)}=N+2-S^{(N)}-R^{(N)}$.  Finally, when $ \lambda>1$, the desired result is a consequence of Theorem \ref {thm:stateR1} (iii) and the fact that we have $R^{(N)}+I^{(N)}=N+2$ on the event $ E^{N}_{ext}$ and that $ \Pr {E^{N}_{ext}} \rightarrow 1$ as $N \rightarrow \infty$.
\end {proof}

\section {Final state in the large population limit:  convergence in $ \L^{p}$}
  \label {sec:LP}
  The goal of this section is to study the asymptotic behavior of $S^{(N)},I^{(N)}$ and $R^{(N)}$ with respect to $ \L^p$ convergence. 
 
 \subsection {Main results}
 
 We start by stating the results which will be proved in this section. 
 
 \begin{thm}[Number of remaining susceptible individuals]\label{thm:stateS2} The following assertions hold.
\begin{enumerate}
 \item[(i)] Assume that $ \lambda \in (0,1)$. The convergence \eqref {eq:S1} holds in $ \L^p$ for every $1 \leq p < 1/\lambda$. In particular, $ \Es { S^{(N)}} \sim N^{1- \lambda} \cdot  \Gamma( \lambda+1)$ as $ N \rightarrow \infty$.
 For $ p= 1/ \lambda$, the convergence does not hold in $ \L^p$, but we have
  $$\Es { \left(  \frac{S^{(N)}}{N^{1- \lambda}}  \right)^{1/ \lambda}}  \quad\mathop{\longrightarrow}_{N \rightarrow \infty} \quad  1+1/ \lambda.$$
 For $p> 1/ \lambda$, we have $  \Es { \left(  {S^{(N)}}/{N^{1- \lambda}}  \right)^p} \sim   N^{p \lambda-1}/{\lambda}$ as $N \rightarrow \infty$.
 \item[(ii)] Assume that $ \lambda=1$.  Then
  $ \Es {S^{(N)}}  \rightarrow  1+ \Es{G}=2$ as $N \rightarrow \infty$
  and, for $p>1$, $ \Es{ \left(S^{(N)} \right)^p } \sim N^{p-1}$ as $N \rightarrow \infty$.
  \item[(iii)] Assume that $ \lambda>1$.  Then $ \Es { \left(S^{(N)} \right) ^p}  \sim N^{p-1}/ \lambda$ for $p \geq 1$ as $N \rightarrow \infty$. In particular, $\Es {S^{(N)}}  \rightarrow 1/ \lambda$ as $N \rightarrow \infty$.
  \end{enumerate}
\end {thm}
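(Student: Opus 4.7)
The plan is to split $\Es{(S^{(N)})^p}$ into a \emph{boundary} contribution from the single very-early-extinction event $A_N = \{\rho(1) < \sigma_N(1)\}$ and a \emph{bulk} contribution handled by the Yule--Poisson coupling of Section~\ref{sec:Yule}. By \eqref{eq:t1} one has $\Pr{A_N} = 1/(\lambda N + 1)$, and on $A_N$ the process terminates immediately with $S^{(N)} = N$, so this event contributes exactly $N^p/(\lambda N + 1) \sim N^{p-1}/\lambda$ (or $\sim N^{p-1}$ for $\lambda = 1$) to $\Es{(S^{(N)})^p}$. The main point of the proof is that, apart from the bulk-dominated regime $\lambda \in (0,1)$ with $p < 1/\lambda$, this single event already produces the leading-order asymptotics.

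For the bulk, I would observe that on $A_N^c$, combining Lemma~\ref{lem:tech2}, Proposition~\ref{prop:ext}, and \eqref{eq:pr} gives the sandwich $\overline{Z}_{\sigma_N(N)-\rho(N)} \leq S^{(N)} \leq \overline{Z}_{\sigma_N(N)-\rho(N)+1/\ln N}$ off an exceptional event of probability $O(1/N^2)$, on which the crude bound $S^{(N)} \leq N$ contributes only $O(N^{p-2})$. By \eqref{eq:Zu}--\eqref{eq:Zu0} we have $\overline{Z}_{\sigma_N(N)-\rho(N)+u} = \overline{\mathcal{P}}_{s_N(u)} + 1$ with $s_N(u) = (\overline{\mathcal{E}}+\overline{\tau}_N)\mathcal{E}^\lambda(\mathcal{E}+\tau_N)^{-\lambda} e^{\lambda u} - \overline{\mathcal{E}}$. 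Standard Poisson moment bounds, the $\L^q$-convergence $(\overline{\mathcal{E}}+\overline{\tau}_N)/N \to 1$, the independence of $(\overline{\mathcal{E}},\overline{\tau}_N)$ from $(\mathcal{E},\tau_N)$, and the beta--gamma identity
\[
\Es{\bigl(\mathcal{E}/(\mathcal{E}+\tau_N)\bigr)^{\lambda p}} \;=\; \frac{\Gamma(1+\lambda p)\,\Gamma(N+1)}{\Gamma(N+1+\lambda p)} \;\sim\; \Gamma(1+\lambda p)\, N^{-\lambda p}
\]
then combine to give $\Es{(\overline{Z}_{\sigma_N(N)-\rho(N)+u}/N^{1-\lambda})^p} \to \Gamma(1+\lambda p)$, uniformly in $u \in [0, 1/\ln N]$; so the bulk contribution to $\Es{(S^{(N)})^p}$ is $\sim \Gamma(1+\lambda p)\, N^{(1-\lambda)p}$.

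The theorem then follows by comparing the two pieces. For $\lambda \in (0,1)$, the bulk $\Gamma(1+\lambda p)\, N^{(1-\lambda)p}$ and boundary $N^{p-1}/\lambda$ yield the three sub-regimes of part~(i): bulk-dominated for $p < 1/\lambda$, balanced at $p = 1/\lambda$ with total $\Gamma(2) + 1/\lambda = 1 + 1/\lambda$, and boundary-dominated for $p > 1/\lambda$; the $\L^p$ convergence for $p < 1/\lambda$ follows from moment convergence together with Theorem~\ref{thm:stateS1}(i). For $\lambda = 1$, both pieces are of the same polynomial order: at $p = 1$ they give $\Es{G} = 1$ (bulk) and $N/(N+1) \to 1$ (boundary), summing to $2$, and for $p > 1$ the boundary $\sim N^{p-1}$ dominates. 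For $\lambda > 1$, a direct path-counting argument using the transition rates of $(\mathcal{S}_N, \mathcal{R})$ shows $\Pr{S^{(N)} = N - k} = O(N^{-k-1})$ for each $k \geq 0$, while the bulk contribution is further controlled by noting that $\Pr{\sigma_N(N) > \rho(N)}$ decays super-polynomially (by \eqref{eq:cv}, this event forces $\mathcal{E} \gtrsim N^{1-1/\lambda}$); hence only the $A_N$ term survives.

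The main technical obstacle is controlling the $1/\ln N$ slack in the upper sandwich; concretely one must show
\[
\Es{\bigl(\overline{Z}_{\sigma_N(N)-\rho(N)+1/\ln N} - \overline{Z}_{\sigma_N(N)-\rho(N)}\bigr)^p} \;=\; o\bigl(N^{(1-\lambda)p}\bigr),
\]
which by the stationary increments of $\overline{\mathcal{P}}$ reduces to a Poisson moment estimate at the random parameter $s_N(0)\,(e^{\lambda/\ln N} - 1) = O(N^{1-\lambda}/\ln N)$. A secondary subtlety is to obtain the sharp constant $\Gamma(1+\lambda p)$ and not merely an order of magnitude: this requires tracking the joint law of $(\mathcal{E}, \tau_N, \overline{\mathcal{E}}, \overline{\tau}_N)$ through the Poisson representation carefully enough that the scaled moments of $s_N(u)$ actually converge.
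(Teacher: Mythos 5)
Your decomposition is exactly the one the paper uses: split off the event $A_N=\{\rho(1)<\sigma_N(1)\}={}^c E^N_1$, control an $O(N^{-2})$-probability exceptional set via Lemma~\ref{lem:tech2}, and handle the remaining ``bulk'' through the sandwich $\overline{Z}_{\sigma_N(N)-\rho(N)}\leq S^{(N)}\leq\overline{Z}_{\sigma_N(N)-\rho(N)+1/\ln N}$ and the Poisson representation. For $\lambda\in(0,1]$ you execute it correctly and the beta--gamma identity $\Es{(\mathcal{E}/(\mathcal{E}+\tau_N))^{\lambda p}}=\Gamma(1+\lambda p)\Gamma(N+1)/\Gamma(N+1+\lambda p)$ is a genuinely cleaner route to the constant $\Gamma(1+\lambda p)$ than the paper's domination/Fatou argument; the comparison of exponents $N^{(1-\lambda)p}$ (bulk) versus $N^{p-1}/\lambda$ (boundary) reproduces all three sub-regimes of part~(i).

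The $\lambda>1$ case, however, contains a real gap. Your claim that $\Pr{\sigma_N(N)>\rho(N)}$ decays super-polynomially is false. By \eqref{eq:cv}, $\sigma_N(N)>\rho(N)$ is asymptotically the event $\mathcal{E}^\lambda/\overline{\mathcal{E}}\gtrsim N^{\lambda-1}$, and since $\overline{\mathcal{E}}$ is an independent $\Exp(1)$ with $\Pr{\overline{\mathcal{E}}<\epsilon}\approx\epsilon$, this event can be realized at polynomial cost by letting $\overline{\mathcal{E}}$ be of order $N^{1-\lambda}$ rather than by forcing $\mathcal{E}$ to be of order $N^{1-1/\lambda}$; the resulting decay is $\Pr{\sigma_N(N)>\rho(N)}\asymp N^{1-\lambda}$, which is exactly what the analogue Corollary~\ref{cor:UV} would predict. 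With only polynomial decay, the crude estimate $\Es{(S^{(N)})^p\mathbbm{1}_{\text{bulk}}}\leq N^p\Pr{\sigma_N(N)>\rho(N)}\asymp N^{p+1-\lambda}$ does \emph{not} give $o(N^{p-1})$ for $1<\lambda<2$, so your argument does not close. The path-counting estimate $\Pr{S^{(N)}=N-k}=O(N^{-k-1})$ is also insufficient as stated: the implicit constant grows super-exponentially in $k$ (Catalan number times $(k+1)!$ over $\lambda^{k+1}$, roughly), and for $1<\lambda<2$ the total mass $\Pr{0<S^{(N)}<N}\asymp N^{1-\lambda}$ is actually much larger than the $O(N^{-2})$ that a uniform-in-$k$ version of your bound would give, so the bound cannot hold uniformly. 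What is actually needed, and what the paper supplies, is a pointwise bound on $S^{(N)}$ itself on the bulk event: via the coupling, $S^{(N)}\leq\overline{\mathcal{P}}_{s_N(1/\ln N)}+1$ with $s_N(1/\ln N)\lesssim\mathcal{E}^\lambda(\overline{\mathcal{E}}+\overline{\tau}_N)/\tau_N^\lambda\asymp N^{1-\lambda}\mathcal{E}^\lambda$, and then Lemma~\ref{lem:dev}~(v) shows the $p$-th moment of this is $O(N^{(1-\lambda)p})+O(N^{1-\lambda})\to 0$. You need this kind of quantitative control on the size of $S^{(N)}$ on the bulk event, not merely on its probability.
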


\noindent We make several comments on these results:
\begin{enumerate}
 \item[(i)] When $ \lambda \in (0,1)$, the discrepancy concerning the $ \L^p$ convergence for $p=1/ \lambda$ happens because the event that the infected individual starts by recovering happens with probability $1/( \lambda N+1)$ (in which case the process terminates with $N$ susceptible individuals), and gives a non-negligible 
 contribution to $S^{(N)}/N^{1- \lambda}$ (in the $ \L^{p}$ sense) as soon as $p \geq  1/ \lambda$. 
\item[(ii)] When $ \lambda \geq 1$, Theorem \ref {thm:stateS2} implies that the convergences in distribution appearing in Theorem \ref {thm:stateS1} (ii) and (iii) never hold in $ \L^{p}$ when $p \geq 1$. When $ \lambda >1$, the fact that $S^{(N)}$ converges in probability to $0$ but not in $ \L^{p}$ is explained by the fact that even if $ \Pr {S^{(N)}>0}$ tends to $0$ as $N \rightarrow \infty$,  on the event $S^{(N)}>0$, $S^{(N)}$ is typically not $o(1/\Pr {S^{(N)}>0})$.
   \end{enumerate}

\begin{thm}[Number of remaining recovered individuals]\label{thm:stateR2} The following assertions hold.\begin{enumerate}
\item[(i)] Assume that $ \lambda \in (0,1)$. Then as $N \rightarrow \infty$,
 $$ N-\Es{R^{(N)}}  \quad \begin {cases} \quad \sim  \quad   \Gamma( \lambda+1) \cdot N^{1- \lambda}  & \textrm{ if } \quad  0< \lambda< \frac{\sqrt {5}-1}{2}\\
 \quad   \sim \quad \left( \frac{1}{2} \Gamma(1+1/ \lambda)+\Gamma( \lambda+1) \right)  \cdot  N^{(3- \sqrt {5})/2} & \textrm{ if } \quad \lambda=\frac{\sqrt {5}-1}{2} \\
\quad  \sim \quad   \frac{1}{2} \Gamma(1+1/ \lambda) \cdot N^{2-1/ \lambda} & \textrm{ if } \quad  \frac{\sqrt {5}-1}{2} < \lambda <1. \end{cases}$$
In particular, the convergence \eqref{eq:R1} holds in $ \L^{1}$ if and only if $ \lambda  \in (0, ( \sqrt {5}-1)/{2})$. 
\item[(ii)] Assume that $ \lambda=1$.  Then  the convergence \eqref{eq:R12} holds in $ \L^p$ for every $p \geq 1$. In particular,
$$ \Es{R^{(N)}}  \quad\mathop{\sim}_{N \rightarrow \infty} \quad  \ln(2) \cdot N.$$
\item[(iii)]
Assume that $ \lambda>1$. The convergence \eqref{eq:cvd1} holds in $ \L^p$ if and only if $1 \leq p< \lambda$. In particular,
  $$  \Es{R^{(N)}}  \quad\mathop{\sim}_{N \rightarrow \infty} \quad   \frac{1}{ \Gamma(1-1/ \lambda)} \cdot N^{1/\lambda}.$$
\end {enumerate}
\end {thm}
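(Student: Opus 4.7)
The guiding identity is $N - R^{(N)} = S^{(N)} + I^{(N)} - 2$ (from $S^{(N)}+I^{(N)}+R^{(N)}=N+2$), and the three regimes reduce to moment estimates on $S^{(N)}$, $I^{(N)}$, and $\mathcal{R}(\sigma_N(N))$, all computable via the Yule/Poisson coupling of Section 2.2 together with the bounds of Theorem \ref{thm:stateS2}. Part (ii), $\lambda=1$, is essentially immediate: since $0\le R^{(N)}\le N+2$, the ratio $R^{(N)}/N$ is bounded by $1+2/N$, so the distributional convergence \eqref{eq:R12} upgrades automatically to $L^p$ convergence for every $p\ge 1$ by dominated convergence. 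The constant $\ln 2$ follows from integrating the limit measure: the atom at $1$ contributes $1/2$, and the decomposition $x/(1+x)^2 = 1/(1+x)-1/(1+x)^2$ gives $\int_0^1 x/(1+x)^2\,dx = \ln 2 - 1/2$, summing to $\ln 2$.

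For part (iii), $\lambda>1$: Proposition \ref{prop:ext}(iv) gives $R^{(N)}\le \mathcal{R}(\sigma_N(N))$ always, and parts (i)--(ii) of the same Proposition combined with Theorem \ref{thm:critical} yield $R^{(N)}=\mathcal{R}(\sigma_N(N))$ on $E^N_{ext}$, whose probability tends to $1$. It suffices then to estimate the $p$-th moments of $\mathcal{R}(\sigma_N(N))/N^{1/\lambda}$. Using the Kendall representation $\mathcal{R}(t)=\mathcal{P}_{\mathcal{E}(e^t-1)}+1$ and the identity $e^{\sigma_N(N)}=(1+\overline{\tau}_N/\overline{\mathcal{E}})^{1/\lambda}$, a Yule moment bound of the form $\Es{\mathcal{R}(s)^p}\le C_p e^{ps}$ (easy for integer $p$ by the explicit polynomial expression, extended to real $p$ by H\"older from neighbouring integers, or derived from $e^{-s}\mathcal{R}(s)\to\Exp(1)$) reduces the problem to showing $\Es{((\overline{\mathcal{E}}+\overline{\tau}_N)/(\overline{\mathcal{E}}N))^{p/\lambda}}\to \Es{\overline{\mathcal{E}}^{-p/\lambda}}=\Gamma(1-p/\lambda)$ by dominated convergence (using $\overline{\tau}_N/N\to 1$ a.s.), which is finite precisely when $p<\lambda$. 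Picking $q\in(p,\lambda)$ gives uniform integrability of $(\mathcal{R}(\sigma_N(N))/N^{1/\lambda})^p$ and hence, combined with \eqref{eq:cvd1}, $L^p$ convergence for every $p<\lambda$. For $p\ge\lambda$ the tail $\Pr{\Exp(\overline{\mathcal{E}}^{1/\lambda})>u}\sim \Gamma(\lambda+1)u^{-\lambda}$ (see the remark after Theorem \ref{thm:stateR1}) shows the limit has no $p$-th moment, ruling out $L^p$ convergence.

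For part (i), $\lambda\in(0,1)$: decompose $N-\Es{R^{(N)}}=\Es{S^{(N)}}+\Es{I^{(N)}}-2$. Theorem \ref{thm:stateS2}(i) gives $\Es{S^{(N)}}\sim \Gamma(\lambda+1)N^{1-\lambda}$. For $\Es{I^{(N)}}$, note that $I^{(N)}=0$ off $E^N_{ext}$ while on $E^N_{ext}$ we have $I^{(N)}=N+2-\mathcal{R}(\sigma_N(N))$. Using Lemma \ref{lem:tech1} to replace $E^N_{ext}$ by $\{\sigma_N(N)<\rho(N)\}$ up to a negligible error and the representations $\sigma_N(N)=\lambda^{-1}\ln(1+\overline{\tau}_N/\overline{\mathcal{E}})$, $\rho(N)=\ln(1+\tau_N/\mathcal{E})$, the relevant event becomes approximately $\{\mathcal{E}<\overline{\mathcal{E}}^{1/\lambda}N^{1-1/\lambda}\}$. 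Conditioning on $\overline{\mathcal{E}}$ and Taylor-expanding $1-e^{-\overline{\mathcal{E}}^{1/\lambda}N^{1-1/\lambda}}$ and $\int_0^{\overline{\mathcal{E}}^{1/\lambda}N^{1-1/\lambda}}xe^{-x}\,dx$ to leading order in the small parameter $N^{1-1/\lambda}$ yields $N\Pr{E^N_{ext}}\sim \Gamma(1+1/\lambda)N^{2-1/\lambda}$ and $\Es{\mathcal{R}(\sigma_N(N))\mathbbm{1}_{E^N_{ext}}}\sim \tfrac12\Gamma(1+1/\lambda)N^{2-1/\lambda}$, whose difference gives $\Es{I^{(N)}}\sim \tfrac12\Gamma(1+1/\lambda)N^{2-1/\lambda}$. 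The three regimes then arise by comparing the exponents $1-\lambda$ and $2-1/\lambda$; their difference $(1-\lambda-\lambda^2)/\lambda$ vanishes at $\lambda=(\sqrt5-1)/2$, is positive below and negative above. The $L^1$ statement of \eqref{eq:R1} is equivalent to $\Es{I^{(N)}}/N^{1-\lambda}\to 0$, i.e.\ to $\lambda<(\sqrt5-1)/2$.

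The principal obstacle is the explicit asymptotic of $\Es{I^{(N)}}$ in part (i): the event $E^N_{ext}$ has probability $\asymp N^{1-1/\lambda}\to 0$, so the expectation is extracted from a careful Taylor expansion in the small parameter $N^{1-1/\lambda}$ conditional on $\overline{\mathcal{E}}$, and the factor $1/2$ reflects a precise cancellation between the two leading-order terms $N\Pr{E^N_{ext}}$ and $\Es{\mathcal{R}(\sigma_N(N))\mathbbm{1}_{E^N_{ext}}}$. A secondary difficulty, in part (iii), is controlling moments of $\mathcal{R}(\sigma_N(N))$ sharply right up to $p<\lambda$, which requires balancing the Yule moment growth $e^{ps}$ against the integrability threshold of $\overline{\mathcal{E}}^{-p/\lambda}$; the coincidence of these two thresholds is exactly what makes the bound tight.
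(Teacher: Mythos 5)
Your parts (ii) and (iii) track the paper's argument. In (ii) the domination $0\le R^{(N)}/N\le 1+2/N$ plus distributional convergence is exactly the paper's proof, and your computation of $\ln 2$ is correct. In (iii) the paper also bounds $\Es{\mathcal{R}(\sigma_N(N))^r}$ via the Kendall representation and $\Es{(1+\mathcal{P}_s)^r}\le c_r(1+s^r)$ (Lemma \ref{lem:dev}(v)), arriving as you do at the finiteness of $\Es{\overline{\mathcal{E}}^{-r/\lambda}}$ precisely for $r<\lambda$; your phrasing via $\Es{\mathcal{R}(s)^p}\le C_pe^{ps}$ is a cosmetic variant. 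Your decomposition in (i), $N-\Es{R^{(N)}}=\Es{S^{(N)}}+\Es{I^{(N)}}-2$ with $\Es{S^{(N)}}$ supplied by Theorem \ref{thm:stateS2}(i), is also the paper's route.

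The gap is in your treatment of $\Es{I^{(N)}}$. You write ``Using Lemma \ref{lem:tech1} to replace $E^N_{ext}$ by $\{\sigma_N(N)<\rho(N)\}$ up to a negligible error'', but Lemma \ref{lem:tech1} is a pure convergence-to-zero statement with no rate, whereas what you need is that the symmetric difference has probability $o(N^{1-1/\lambda})$ --- the same order as $\Pr{E^N_{ext}}$ itself. A qualitative $o(1)$ error here is useless because it could swamp the entire quantity you are trying to estimate. The paper does not make this replacement; instead it introduces $E^N_1=\{\rho(1)>\sigma_N(1)\}$ and $E^N_5=\{\sigma_N(N)\le\rho(N)-\ln(N)^{-1}\}$, invokes the \emph{quantitative} Lemma \ref{lem:tech2} (the error is bounded by $C/N^2$, which is $o(N^{1-1/\lambda})$ for all $\lambda\in(0,1)$), Lemma \ref{lem:techn} (an $o(N^{1-1/\lambda})$ bound for the ``first recovery precedes first infection'' event intersected with $\{\sigma_N(N)<\rho(N)\}$), Lemma \ref{lem:UV}(iii) (controlling the boundary window $\rho(N)-\ln(N)^{-1}\le\sigma_N(N)\le\rho(N)$), and Corollary \ref{cor:UV}. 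The constant $\frac12$ is extracted not by a Taylor expansion of survival functions conditioned on $\overline{\mathcal{E}}$, but through a time-reversal of the Poisson process ($(\mathcal{P}_t)_{0\le t\le\tau_N}\stackrel{(d)}{=}(N-1-\mathcal{P}_{(\tau_N-t)-})_{0\le t\le\tau_N}$) which converts $N+2-\mathcal{R}(\sigma_N(N))$ into $\mathcal{P}_{\tau_N-\mathcal{E}(e^{\sigma_N(N)}-1)}$, after which Lemma \ref{lem:UV}(i)--(ii) (the $\Gamma(1+1/\lambda)N^{1-1/\lambda}$ tail and the uniform-limit conditional law of the overshoot, \emph{including its $L^1$ convergence}) deliver the factor $\frac12$ as a genuine conditional expectation. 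Your sketch names the correct answer and acknowledges the difficulty, but invokes a lemma that cannot do the job and leaves the two leading-order terms and their cancellation as heuristics rather than estimates.
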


  \begin{thm}[Outbreak sizes]\label{thm:stateI2} The following assertions hold.\begin{enumerate}
\item[(i)] If $ \lambda \in (0,1)$, as $N \rightarrow \infty$,
 $$ \Es{I^{(N)}}  \quad \begin {cases} \quad \rightarrow  \quad   0  & \textrm{ if } \quad  0< \lambda<1/2\\
 \quad   \rightarrow \quad 1 & \textrm{ if } \quad \lambda=1/2 \\
\quad  \sim \quad  \frac{1}{2} \Gamma(1+1/ \lambda) \cdot N^{2-1/ \lambda} & \textrm{ if } \quad  1/2 < \lambda <1. \end{cases}$$
\item[(ii)] If $ \lambda=1$, then the convergence \eqref{eq:I1} holds in $ \L^{p}$ for every $p \geq 1$.  In particular, $$ \Es {I^{(N)}} \mathop{\sim}_{N \rightarrow \infty} (1- \ln(2)) \cdot N.$$
\item[(iii)] If $ \lambda>1$, the convergence \eqref{eq:I2} holds in $ \L^{p}$ if and only if $1 \leq p <  \lambda$. In particular,
$$ N-\Es {I^{(N)}}= \frac{1}{ \Gamma(1- 1/ \lambda)} \cdot N^{1/ \lambda}+o(N^{1/ \lambda}).$$
\end {enumerate}
\end {thm}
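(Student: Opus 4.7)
The proof rests on the identity $I^{(N)} = N + 2 - S^{(N)} - R^{(N)}$, which transfers the asymptotic analysis of $I^{(N)}$ to that of $S^{(N)}$ and $R^{(N)}$, already treated in Theorems~\ref{thm:stateS2} and~\ref{thm:stateR2}.

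For part~(ii) ($\lambda = 1$), write $I^{(N)}/N = 1 + 2/N - S^{(N)}/N - R^{(N)}/N$. Theorem~\ref{thm:stateS2}(ii) gives $\Es{(S^{(N)})^p} = O(N^{p-1})$, hence $S^{(N)}/N \to 0$ in $\L^p$ for every $p \geq 1$; combined with the $\L^p$-convergence of $R^{(N)}/N$ from Theorem~\ref{thm:stateR2}(ii), this yields $I^{(N)}/N \to 1 - R_\infty$ in $\L^p$ for every $p \geq 1$, where $R_\infty$ has the mixture law of Theorem~\ref{thm:stateR1}(ii). The change of variables $y = 1 - x$ turns the absolutely continuous density $(1+x)^{-2}$ of $R_\infty$ into $(2-y)^{-2}$ and its Dirac at $1$ into an (implicit) Dirac at $0$; the value $(1-\ln 2)N$ for the mean then comes from $\Es{R_\infty} = \tfrac{1}{2} + \int_0^1 x\,(1+x)^{-2}\,dx = \ln 2$.

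For part~(iii) ($\lambda > 1$), Proposition~\ref{prop:ext} yields the pathwise decomposition
\[
\frac{N - I^{(N)}}{N^{1/\lambda}} \;=\; \mathbbm{1}_{E^{N}_{ext}} \cdot \frac{R^{(N)} - 2}{N^{1/\lambda}} \;+\; \mathbbm{1}_{{}^{c} E^{N}_{ext}} \cdot N^{1-1/\lambda}.
\]
Since $\Pr{{}^{c} E^{N}_{ext}} \to 0$ and $R^{(N)}/N^{1/\lambda}$ is bounded in $\L^{p'}$ for some $p < p' < \lambda$ (Theorem~\ref{thm:stateR2}(iii)), H\"older's inequality shows that the first summand has the same $\L^p$-limit as $R^{(N)}/N^{1/\lambda}$ whenever $p < \lambda$. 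For the second summand, a sharpening of the argument of Theorem~\ref{thm:critical}, applied to the explicit formulas $\lambda{\sigma}_{N}(N) = \ln(1+\overline{\tau}_N/\overline{\mathcal{E}})$ and $\rho(N) = \ln(1+\tau_N/\mathcal{E})$ together with the Laplace-type estimate $\Es{e^{-a\overline{\mathcal{E}}^{1/\lambda}}} \sim \Gamma(\lambda+1)\,a^{-\lambda}$ as $a \to \infty$, gives $\Pr{{}^{c} E^{N}_{ext}} \sim \Gamma(\lambda+1)\,N^{-(\lambda-1)}$. Consequently $\Es{(\mathbbm{1}_{{}^{c} E^{N}_{ext}} N^{1-1/\lambda})^p} \sim \Gamma(\lambda+1)\,N^{(\lambda-1)(p-\lambda)/\lambda}$, which vanishes for $p < \lambda$; combined with the first step this proves the $\L^p$-convergence of $(N - I^{(N)})/N^{1/\lambda}$ for $p < \lambda$. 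For $p \geq \lambda$, the tail bound $\Pr{\Exp(\Exp(1)^{1/\lambda}) > u} \sim \Gamma(\lambda+1) u^{-\lambda}$ forces the limit to have infinite $p$-th moment, ruling out $\L^p$-convergence. Taking $p = 1$ and invoking Theorem~\ref{thm:stateR2}(iii) yields the asymptotic for $N - \Es{I^{(N)}}$.

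For part~(i) ($\lambda \in (0,1)$), write $\Es{I^{(N)}} = 2 + (N - \Es{R^{(N)}}) - \Es{S^{(N)}}$. In the range $(\sqrt{5}-1)/2 < \lambda < 1$, Theorem~\ref{thm:stateR2}(i) gives $N - \Es{R^{(N)}} \sim \tfrac12\Gamma(1+1/\lambda)\,N^{2-1/\lambda}$, which strictly dominates $\Es{S^{(N)}} \sim \Gamma(\lambda+1)\,N^{1-\lambda}$ (Theorem~\ref{thm:stateS2}(i)), yielding the claimed asymptotic. In the delicate range $0 < \lambda \leq (\sqrt{5}-1)/2$, the leading terms of $\Es{S^{(N)}}$ and $N - \Es{R^{(N)}}$ are \emph{both} equal to $\Gamma(\lambda+1)\,N^{1-\lambda}$ and cancel, so my plan is to reopen the Poisson-coupling computations used to prove Theorems~\ref{thm:stateS2}(i) and~\ref{thm:stateR2}(i) and extract the two-term expansions
\[
N - \Es{R^{(N)}} = \Gamma(\lambda+1)\,N^{1-\lambda} + \tfrac12\Gamma(1+1/\lambda)\,N^{2-1/\lambda} + o\!\bigl(\max(1,N^{2-1/\lambda})\bigr),
\]
\[
\Es{S^{(N)}} = \Gamma(\lambda+1)\,N^{1-\lambda} + o\!\bigl(\max(1,N^{2-1/\lambda})\bigr).
\]
Subtracting gives the asymptotic of $\Es{I^{(N)}}$ in all three subcases: at $\lambda = 1/2$ the sub-leading term equals $\tfrac12\Gamma(3) = 1$, and for $\lambda < 1/2$ the term $N^{2-1/\lambda}$ tends to $0$. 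The main technical obstacle is pinning down the correct sub-leading constant $\tfrac12\Gamma(1+1/\lambda)$ in the expansion of $N - \Es{R^{(N)}}$, which requires a second-order Poisson moment computation beyond the leading-order $\L^p$-estimates that suffice for parts~(ii) and~(iii).
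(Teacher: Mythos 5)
Parts (ii) and (iii) are essentially sound and match the paper's route (the paper derives (ii) and (iii) from Theorems~\ref{thm:stateS2} and~\ref{thm:stateR2} via $I^{(N)}=N+2-S^{(N)}-R^{(N)}$). Your treatment of (iii) is in fact more careful than the paper's one-line ``simple consequence'': for $\lambda>2$ the naive estimate $\Es{(S^{(N)}/N^{1/\lambda})^p}\sim N^{p(1-1/\lambda)-1}/\lambda$ blows up on the range $\lambda/(\lambda-1)<p<\lambda$, so the event-wise decomposition over $E^{N}_{ext}$ and ${}^{c}E^{N}_{ext}$, plus the asymptotic $\Pr{{}^{c}E^{N}_{ext}}\sim\Gamma(\lambda+1)N^{-(\lambda-1)}$ (which is correct, though not stated in the paper), is genuinely needed, and you supply it.

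Part (i) has two serious problems. First, it is circular: the paper explicitly announces, just before Section~\ref{sec:S2}, that Theorem~\ref{thm:stateR2}(i) is \emph{derived} from Theorems~\ref{thm:stateS2}(i) and~\ref{thm:stateI2}(i), and indeed the proof of~\ref{thm:stateR2}(i) quotes~\ref{thm:stateI2}(i). So you cannot invoke~\ref{thm:stateR2}(i) to prove~\ref{thm:stateI2}(i) in either the regime $(\sqrt5-1)/2<\lambda<1$ or the delicate regime. Second, in the delicate range the two-term expansions you propose are wrong: subtracting them yields $\Es{I^{(N)}}=2+\tfrac12\Gamma(1+1/\lambda)N^{2-1/\lambda}+o(\max(1,N^{2-1/\lambda}))$, and for $\lambda<1/2$ the exponent $2-1/\lambda$ is negative, so this says $\Es{I^{(N)}}\to 2$, contradicting the asserted limit $0$. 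For the difference to produce the correct answer, the implicit $O(1)$ constants in the expansions of $\Es{S^{(N)}}$ and $N-\Es{R^{(N)}}$ would have to cancel against the $+2$, and nothing in the proposal establishes (or even acknowledges) this. The paper avoids all of this by computing $\Es{I^{(N)}}$ directly via the Poisson coupling: it splits $\Es{I^{(N)}}$ over the events $\{\rho(N)-\ln(N)^{-1}\le\sigma_N(N)\le\rho(N)\}$ and $E^N_1\cap E^N_5$, reduces to $\Es{\mathcal{P}_{b_N X_N}\mathbbm{1}_{\{X_N\ge0\}}}=b_N\Es{X_N\mid X_N\ge0}\Pr{X_N\ge0}$ with $X_N=1+\E/b_N-(\E/b_N)(1+a_N/\oE)^{1/\lambda}$, and then uses Lemma~\ref{lem:UV}(i)--(ii) (the conditional uniform limit) to obtain the single uniform asymptotic $\Es{I^{(N)}}\sim\tfrac12\Gamma(1+1/\lambda)N^{2-1/\lambda}$, which specializes to all three subcases. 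That is the missing core of part (i).
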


By using the relation $S^{(N)}+I^{(N)}+R^{(N)}=N+2$, we shall establish Theorem \ref{thm:stateS2} (ii) \& (iii) and Theorem \ref{thm:stateR2} (ii) \& (iii) to get Theorem  \ref{thm:stateI2} (ii) \& (iii). However, we prove  Theorem \ref{thm:stateS2} (i) and Theorem \ref{thm:stateI2} (i) to get Theorem  \ref{thm:stateR2} (i).

 \subsection {Large deviations}
 
 Here we gather several lemmas involving large deviations estimates  which will be useful later.
 
 \begin{defn}
 Let $\epsilon>0$. We say that a sequence of positive numbers $(x_n)$ is $oe_{\epsilon}(n)$ if there
exist positive constants $c,C >0$ such that $x_n \leq
C e^{-cn^{\epsilon}}$ for every $n \geq 1$, in which case we write
$x_n=oe_\epsilon(n)$. We write $x_{n}=oe(n)$ if there exists $ \epsilon>0$ such that $x_n=oe_\epsilon(n)$.
\end{defn}

\begin {lem}\label {lem:dev}The following assertions hold.
\begin{enumerate}
\item[(i)] For every $N \geq 1$ and $x \geq 1$, we have $ \Pr { \tau_{N}/N \geq x} \leq\exp(-N(x-1- \ln(x)))$, and for every $N \geq 1$ and $ x \in (0,1)$ we have $ \Pr { \tau_{N}/N \leq  x} \leq\exp(-N(x-1- \ln(x)))$
\item[(ii)] We have $\Pr { |\tau_{N}-N|>N^{3/4}}=oe(N)$.
\item[(iii)] For every $ \eta>0$, there exists a constant $C>0$ such that $ \Pr { \mathcal{P}_{(1+N^{- \eta})i}<i } \leq  \exp (- C i N^{- 2 \eta})$ for every $i,N \geq 1$.
\item[(iv)] For every $ \eta \in (0,1)$, there exists a constant $C>0$ such that $\Pr { \tau_{N} \leq N^{ \eta} \mathcal{E} } \leq \exp(-C N^{1- \eta})$ for every $N \geq 1$.
\item[(v)] For every $r \geq 1$, there exists a constant $c_{r}>0$ such that $ \Es { (1+\mathcal{P}_{s} )^{r} } \leq c_{r}(1+s^{r})$ for every $s \geq 0$.
\end{enumerate}

\end {lem}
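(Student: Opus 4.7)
Each of the five parts is a standard large-deviation estimate and I would treat them essentially independently. For (i), I would apply the Chernoff method to $\tau_N \sim \Gamma(N,1)$. Using the moment generating function $\Es{e^{\theta \tau_N}} = (1-\theta)^{-N}$ for $\theta < 1$, Markov's inequality yields $\Pr{\tau_N \geq Nx} \leq e^{-\theta Nx}(1-\theta)^{-N}$, and optimizing at $\theta = 1 - 1/x$ (admissible when $x \geq 1$) gives exactly $\exp(-N(x - 1 - \ln x))$. The lower tail is symmetric: from $\Es{e^{-\theta \tau_N}} = (1+\theta)^{-N}$ for $\theta > 0$, the optimal choice $\theta = 1/x - 1 > 0$ for $x \in (0,1)$ produces the same bound. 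Part (ii) is then an immediate consequence of (i) with $x = 1 \pm N^{-1/4}$: the Taylor expansion $x - 1 - \ln x = \tfrac{1}{2}(x-1)^2 + O((x-1)^3)$ near $x = 1$ shows that $N(x - 1 - \ln x)$ is of order $N^{1/2}$, so both tails decay as $\exp(-c N^{1/2})$, which is $oe_{1/2}(N)$.

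For (iii) I would apply the classical Chernoff bound for the lower tail of a Poisson variable: for $m \leq s$, $\Pr{\mathcal{P}_s \leq m} \leq (s/m)^m e^{m-s}$. Taking $s = (1+N^{-\eta})i$ and the crude bound $\Pr{\mathcal{P}_s < i} \leq \Pr{\mathcal{P}_s \leq i}$ (i.e. $m = i$), the right hand side equals $\exp\bigl(i[\ln(1+N^{-\eta}) - N^{-\eta}]\bigr)$; the elementary inequality $\ln(1+u) - u \leq -u^2/6$ valid on $[0,1]$ then yields the claim with $C = 1/6$, with an easy adjustment of the constant to cover the range where $N^{-\eta}$ may be close to $1$. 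For (iv), I would simply split on the size of $\mathcal{E}$:
\[
\Pr{\tau_N \leq N^\eta \mathcal{E}} \;\leq\; \Pr{\tau_N \leq N/2} + \Pr{\mathcal{E} \geq N^{1-\eta}/2}.
\]
The first term is bounded using (i) with $x = 1/2$, giving $\exp(-N(\ln 2 - 1/2))$, and the second equals $e^{-N^{1-\eta}/2}$. Since $\eta < 1$ one has $N^{1-\eta} \leq N$, the second term dominates, and a bound of the form $\exp(-c N^{1-\eta})$ results.

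Finally, (v) follows from the factorial moment identity $\Es{\mathcal{P}_s(\mathcal{P}_s - 1) \cdots (\mathcal{P}_s - k + 1)} = s^k$, which expresses $\Es{\mathcal{P}_s^r} = \sum_{k=0}^{r} S(r,k)\, s^k$ as a polynomial of degree $r$ in $s$ with nonnegative coefficients (the $S(r,k)$ being Stirling numbers of the second kind). Bounding each $s^k$ by $1 + s^r$ for $k \in \{0,\dots,r\}$ gives $\Es{\mathcal{P}_s^r} \leq C_r(1 + s^r)$, after which the binomial expansion of $(1+\mathcal{P}_s)^r$ yields the desired estimate. None of these assertions is genuinely difficult; the main care needed is simply to keep the constants uniform in the relevant parameters. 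The most delicate step is (iii), where the smallness of $N^{-\eta}$ inside the logarithm must be converted into a quantitative bound valid uniformly in $i, N \geq 1$, but the uniform inequality $\ln(1+u) - u \leq -u^2/6$ on $[0,1]$ takes care of this cleanly.
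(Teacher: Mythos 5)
Your treatment of parts (i), (ii), and (iii) is correct and essentially the same as the paper's, which also proceeds by Markov's exponential inequality (Chernoff bounds). For (iii) the paper writes the exponential Markov inequality explicitly with tilt $\gamma_N=\ln(1+N^{-\eta})$ and then uses $x\ln(1+x^{-1})\leq 1-(4x)^{-1}$ for $x\geq 1$; your route through the closed-form Poisson lower-tail bound and the inequality $\ln(1+u)-u\leq -u^2/6$ on $[0,1]$ is the same computation in a slightly different dress, and it is already uniform in $N\geq 1$ as you observe.

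For (iv) you genuinely diverge from the paper. The paper exploits the fact that $\mathcal{E}$ is $\Exp(1)$ to compute exactly
$\Pr{\tau_N\leq N^{\eta}\mathcal{E}}=\Es{e^{-\tau_N/N^{\eta}}}=\bigl(1-\tfrac{1}{N^{\eta}+1}\bigr)^N$, then applies $1-1/(x+1)\leq e^{-1/(2x)}$. Your union-bound split $\Pr{\tau_N\leq N/2}+\Pr{\mathcal{E}\geq N^{1-\eta}/2}$ is correct, somewhat more robust (it would survive a non-exponential $\mathcal{E}$), but yields a slightly worse constant and needs the usual finite-$N$ patch to absorb the factor $2$. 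Both routes are fine.

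Part (v), however, has a genuine gap. The factorial-moment/Stirling-number expansion $\Es{\mathcal{P}_s^r}=\sum_{k=0}^{r}S(r,k)s^k$ is valid only for \emph{integer} $r$, whereas the lemma is stated for all real $r\geq 1$ and is in fact invoked in the paper for non-integer exponents: in the proof of Theorem~\ref{thm:stateS2} it is used for every $1\leq r<1/\lambda$, and in the proof of Theorem~\ref{thm:stateR2} for every $1\leq r<\lambda$, with $\lambda$ generically irrational. The paper's own argument sidesteps this by the layer-cake identity $\Es{\mathcal{P}_s^r}=\int_0^{\infty}\Pr{\mathcal{P}_s>u^{1/r}}\,du$ together with a Chernoff bound for the Poisson upper tail, which works for any real $r\geq 1$. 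Your argument can be repaired by interpolating: for non-integer $r$, set $m=\lceil r\rceil$, apply Jensen to get $\Es{\mathcal{P}_s^r}\leq\Es{\mathcal{P}_s^{m}}^{r/m}$, then your integer bound and the elementary subadditivity $(1+s^{m})^{r/m}\leq 1+s^{r}$ for $r\leq m$. But as written, the claim does not follow for the values of $r$ actually needed.
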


\begin {proof}
 Since $ \overline{  \tau}_{N}$ is distributed as the sum of $N$ i.i.d.~exponential random variables of parameter $1$, we have  $  \ln (\Es { \theta \overline{  \tau}_{N}})=N \ln( 1/(1- \theta))$, so that by Markov's exponential inequality we get
$$  \forall \  N \geq 1, \forall \ x \geq 1, \qquad \Pr { \tau_{N}/N \geq x} \leq \exp \left( - N  \cdot \sup_{ \theta>0} ( \theta x- \ln(1/(1- \theta))) \right) = \exp(-N(x-1- \ln(x))).$$
One similarly shows that  $ \Pr { \tau_{N}/N \leq  x} \leq\exp(-N(x-1- \ln(x)))$ for $N \geq 1$ and $ x \in (0,1)$. The second assertion easily follows from (i). For (iii), set $ \gamma_{N}=\ln(1+N^{- \eta})$, and for $N$ sufficiently large apply Markov's exponential inequality:
$$ \Pr { \mathcal{P}_{(1+N^{- \eta})i}<i } \leq  e^{  \gamma_{N} i+ (1+N^{- \eta})i (e^{ -\gamma_{N}}-1)} = e^{-iN^{- \eta}(1- N^{ \eta} \ln( 1+ N^{- \eta}))} \leq  e^{-i N^{- 2 \eta}/4}$$
since $x \ln(1+x^{-1}) \leq 1-(4x)^{-1}$ for $x \geq 1$.  For (iv), it suffices to write, for $N$ sufficiently large,
$$ \Pr { \tau_{N} \leq N^{\eta} \mathcal{E} }= \Pr { \mathcal{E} \geq  \tau_{N}/N^{\eta} }= \Es {e^{- \tau_{N}/N^{ \eta}}}= \Es {e^{- \tau_{1}/N^{ \eta}}}^{N}= \left( 1- \frac{1}{N^{ \eta}+1} \right) ^{N} \leq e^{-N^{1- \eta}/2}.$$
Indeed, $ 1-1/(x+1) \leq e^{- 1/(2x)}$ for $x \geq 1$. Finally, for (v),  by convexity of $ x \mapsto x^{r}$ on $ \R_{+}$, it is enough to check that there exists a constant $c_{r}>0$ such that $ \Es { (\mathcal{P}_{s} )^{r} } \leq c_{r} s^{r}$ for every $s \geq 0$. To this end, first observe that $ \Pr {\mathcal{P}_{s}>x} \leq \exp(-(s-x+x \ln(x/s)))$ for $x>s$. This follows from Markov exponential's inequality $ \Pr {\mathcal{P}_{s}>x} \leq \exp(-( \gamma x- s(e^{ \gamma}-1)))$ applied with $ \gamma = \ln(x/s)$.  Then write:
\begin{eqnarray*}
\Es {(\mathcal{P}_{s} )^{r}} &=& \int_{0}^{ \infty} du  \ \Pr {\mathcal{P}_{s}>u^{1/r}}  \leq s^{r}+ \int_{s^{r}}^{ \infty} du  \ \Pr {\mathcal{P}_{s}>u^{1/r}}= s^{r}+ r \int_{s}^{ \infty} du \ \Pr {\mathcal{P}_{s}>u} u^{r-1} \\
& \leq &s^{r}+ r  \int_{s}^{ \infty} du\ e^{-(s-u+u \ln(u/s))} u^{r-1}=s^{r}+ rs^{r}  \int_{1}^{ \infty} du\ e^{-s( 1+u \ln(u)-u)} u^{r-1}.
\end{eqnarray*}
The conclusion follows since $1+u \ln(u)-u \geq 0$ for $u \geq 1$, so that $e^{-s( 1+u \ln(u)-u)}$ is decreasing in $s$ for $u \geq 1$.  This completes the proof.
 \end {proof}

\subsection {Susceptible vertices in the final state}
\label {sec:S2}

\begin {proof}[Proof of Theorem \ref{thm:stateS2}]To simplify notation, let $E^N_{1}, E^N_{2}, E^N_{3}$ be the three events defined as follows: $ E^N_{1}=  \{\rho(1)>{\sigma}_{N}(1)\}$, 
$$ E^N_{2}=  \{\overline{ \mathcal{S}}_{N}(t) \geq  \mathcal{R}(t)-1\textrm{ for } t \in [\rho(2), \min( {\sigma}_{N}(N),\rho(N)-\ln(N)^{-1})]\}, \quad E^N_{3}= \{ {\sigma}_{N}(N) \geq \rho(N)-\ln(N)^{-1}\}.$$
Notice that $E^N_{1} \cap E^N_{2} \cap {}^{c}E^N_{3} \subset E^N_{ext}$, so that $S^{(N)}=0$ on the latter event. Therefore we have
$$ \Es {F(S^{(N)})}= \Es{F(S^{(N)})\mathbbm{1}_{  {}^c E^N_{1}}}+\Es{F(S^{(N)}) \mathbbm{1}_{   E^N_{1} \cap {}^c  E^N_{2}}}+\Es{F(S^{(N)}) \mathbbm{1}_{   E^N_{1} \cap E^N_{2} \cap E^N_{3}}}$$
for every measurable function $F: \R \rightarrow \R_{+}$ such that $F(0)=0$.

Now assume that $0 < \lambda <1$ and fix $1 \leq r< 1/ \lambda$. First, by \eqref{eq:t1}, since $r< 1/ \lambda$,
$$ \Es{(S^{(N)}/N^{1- \lambda})^{r} \mathbbm{1}_{  {}^c E^N_{1}}} =  \left( \frac{N}{N^{1- \lambda}} \right)^r \cdot   \frac{1}{ \lambda N+1}  \quad\mathop{\longrightarrow}_{N \rightarrow \infty} \quad  0,$$
and by Lemma \ref{lem:tech2}
$$\Es{(S^{(N)}/N^{1- \lambda})^{r} \mathbbm{1}_{   E^N_{1} \cap {}^c E^N_{2}}} \leq  \left( \frac{N}{N^{1- \lambda}} \right)^r \cdot \frac{1}{N^2}  \quad\mathop{\longrightarrow}_{N \rightarrow \infty} \quad  0.$$
Now note that on the event $E^N_{1} \cap E^N_{2} \cap {} E^N_{3}$, our coupling implies that there are still infected  individuals remaining at time  ${\rho}(N)-\ln(N)^{-1}$. By Proposition \ref{prop:ext} (iii),  on the event $E^N_{1} \cap E^N_{2} \cap {} E^N_{3}$, we have $$ \mathcal{S} _{N}(\rho(N)) \leq  S^{(N)} \leq  \mathcal{S} _{N}(\rho(N)- \ln(N)^{-1}),$$ or, in other words,  
$\overline{Z}_{{\sigma}_{N}(N)-\rho(N)}  \leq S^{(N)} \leq  \overline{Z}_{{\sigma}_{N}(N)-\rho(N)+\ln(N)^{-1}}$. Let $E^N_{4}$ be the event defined by $E^N_{4}=  \{\tau_{N} \geq N/2, \overline{  \tau}_{N} \leq 2N, \overline{\mathcal{E}} <\sqrt{N}\}$. Since $ \Pr {{}^c E^N_{4}} =oe(N)$ by  Lemma \ref {lem:dev} (i) and $S^{(N)} \leq N$, we have $\Es{(S^{(N)}/N^{1- \lambda})^{r} \mathbbm{1}_{   {} ^c E^N_{4}}} \rightarrow 0$ as $N \rightarrow \infty$. Hence, by the previous discussion and \eqref{eq:Zu0}, for every $N$ sufficiently large so that $ \exp( \lambda \ln(N)^{-1}) \leq 2$,
 \begin{eqnarray}
 \Es{(S^{(N)}/N^{1- \lambda})^{r} \mathbbm{1}_{   E^N_{1} \cap E^N_{2} \cap E^N_{3} \cap E^N_{4}}}&\leq&  \Es { \left(  \frac{\overline{Z}_{{\sigma}_{N}(N)-\rho(N)+\ln(N)^{-1}}}{N^{1- \lambda}} \right)  ^{r}   \mathbbm{1}_{   E^N_{1} \cap E^N_{2} \cap E^N_{3} \cap E^N_{4}\}}} \notag \\
  &  \leq & \Es  { \left( \frac{\overline{\mathcal{P}}_{  ( \sqrt {N}+  {2N})  \frac{ \mathcal{E}^{ \lambda}  }{( {N/2})^{ \lambda} }  \cdot 2}+1}{N^{1- \lambda}} \right) ^{r}}. \label {eq:moche1}
 \end{eqnarray}
 Plugging the inequality appearing in Lemma \ref {lem:dev} (v) into \eqref{eq:moche1} readily implies that $  \Es {(S^{(N)}/N^{1- \lambda})^{r}} $ is bounded for every $1 \leq r < 1/ \lambda$, which shows that $S^{(N)}/N^{1- \lambda}$ converges in $ \L^p$ for every $1 \leq p < 1/ \lambda$.

When $p> 1/ \lambda$, one similarly gets that
\begin{eqnarray*}
 \Es{(S^{(N)}/N^{1- \lambda})^{p} \mathbbm{1}_{  {}^c E^N_{1}}} &=&  \left( \frac{N}{N^{1- \lambda}} \right)^p \cdot  \frac{1}{ \lambda N+1}  \quad\mathop{ \sim}_{N \rightarrow \infty} \quad  \frac{1}{\lambda} \cdot N^{p \lambda-1}, \\
 \Es{(S^{(N)}/N^{1- \lambda})^{p} \mathbbm{1}_{   E^N_{1} \cap {}^c E^N_{2}}} &\leq&\left( \frac{N}{N^{1- \lambda}} \right)^p \cdot \frac{1}{N^2} \quad\mathop{ \sim}_{N \rightarrow \infty} \quad  N^{p \lambda-2} \\
\end{eqnarray*}
Then, to simplify notation, set, for $N \geq 1$ and $u \in \R$, 
\begin{equation}
\label{eq:W}W_{N}(u)= \frac{1}{N^{1- \lambda}} \left(\overline{\mathcal{P}}_{ \frac{ \mathcal{E}^{ \lambda} (\overline{\mathcal{E}}+  \overline{  \tau}_{N})   }{( \mathcal{E}+ \tau_{N})^{ \lambda} } e^{ \lambda u}-\overline{\mathcal{E}}}+1 \right),
\end{equation}
where we set by convention $ \overline{ \mathcal{P}}_{t}=-1$ for $t<0$. As before, by Proposition \ref{prop:ext} (iii),
$$\Es{ \left(  \frac{S^{(N)}}{N^{1- \lambda}} \right) ^{p} \mathbbm{1}_{   E^N_{1} \cap E^N_{2} \cap E^N_{3}}} \leq   \Es {  W_{N}(\ln(N)^{-1}) ^{p}}.
$$
It is a simple matter to check that  $\Es { \left(  W_{N}(\ln(N)^{-1})\right) ^{p}} \rightarrow \Es { \mathcal{E}^ {\lambda p}}$. Indeed, in the proof of Theorem \ref{thm:stateS1}, we have seen that  $W_{N}(0)$ converges in probability to $ \mathcal{E}^ \lambda$. The same argument shows that $W_{N}(\ln(N)^{-1})$ also converges in probability to $ \mathcal{E}^ \lambda$, and the same argument that lead us to \eqref{eq:moche1} shows that, for every $r \geq 1$, $\Es { \left(  W_{N}(\ln(N)^{-1})\right) ^{r}}$ is bounded as $N \rightarrow \infty$. The preceding estimates give
 $$  \Es { \left(  \frac{S^{(N)}}{N^{1- \lambda}}  \right)^p} \quad\mathop{ \sim}_{N \rightarrow \infty} \quad  \frac{1}{\lambda} \cdot N^{p \lambda-1}.$$

When $p=1/ \lambda$, we similarly obtain that
\begin{eqnarray*}
 \Es{(S^{(N)}/N^{1- \lambda})^{1/\lambda} \mathbbm{1}_{  {}^c E^N_{1}}} &=&  \left( \frac{N}{N^{1- \lambda}} \right)^{1/\lambda} \cdot  \frac{1}{ \lambda N+1}    \quad\mathop{\longrightarrow}_{n \rightarrow \infty} \quad  \frac{1}{ \lambda}, \\
 \Es{(S^{(N)}/N^{1- \lambda})^{{1/\lambda}} \mathbbm{1}_{   E^N_{1} \cap {}^c E^N_{2}}} &\leq&\left( \frac{N}{N^{1- \lambda}} \right)^{1/\lambda} \cdot \frac{1}{N^2} \quad\mathop{ \sim}_{N \rightarrow \infty} \quad  \frac{1}{ N}, \\
  \limsup_{N \rightarrow \infty }\Es{(S^{(N)}/N^{1- \lambda})^{1/\lambda} \mathbbm{1}_{   E^N_{1} \cap E^N_{2} \cap E^N_{3}}} & \leq   &    \Es { \mathcal{E}}.
\end{eqnarray*}
We next claim that
\begin{equation}
\label{eq:fatou} \Es { \mathcal{E}} \leq \liminf_{N \rightarrow \infty }\Es{(S^{(N)}/N^{1- \lambda})^{1/\lambda} \mathbbm{1}_{   E^N_{1} \cap E^N_{2} \cap E^N_{3}}}.
\end{equation}
To this end, first observe that by Proposition \ref{prop:ext} (iii),
$$\Es {   W_{N}(0) ^{1/\lambda} \mathbbm{1}_{   E^N_{1} \cap E^N_{2} \cap E^N_{3}}} \leq   \Es{ \left(  \frac{S^{(N)}}{N^{1- \lambda}} \right) ^{1/\lambda} \mathbbm{1}_{   E^N_{1} \cap E^N_{2} \cap E^N_{3}}} $$
We already know that $ \Pr{E^N_{1} \cap E^N_{2} \cap E^N_{3}} \rightarrow1$ as $N \rightarrow \infty$. In addition, in the proof of Theorem \ref{thm:stateS1}, we have seen that  $W_{N}(0)$ converges in probability to $ \mathcal{E}^ \lambda$ as $N \rightarrow \infty$.  Hence the quantity $W_{N}(0) ^{1/\lambda} \mathbbm{1}_{   E^N_{1} \cap E^N_{2} \cap E^N_{3}}$ converges in probability to $ \mathcal{E}$. Our claim \eqref{eq:fatou} then follows from Fatou's Lemma and a standard extraction argument. The preceding estimates imply that $ \Es{(S^{(N)}/N^{1- \lambda})^{1/\lambda} } \rightarrow 1+1/ \lambda$ as $N \rightarrow \infty$.  This completes the proof of  (i).
 
 Now assume that $ \lambda=1$ and recall the definition of the events $E^N_{1},E^N_{2},E^N_{3}$ from the beginning of the proof and the definition of $W_{N}(u)$ from \eqref{eq:W}.  The same argument as before yields 
 $$ \Es{S^{(N)} \mathbbm{1}_{  {}^c E^N_{1}}} =  N  \cdot  \frac{1}{  N+1}    \quad\mathop{\longrightarrow}_{N \rightarrow \infty} \quad  1, \qquad \qquad \Es{S^{(N)} \mathbbm{1}_{  {}^c E^N_{2}}} \leq   N  \cdot  \frac{C}{ N^2}    \quad\mathop{\longrightarrow}_{N \rightarrow \infty} \quad  0$$
  and
  $$\Es { W_{N}(0) \mathbbm{1}_{   E^N_{1} \cap E^N_{2} \cap E^N_{3}}} \leq   \Es{   {S^{(N)}}  \mathbbm{1}_{   E^N_{1} \cap E^N_{2} \cap E^N_{3}}} \leq \Es { W_{N}(\ln(N)^{-1}) \mathbbm{1}_{   E^N_{1} \cap E^N_{2} \cap E^N_{3}}}.$$
 Since $ \lambda=1$, we have already seen that ${\sigma}_{N}(N)-\rho(N) \rightarrow \ln (\mathcal{E}/ \overline{  \mathcal{E} })$ almost surely as $N \rightarrow \infty$. Hence  $ \mathbbm{1}_{   E^N_{1} \cap E^N_{2} \cap E^N_{3}} \rightarrow \mathbbm{1}_{ \{  \mathcal{E}> \overline{ \mathcal{E}}\}}$ almost surely as $N \rightarrow \infty$. The same arguments as for the case $ \lambda \in (0,1)$ yield
  $$ \Es{   {S^{(N)}}  \mathbbm{1}_{   E^N_{1} \cap E^N_{2} \cap E^N_{3}}}  \quad\mathop{\longrightarrow}_{N \rightarrow \infty} \quad    \Es{\overline{\mathcal{P}}_{\mathcal{E}- \overline{ \mathcal{E}}} \mathbbm{1}_{ \{  \mathcal{E}> \overline{ \mathcal{E}}\}}}= 2 \Es{G'}.$$
  The preceding estimates thus entail that $ \Es{S^{(N)}} \rightarrow 1+ 2 \Es{G'}=2$ as $ N \rightarrow \infty$. If $ p>1$, one similarly obtains that
  \begin{eqnarray*}
 \Es{ \left(S^{(N)} \right)^p \mathbbm{1}_{  {}^c E^N_{1}}} &=&  N^p  \cdot  \frac{1}{  N+1}    \quad\mathop{\sim}_{N \rightarrow \infty} \quad  N^{p-1}, \\
 \Es{\left(S^{(N)} \right)^p\mathbbm{1}_{  {}^c E^N_{2}}}& \leq&   N ^p \cdot  \frac{C}{ N^2}    \quad\mathop{\sim}_{N \rightarrow \infty} \quad  C \cdot N^{p-2}, \\
 \Es{\left(S^{(N)} \right)^p \mathbbm{1}_{  E^N_{1} \cap E^N_{2} \cap E^N_{4}}}  &\displaystyle  \mathop{\longrightarrow}_{N \rightarrow \infty}& 2 \Es{(G')^p},
  \end{eqnarray*}
  implying that $ \Es{ \left(S^{(N)} \right)^p } \sim N^{p-1}$ as $N \rightarrow \infty$. This completes the proof of  (ii).
  
  For (iii), assume that $ \lambda>1$ and write  for $p \geq 1$
  $$ \Es { \left( S^{(N)} \right) ^{p}}= \frac{N^{p}}{ \lambda N+1} +\Es{ \left( S^{(N)} \right) ^{p} \mathbbm{1}_{   E^N_{1} \cap {}^c E^N_{2}} }+\Es{ \left( S^{(N)} \right) ^{p} \mathbbm{1}_{   E^N_{1} \cap E^N_{2} \cap E^N_{3}}}.$$
As before, $\Es{ \left( S^{(N)} \right) ^{p } \mathbbm{1}_{   E^N_{1} \cap {}^c E^N_{2}}} \leq N ^{p} \cdot C/N^{2} = o(N^{p-1})$ and, for $N$ sufficiently large  that $ e^{ \lambda/ \ln(N)} \leq 2$:
\begin{eqnarray*}
 \Es{    \left( {S^{(N)}} \right) ^{p} \mathbbm{1}_{   E^N_{1} \cap E^N_{2} \cap E^N_{3}}} &\leq& \Es { W_{N}(\ln(N)^{-1})^{p} \mathbbm{1}_{   E^N_{1} \cap E^N_{2} \cap E^N_{3}}} \leq  \Es{ \left( \overline{\mathcal{P}}_{ \frac{ \mathcal{E}^{ \lambda} (\overline{\mathcal{E}}+  \overline{  \tau}_{N})   }{( \mathcal{E}+ \tau_{N})^{ \lambda} } e^{ \lambda / \ln(N)}-\overline{\mathcal{E}}}+1  \right)^{p} \mathbbm{1}_{   E^N_{1} \cap E^N_{2} \cap E^N_{3}}} \\
 & \leq & c_{p} \Es {  \big(\overline{\mathcal{P}}_{ 2 { \mathcal{E}^{ \lambda} (\overline{\mathcal{E}}+  \overline{  \tau}_{N})   }/{\tau_{N}^{ \lambda} }} \big)^{p} }+ c_{p} \Pr {E^N_{3}},
\end{eqnarray*}
where we have used Lemma \ref {lem:dev} (v) for the last inequality. Since $ \lambda>1$, it is a simple matter to check using Lemma \ref {lem:dev} (ii) that the first quantity tends to $0$ as $n \rightarrow \infty$, while $\Pr{E_{3}^{N}} \rightarrow0$ since  ${\sigma}_{N}(N)/\rho(N)$ converges almost surely to $ 1/\lambda<1$. This shows that $ \Es {S^{(N)}}  \sim N^{p-1}/ \lambda$ as $N \rightarrow \infty$ and completes the proof of the Theorem. \end {proof}

\subsection {Recovered vertices in the final state}

\begin {proof}[Proof of Theorem \ref {thm:stateR2} (ii) and (iii)]
If $ \lambda=1$, note that since $R^{(N)} \leq N$, $ \Es {(R^{(N)}/N)^{p}} \leq 1$.  Since $R^{(N)}/N$ converges in distribution as $N \rightarrow \infty$, this implies that for $ \lambda=1$, $R^{(N)}/N$ converges in $ \L^{p}$ for every $p \geq 1$. 

Now assume that $ \lambda>1$. By Proposition \ref{prop:ext} (iv), $R^{(N)} \leq \mathcal{R}({\sigma}_{N}({N}))$. Since $R^{(N)}/N^{1/{ \lambda}}$ converges in distribution, it  is enough to check that for every $1 \leq r < \lambda$, $ \Es {( \mathcal{R}({\sigma}_{N}({N}))/N^{1/ \lambda})^{r}}$ is bounded as $ N \rightarrow \infty$. Using \eqref{eq:v} and recalling that $ \Es { (1+\mathcal{P}_{s} )^{r} } \leq c_{r}(1+s^{r})$, write
$$ \Es {  \left( {\mathcal{R}({\sigma}_{N}({N}))} \right) ^{r}} \leq  \Es { \left( \mathcal{P}_{ \mathcal{E} (1+ \overline{  \tau}_{N}/ \overline{  \mathcal{E} })^{1/ \lambda} } +1 \right) ^{r}} \leq c_{r} \left( 1+ \Es { \left( \mathcal{E} (1+ \overline{  \tau}_{N}/ \overline{  \mathcal{E} })^{1/ \lambda} \right) ^{r}}\right), $$
and
$$\Es {  \mathcal{E}^{r} (1+ \overline{  \tau}_{N}/ \overline{  \mathcal{E} })^{r/ \lambda} } \leq c'_{r} \left( 1+ { \Es { \overline{  \tau}_{N} ^{r/ \lambda}}} \cdot  { \Es { {\overline{  \mathcal{E} }^{-r/ \lambda}}}} \right) \leq c''_{r}  \cdot N^{r/ \lambda}.$$
Note that here we crucially need the fact that $r < \lambda$ since $ \Es{\overline{  \mathcal{E} }^{-r/ \lambda}}= \infty$ otherwise. When $p \geq  \lambda$, the convergence \eqref{eq:cvd1} cannot hold in $ \L_{p}$ since $\Es { \left( \textnormal{\textsf{Exp}}( \overline{ \mathcal{E}}^{1/\lambda} )\right) ^{p}}= \infty$.
\end {proof}

We postpone the proof of Theorem \ref {thm:stateR2} (i) since it requires Theorem \ref {thm:stateI2} (i).

\subsection {Infected vertices in the final state}

We will need the following results in the proof of Theorem \ref {thm:stateI2}.

\begin {lem}\label {lem:UV} Let $ a_{N},b_{N}$ be two sequences of positive real numbers such that $ a_{N} \sim N$ and $b_{N} \sim N$ as $N \rightarrow \infty$. If $ \lambda \in (0,1)$, the following assertions hold.
\begin{enumerate}
\item[(i)]  We have \quad $ \displaystyle \Pr {(1+ a_{N}/ \oE)^{1/ \lambda}< 1+  b_{N}/\E }  \quad\mathop{ \sim}_{N \rightarrow \infty} \quad    \frac{\Gamma \left(1 + {1}/{ \lambda} \right)}{N^{ \frac{1}{\lambda}-1}}$.
\item[(ii)] Conditionally on the event $  (1+ a_{N}/ \oE)^{1/ \lambda}< 1+  b_{N}/\E $, the random variable $1+ \frac{\E}{b_{N}}- \frac{\E}{b_{N}}  (1+ \frac{a_{N}}{\oE})^{1/ \lambda}$ converges in distribution to the uniform distribution on $[0,1]$ as $N \rightarrow \infty$. This convergence also holds in $ \L^{1}$.
\item[(iii)] We have $ \Pr { \left( 1 +\frac{b_{N}}{ \mathcal{E}}  \right) e^{-1/ \ln(N)}  \leq\left( 1 +\frac{a_{N}}{ \oE}  \right)^{ 1/\lambda} \leq  1 +\frac{b_{N}}{ \mathcal{E}}  } = o(N^{1-1/ \lambda})$.
\end{enumerate}
\end {lem}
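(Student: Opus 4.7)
All three parts hinge on the identity, obtained by conditioning on $\oE$ and using independence,
\begin{equation*}
\Pr{(1+a_N/\oE)^{1/\lambda} < 1 + b_N/\E} \;=\; \int_0^\infty e^{-y}\bigl(1 - e^{-b_N/c_N(y)}\bigr)\,dy, \qquad c_N(y) \,:=\, (1+a_N/y)^{1/\lambda}-1.
\end{equation*}
For (i), observe that for fixed $y>0$, $c_N(y)\sim (a_N/y)^{1/\lambda}$, so $b_N/c_N(y)\sim y^{1/\lambda}\,b_N/a_N^{1/\lambda}\sim y^{1/\lambda}\,N^{1-1/\lambda}\to 0$, and therefore $1-e^{-b_N/c_N(y)}\sim y^{1/\lambda}\,b_N/a_N^{1/\lambda}$. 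Pointwise, the integrand divided by $N^{1-1/\lambda}$ converges to $y^{1/\lambda}e^{-y}$, whose integral is $\Gamma(1+1/\lambda)$. To justify dominated convergence, I will combine $1-e^{-x}\leq x$ with the lower bound $(1+z)^{1/\lambda}-1\geq z^{1/\lambda}/2$ valid for $z\geq z_0$ (which follows from $(1+z)^{1/\lambda}/z^{1/\lambda}\to 1$) applied on the range $y\leq a_N/z_0$, and use the crude bound $1-e^{-x}\leq 1$ on the complementary macroscopic range; this produces the $N$-uniform integrable envelope $C\,y^{1/\lambda}\,e^{-y}$.

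For (ii), condition again on $\oE=y$ and set $s_N(y):=b_N/c_N(y)\to 0$. The random variable of interest equals $1-\E/s_N(\oE)$ on the event of (i), which given $\oE=y$ becomes $\{\E<s_N(y)\}$. The conditional law of $\E/s_N(y)$ given $\E<s_N(y)$ has density $s_N(y)\,e^{-s_N(y)u}/(1-e^{-s_N(y)})$ on $[0,1]$, which converges uniformly to the density of $\mathrm{Uniform}([0,1])$ as $s_N(y)\to 0$. Integrating the resulting bounded conditional CDFs against the law of $\oE$ conditioned on the event of (i) delivers convergence in distribution to $\mathrm{Uniform}([0,1])$; the $\L^1$ convergence is automatic since the variable takes values in $[0,1]$.

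For (iii), set $U_N:=1-(\E/b_N)\bigl((1+a_N/\oE)^{1/\lambda}-1\bigr)$ and $B_N:=1+b_N/\E$. A short algebraic rearrangement gives $(1+a_N/\oE)^{1/\lambda}/B_N = 1 - U_N(1-1/B_N)$, so the event in (iii) is exactly $\{0\leq U_N\leq \delta_N\}$ with $\delta_N:=(1-e^{-1/\ln N})/(1-1/B_N)$. Since $1-e^{-1/\ln N}\sim 1/\ln N\to 0$, for any fixed $\epsilon>0$ and $N$ large enough, $\{\delta_N>\epsilon\}$ forces $1/B_N>1/2$, hence $\E>b_N$, which has probability $e^{-b_N}$. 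Thus
\begin{equation*}
\Pr{0\leq U_N\leq\delta_N} \;\leq\; \Pr{0\leq U_N\leq\epsilon} + e^{-b_N},
\end{equation*}
and parts (i)--(ii) together give $\Pr{0\leq U_N\leq \epsilon}\sim \epsilon\,\Gamma(1+1/\lambda)\,N^{1-1/\lambda}$. Dividing through by $N^{1-1/\lambda}$, letting $N\to\infty$ and then $\epsilon\to 0$ yields the advertised $o(N^{1-1/\lambda})$ bound.

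The delicate step is the dominated convergence in (i): the integrand is pointwise of the tiny order $N^{-(1/\lambda-1)}$ but saturates at $1$ for $y$ of order $a_N$, so a single $N$-uniform integrable envelope has to be assembled by splicing the small-$z$ power bound on $c_N$ with a crude bound on the macroscopic range of $y$. Once (i) is in place, (ii) is a standard ``small truncated exponential is uniform'' computation, and (iii) is a clean consequence of (i)--(ii) via the algebraic reformulation in terms of $U_N$.
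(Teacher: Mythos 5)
Your proof is correct and follows essentially the same strategy as the paper: condition on one of the two exponentials, read off the $N^{1-1/\lambda}$ rate from small-argument asymptotics, and seal (i) with dominated convergence; then bootstrap (iii) from (i)--(ii) exactly as you do. The only routing differences are cosmetic. In (i) you condition on $\oE$ and get the rate from $1-e^{-x}\sim x$, whereas the paper conditions on $\E$, does a change of variables, and dominates by $e^{-x^\lambda}$ via subadditivity of $t\mapsto t^{\lambda}$; both are clean, and your splicing of the $z\ge z_0$ power lower bound with the crude bound on the macroscopic range does yield a single $N$-uniform envelope $Cy^{1/\lambda}e^{-y}$ (on $y>a_N/z_0$ one has $N^{1/\lambda-1}\le z_0^{1/\lambda}y^{1/\lambda}$), so your dominated-convergence step is sound. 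In (ii) the paper runs the proof of (i) with $(1-c)b_N$ in place of $b_N$ to show $F_N(c)/F_N(0)\to 1-c$ with the domination $F_N(c)\le K(1-c)F_N(0)$, and then integrates the tail; you instead identify the conditional law of $\E/s_N(\oE)$ given $\E<s_N(\oE)$ as a truncated exponential converging to uniform — a different but equally valid mechanism, and the $\L^1$ claim is indeed automatic from boundedness. Your (iii) is the same algebraic rewriting in terms of $X_N$ that the paper uses, just with the $\{\E>b_N\}$ escape made explicit.
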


\begin {proof} Fix $0 \leq c \leq 1$ and observe that
\begin{eqnarray*}
&&\Pr {(1+ a_{N}/ \oE)^{1/ \lambda}< 1+  (1-c)b_{N}/\E } \\
 && \qquad \qquad \qquad \qquad  =\Pr { \oE > \frac{a_{N}}{ (1+ (1-c)b_{N}/\E)^{ \lambda} -1} } = \Es { \exp \left( -\frac{a_{N}}{ (1+ (1-c)b_{N}/\E)^{ \lambda} -1}  \right) } \\
&&  \qquad \qquad \qquad \qquad =\int_{0}^{ \infty} dx \ \exp(-x) \cdot  \exp \left( -\frac{a_{N}}{ (1+ (1-c)b_{N}/x)^{ \lambda} -1}  \right) \\
&& \qquad \qquad \qquad \qquad =\frac{b_{N}(1-c)}{a_{N}^{ 1/ \lambda}} \int_{0} ^{ \infty} dx\ \exp(-x (1-c)b_N/a_{N}^{1/ \lambda}) \exp \left( - \frac{1}{(1/a_{N}^{1/ \lambda}+1/x)^{ \lambda} -1/ a_{N}} \right),
\end{eqnarray*}
where we have used a change of variables for the last equality.
It follows from the dominated convergence theorem that
$$N^{ \frac{1}{\lambda}-1} \Pr {(1+ a_{N}/ \oE)^{1/ \lambda}< 1+  (1-c)b_{N}/\E }  \quad\mathop{\longrightarrow}_{N \rightarrow \infty} \quad (1-c)\int_{0}^{ \infty} dx \ \exp(-x^{ \lambda})= (1-c)\Gamma(1+1/ \lambda).$$
Indeed, it is a simple matter to check that $( \alpha x +1)^{ \lambda}- ( \alpha x)^{ \lambda} \leq 1$ for every $ \alpha \geq 0$ and $x \geq 0$, which implies that
$$\exp \left(-  \frac{1}{(1/a_{N}^{1/ \lambda}+1/x)^{ \lambda} -1/ a_{N} } \right) \leq \exp(-x^{ \lambda}).$$
for every $x \geq 0$ and $N \geq 1$. Assertion (i) immediately follows.

For (ii), note that
$$ \Es { \left . 1+ \frac{\E}{b_{N}}- \frac{\E}{b_{N}}  \left(1+ \frac{a_{N}}{\oE} \right)^{1/ \lambda} \ \right| \ (1+ a_{N}/ \oE)^{1/ \lambda}< 1+  b_{N}/\E}= \int_{0}^{1} dc \frac{F_{N}(c)}{F_{N}(0)},$$
where 
$$F_{N}(c)=\Pr {(1+ a_{N}/ \oE)^{1/ \lambda}< 1+  (1-c)b_{N}/\E }.$$
For fixed $c \in [0,1]$, we have already seen that $F_{N}(c)/F_{N}(0) \rightarrow 1-c$, which shows that the distributional limit is uniform. In addition, since $\exp(-x (1-c)b_N/a_{N}^{1/ \lambda})  \leq 1$, we also have $F_{N}(c) \leq  b_{N} (1-c)/a_{N}^{1/ \lambda}$. Since $F_{N}(0) \sim  b_{N}/a_{N}^{1/ \lambda}$ as $N \rightarrow \infty$, there exists a constant $K>0$ such that $F_{N}(c)/F_{N}(0) \leq K(1-c)$ for every $c \in [0,1]$ and $N \geq 1$. Thus $\int_{0}^{1} dc \frac{F_{N}(c)}{F_{N}(0)} \rightarrow \int_{0}^{1} dc \ (1-c)=1/2$ as $N \rightarrow \infty$ by the dominated convergence theorem. This completes the proof of (ii).

For (iii), to simplify notation set $X_{N}=1+ \frac{\E}{b_{N}}- \frac{\E}{b_{N}}  (1+ \frac{a_{N}}{\oE})^{1/ \lambda}$ and observe that 
\begin{eqnarray*}
&&\Pr { \left( 1 +\frac{b_{N}}{ \mathcal{E}}  \right) e^{-1/ \ln(N)}  \leq\left( 1 +\frac{a_{N}}{ \oE}  \right)^{ \lambda} \leq \left( 1 +\frac{b_{N}}{ \mathcal{E}}  \right)}  \\
&& \qquad\qquad\qquad\qquad\qquad\qquad\qquad= \Pr { \left(  \frac{\E}{b_{N}} +1 \right)  \left( 1-e^{- 1/ \ln(N)}\right)   \geq X_{N} \big|  X_{N}>0} \cdot   \Pr { X_{N}>0}.
\end{eqnarray*}
By (i), $\Pr { X_{N}>0} \sim {\Gamma \left(1 + {1}/{ \lambda} \right)} \cdot {N^{ 1-\frac{1}{\lambda}}}$, and by (ii), the law of $X_{N}$, conditionally on $X_{N}>0$, converges in distribution to a uniform distribution on $[0,1]$. Since $ \left(  \frac{\E}{b_{N}} +1 \right)  \left( 1-e^{- 1/ \ln(N)}\right)$ converges almost surely to $0$ as $N \rightarrow \infty$, assertion (iii) follows. This completes the proof.
\end {proof}

\begin {cor} \label {cor:UV}
For $ \lambda \in (0,1)$ we have   $$ \displaystyle \Pr {{\sigma}_{N}(N)<\rho(N)}  \quad\mathop{ \sim}_{N \rightarrow \infty} \quad    \frac{\Gamma \left(1 + {1}/{ \lambda} \right)}{N^{ \frac{1}{\lambda}-1}}.$$
\end {cor}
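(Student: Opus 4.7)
The plan is to identify the event $\{\sigma_N(N) < \rho(N)\}$ with the one treated in Lemma \ref{lem:UV}(i), specialised to the random values $a_N = \overline{\tau}_N$ and $b_N = \tau_N$, and then to extract the asymptotics using the concentration of $\tau_N, \overline{\tau}_N$ around $N$.

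From the Kendall couplings \eqref{eq:eqlaw1}, $\sigma_N(N) = \lambda^{-1}\ln(1+\overline{\tau}_N/\overline{\mathcal{E}})$ and $\rho(N) = \ln(1+\tau_N/\mathcal{E})$, so
$$\{\sigma_N(N) < \rho(N)\} \;=\; \left\{(1+\overline{\tau}_N/\overline{\mathcal{E}})^{1/\lambda} < 1+\tau_N/\mathcal{E}\right\}.$$
To cope with the randomness of $\tau_N$ and $\overline{\tau}_N$, I first truncate to $B_N := \{|\tau_N - N| \leq N^{3/4}\} \cap \{|\overline{\tau}_N - N| \leq N^{3/4}\}$: by Lemma \ref{lem:dev}(ii), $\Pr{B_N^c} = oe(N)$, which is negligible against the target order $N^{1-1/\lambda}$.

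Next, conditioning on $(\tau_N, \overline{\tau}_N) = (b, a)$ with $(a, b) \in B_N$ and using the independence of the pairs $(\tau_N, \mathcal{E})$ and $(\overline{\tau}_N, \overline{\mathcal{E}})$, the conditional probability equals, by the explicit computation carried out in the proof of Lemma \ref{lem:UV}(i) specialised to $c = 0$,
$$\frac{b}{a^{1/\lambda}} \int_0^\infty \exp\!\left(-\frac{xb}{a^{1/\lambda}}\right) \exp\!\left(-\frac{1}{(1/a^{1/\lambda} + 1/x)^\lambda - 1/a}\right) dx.$$
On $B_N$ the prefactor $\tau_N/\overline{\tau}_N^{1/\lambda}$ is asymptotic to $N^{1-1/\lambda}$, while the integral converges to $\int_0^\infty \exp(-x^\lambda)\,dx = \Gamma(1+1/\lambda)$. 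Integrating over $B_N$ and discarding the negligible contribution of $B_N^c$ will then yield $\Pr{\sigma_N(N) < \rho(N)} \sim \Gamma(1+1/\lambda) \cdot N^{1-1/\lambda}$.

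The main obstacle is to make the convergence of the integral \emph{uniform} in $(a, b) \in B_N$, since Lemma \ref{lem:UV}(i) as stated is only a pointwise asymptotic statement. This is handled by re-examining the dominated-convergence step in the proof of Lemma \ref{lem:UV}(i): the dominating function $\exp(-x^\lambda)$ does not depend on $(a, b)$, and the pointwise limit of the integrand is continuous in $(a, b)$ at $(N, N)$, so a standard argument upgrades the pointwise statement to uniform convergence on the shrinking neighbourhood $B_N$ of $(N, N)$.
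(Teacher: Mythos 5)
Your overall route — Kendall coupling, truncation via Lemma~\ref{lem:dev}(ii), then Lemma~\ref{lem:UV}(i) — is the same as the paper's. The difference is in how the asymptotic for deterministic sequences $a_N,b_N$ is transferred to the random pair $(\overline\tau_N,\tau_N)$. The paper does this with a one-line monotone sandwich: the map $(a,b)\mapsto\Pr{(1+a/\oE)^{1/\lambda}<1+b/\E}$ is decreasing in $a$ and increasing in $b$, so on $B_N$ the unconditional probability is squeezed between the deterministic probabilities at $(N+N^{3/4},\,N-N^{3/4})$ and $(N-N^{3/4},\,N+N^{3/4})$, to each of which Lemma~\ref{lem:UV}(i) applies directly, with the $oe(N)$ error from $B_N^c$ swallowed by $N^{1-1/\lambda}$.

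You instead condition on $(\tau_N,\overline\tau_N)=(b,a)$ and claim uniform convergence of the conditional integral over $B_N$. This can be made rigorous, but as phrased it has a soft spot: ``the pointwise limit of the integrand is continuous in $(a,b)$ at $(N,N)$'' is not a well-posed statement, since $(N,N)$ is itself moving with $N$; there is no fixed point at which to invoke a Dini-type upgrade. The correct justifications are either (a) a diagonal/contradiction argument — if uniformity failed, pick $(a_{N_k},b_{N_k})\in B_{N_k}$ witnessing the failure, note $a_{N_k}\sim N_k$, $b_{N_k}\sim N_k$, and contradict Lemma~\ref{lem:UV}(i) directly (this works precisely because, as you note, the dominating function $\exp(-x^\lambda)$ is independent of $(a,b)$); or, far more simply, (b) the monotonicity observation above, which makes the sup and inf of the conditional probability over $B_N$ attained at the corners and hence immediately controlled by the lemma. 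Since (b) is available, the conditioning and uniformity discussion is extra machinery; the paper's sandwich is the tidier version of what you are attempting.
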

\begin {proof}
Recalling that ${\sigma}_{N}(N)= \ln(1+ \overline{  \tau}_{N}/ \oE)/ \lambda$ and $\rho(N)= \ln(1+ \tau_{N}/\E)$, this is a simple consequence of Lemma  \ref {lem:UV}, since by Lemma \ref {lem:dev} (ii) we can write 
\begin{eqnarray*}
&& \Pr { \left( 1+  \frac{N+N^{3/4}}{ \oE} \right) ^{1/ \lambda}< 1+  \frac{N-N^{3/4}}{\E} } +oe(N)  \\
&& \qquad\qquad\qquad  \leq  \quad \Pr {{\sigma}_{N}(N)<\rho(N)}  \quad \leq \quad  \Pr { \left( 1+  \frac{N-N^{3/4}}{ \oE} \right) ^{1/ \lambda}< 1+  \frac{N+N^{3/4}}{\E}}+oe(N).
\end{eqnarray*}\end {proof}

In the sequel, we will often refer to Lemma \ref {lem:UV} when $a_{N}$ and $b_{N}$ are respectively replaced by $  \overline{ \tau}_{N}$ and $ \tau_{N}$. This may be justified by using the same kind of inequalities as in the proof of Corollary \ref {cor:UV} (we shall leave such details to the reader).

We will also need the following estimate.

\begin{lem}\label{lem:techn} For $ \lambda \in (0,1)$ we have $ \Pr {{\sigma}_{N}(N)<\rho(N), {\sigma}_{N}(1)>\rho(1)}= o (N^{1-1/ \lambda})$.
\end{lem}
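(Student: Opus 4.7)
My plan is to combine the two events via Kendall's representation \eqref{eq:eqlaw1} and then reduce to a Gamma-type integral. Using that representation, $\{{\sigma}_N(N)<\rho(N)\}$ is equivalent to $1+\overline{\tau}_N/\oE<(1+\tau_N/\E)^\lambda$, while $\{{\sigma}_N(1)>\rho(1)\}$ is equivalent to $1+\overline{\tau}_N/\oE>(1+\overline{\tau}_{N-1}/\oE)(1+\tau_1/\E)^\lambda$. Eliminating $1+\overline{\tau}_N/\oE$ between these two inequalities would yield the inclusion
$$\{{\sigma}_N(N)<\rho(N),\ {\sigma}_N(1)>\rho(1)\}\ \subseteq\ \left\{\left(1+\frac{\overline{\tau}_{N-1}}{\oE}\right)^{1/\lambda} < \frac{\E+\tau_N}{\E+\tau_1}\right\}.$$
Since $\oE$ is $\Exp(1)$ and independent of $(\overline{\tau}_{N-1},\E,\tau_N,\tau_1)$, I would integrate it out to obtain
$$\Pr{{\sigma}_N(N)<\rho(N),\ {\sigma}_N(1)>\rho(1)}\ \leq\ \Es{\exp\!\left(-\frac{\overline{\tau}_{N-1}}{\left((\E+\tau_N)/(\E+\tau_1)\right)^\lambda-1}\right)}.$$

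To estimate this expectation, I would apply the subadditivity inequality $(1+x)^\lambda \leq 1+x^\lambda$ (valid for $\lambda \in (0,1)$ and $x \geq 0$) to get $\left((\E+\tau_N)/(\E+\tau_1)\right)^\lambda - 1 \leq \left((\tau_N-\tau_1)/(\E+\tau_1)\right)^\lambda$. Restricting to the event $A_N = \{\overline{\tau}_{N-1} \geq (N-1)/2,\ \tau_N \leq 2N\}$, whose complement has probability $oe(N)$ by Lemma~\ref{lem:dev} (ii), we have $\overline{\tau}_{N-1}/(\tau_N-\tau_1)^\lambda \geq c_0 N^{1-\lambda}$ for some $c_0>0$ and all large $N$. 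Since $\E+\tau_1$ has a Gamma$(2,1)$ density $xe^{-x}$ on $[0,\infty)$, independent of $A_N$ would not be needed (it suffices to bound the contribution from $A_N^c$ by $\Pr{A_N^c}$), and the above expectation is bounded by
$$\int_0^\infty xe^{-x}\exp\!\left(-c_0 N^{1-\lambda}x^\lambda\right)dx\ +\ oe(N)\ \leq\ \int_0^\infty x\exp\!\left(-c_0 N^{1-\lambda}x^\lambda\right)dx\ +\ oe(N).$$
The change of variables $u = c_0 N^{1-\lambda}x^\lambda$ evaluates the remaining integral to $\Gamma(2/\lambda) \lambda^{-1} c_0^{-2/\lambda} \cdot N^{2-2/\lambda}$. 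Since $2-2/\lambda < 1-1/\lambda$ for every $\lambda \in (0,1)$, this gives $\Pr{{\sigma}_N(N)<\rho(N),\ {\sigma}_N(1)>\rho(1)} = O(N^{2-2/\lambda}) = o(N^{1-1/\lambda})$, as required.

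The main subtle point is the choice of the subadditivity inequality $(1+x)^\lambda \leq 1+x^\lambda$. The more natural concavity (tangent line) bound $(1+x)^\lambda - 1 \leq \lambda x$ would be far too weak here, since the relevant argument $x=(\tau_N-\tau_1)/(\E+\tau_1)$ is typically of order $N$, and for such large values the true behaviour of $(1+x)^\lambda - 1$ is of order $x^\lambda \ll \lambda x$. The power bound $x^\lambda$ produces the factor $N^{1-\lambda}$ inside the exponential, which is precisely what is needed for the Gamma$(2,1)$ integral to yield the decay $N^{2-2/\lambda}$; this decay is slow compared to say $N^{-N^{1-\lambda}}$ one would hope for naively, but it is nevertheless faster than the target $N^{1-1/\lambda}$ (by a factor $N^{1-1/\lambda}$), which is all that is required.
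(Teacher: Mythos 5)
Your proof is correct, and it takes a genuinely different route from the paper's. The paper uses the Markov-chain structure of $\mathcal{S}_N$ and $\mathcal{R}$ directly: it writes $(\sigma_N(1),\sigma_N(N))$ as $(\mathcal{E}_N,\mathcal{E}_N+\sigma_{N-1}(N-1))$ and $(\tau_1,\tau_N)$ as $(\mathcal{E}_1,\mathcal{E}_1+\tau_{N-1})$ with independent first increments, factors out $\Pr{\mathcal{E}_N>\mathcal{E}_1}=O(1/N)$, and then applies the argument of Corollary~\ref{cor:UV} to the remaining event, obtaining the bound $O(N^{-1/\lambda})$. You instead stay entirely inside the Kendall/Poisson coupling \eqref{eq:eqlaw1}: you combine the two events into a single inequality on $\oE$, integrate $\oE$ out (using its independence from $(\overline{\tau}_{N-1},\E,\tau_1,\tau_N)$, which is a built-in property of the coupling), and then the decisive observation is that $\E+\tau_1$ has a $\mathrm{Gamma}(2,1)$ density $xe^{-x}$ rather than the exponential density appearing in the proof of Lemma~\ref{lem:UV}~(i); the extra factor of $x$ upgrades $N^{1-1/\lambda}$ to $N^{2-2/\lambda}$. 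Both arguments exploit the same intuition --- requiring additionally that the first $\mathcal{S}_N$-jump precede the first $\mathcal{R}$-jump costs a substantial extra factor --- but through different mechanisms. The paper's bound $O(N^{-1/\lambda})$ is uniformly sharper by a factor $N^{-1}$, whereas yours is $O(N^{2-2/\lambda})$, which is weaker for $\lambda>1/2$ but still $o(N^{1-1/\lambda})$ throughout $(0,1)$, which is all that is needed. Your choice of the subadditivity bound $(1+x)^\lambda\leq 1+x^\lambda$ is the right one, as you note, and your restriction to $A_N$ (whose complement is $oe(N)$ by Lemma~\ref{lem:dev}~(ii)) followed by simply dropping the indicator is legitimate since the integrand is nonnegative; no independence between $A_N$ and $\E+\tau_1$ is required.
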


\begin {proof}It follows from the definition of the chain $ \mathcal{S}_{N}$ that $({\sigma}_{N}(1),{\sigma}_{N}(N))$ has the same distribution as $( \mathcal{E}_{N}, \mathcal{E}_{N}+{\sigma}_{N-1}(N-1))$, where $ \mathcal{E}_{N}$ is an independent exponential random variable of parameter $N$. Let  $ \mathcal{E}_{1}$ be an independent exponential random variable of parameter $1$. Hence, since $ ( \tau_{1}, \tau_{N})$ has the same distribution as $( \mathcal{E}_{1}, \mathcal{E}_{1} + \tau_{N-1})$,
\begin{eqnarray*}
\Pr {{\sigma}_{N}(N)<\rho(N), {\sigma}_{N}(1)>\rho(1)} & \leq & \Pr{{\sigma}_{N-1}(N-1) < \rho(N), \mathcal{E}_{N} > \rho(1)} \\
& \leq & \Pr{  \ln \left(1+  \frac{\overline{  \tau}_{N-1}}{\oE}  \right) < \ln \left(1+  \frac{ \mathcal{E}_{1}+\tau_{N-1}}{\E} \right),  \mathcal{E}_{N}> \mathcal{E}_{1} }.
\end{eqnarray*}
Since $ \Pr { \mathcal{E}_{1}> \sqrt{N}}=oe(N)$, it follows that 
\begin{eqnarray*}
\Pr {{\sigma}_{N}(N)<\rho(N), {\sigma}_{N}(1)>\rho(1)}  &\leq& \Pr{  \ln \left(1+  \frac{\overline{  \tau}_{N-1}}{\oE}  \right) < \ln \left(1+  \frac{  \sqrt{N}+\tau_{N-1}}{\E} \right),  \mathcal{E}_{N}> \mathcal{E}_{1} } +oe(N) \\
&=& \frac{1}{N+1} \Pr{  \ln \left(1+  \frac{\overline{  \tau}_{N-1}}{\oE}  \right) < \ln \left(1+  \frac{  \sqrt{N}+\tau_{N-1}}{\E} \right) } +oe(N).
\end{eqnarray*}
The same argument that was used to establish Corollary \ref{cor:UV} shows that 
$$\Pr{  \ln \left(1+  \frac{\overline{  \tau}_{N-1}}{\oE}  \right) < \ln \left(1+  \frac{  \sqrt{N}+\tau_{N-1}}{\E} \right) }  \quad\mathop{\sim}_{N \rightarrow \infty} \quad   \frac{\Gamma \left(1 + {1}/{ \lambda} \right)}{N^{ \frac{1}{\lambda}-1}}.$$
The desired result follows.
\end{proof}

We are finally ready to prove Theorem \ref {thm:stateI2}.

\begin {proof}[Proof of Theorem \ref {thm:stateI2}]  When $ \lambda=1$ (resp. $ \lambda>1$), this is a simple consequence of Theorem \ref {thm:stateS2} (ii) and Theorem \ref {thm:stateR2} (ii) (resp. Theorem \ref {thm:stateS2} (iii) and Theorem \ref {thm:stateR2} (iii)) by using the equality $I^{(N)}=N+2-S^{(N)}-R^{(N)}$.

Now assume that $ 0< \lambda<1$, recall that $ E^N_{1}=  \{\rho(1)>{\sigma}_{N}(1)\}$ and let $E^{N}_{5}$ be the event
 $$E^{N}_{5}= \{ {\sigma}_{N}(N) \leq \rho(N)- \ln(N)^{-1}\}.$$
 Since $ \Pr {E^{N}_{5}}= \Pr {{\sigma}_{N}(N) \leq \rho(N)}- \Pr {\rho(N)- \ln(N)^{-1} \leq {\sigma}_{N}(N) \leq \rho(N)}$, by Corollary \ref {cor:UV} and Lemma \ref {lem:UV} (iii) we have
 $$ \Pr {E^{N}_{5}}  \quad\mathop{ \sim}_{N \rightarrow \infty} \quad    \frac{\Gamma \left(1 + {1}/{ \lambda} \right)}{N^{ \frac{1}{\lambda}-1}}.$$
Next,  since $I^{(N)}=0$ on each one of the events ${}^{c}E^{N}_{ext}$, ${}^c E^N_{1}$ and $  \{{\sigma}_{N}(N)>\rho(N)\}$, we may write
$$ \Es {I^{(N)}}= \Es {I^{(N)} \mathbbm {1}_{   \rho(N) - \ln(N)^{-1} \leq {\sigma}_{N}(N) \leq \rho(N), E^{N}_{ext}\} }}+\Es {I^{(N)} \mathbbm {1}_{   E^N_{1}, E^{N}_{5}, E^{N}_{ext}\} }}.$$
Let $A_{N}$ (resp. $B_{N}$) be the first term (resp. second term) in the last sum. We study $A_{N}$ and $B_{N}$ separately. Since $A_{N} \leq N \cdot \Pr {\rho(N) - \ln(N)^{-1} \leq {\sigma}_{N}(N) \leq \rho(N)}$, by Lemma \ref {lem:UV} we have $A_{N}=o(N^{2-1/ \lambda})$. 

We shall now establish that
$$B_{N}  \quad\mathop{\sim}_{N \rightarrow \infty} \quad \frac{\Gamma(1+1/ \lambda)}{2} \cdot {N^{2-1/ \lambda}}.$$
By Proposition \ref {prop:ext} (ii), we have $I^{(N)}=N+2- \mathcal{R}({\sigma}_{N}(N))$ on the event $E^{N}_{ext}$, so that
\begin{eqnarray*}
B_{N} &=&\Es { I^{(N)} \mathbbm {1}_{   E^{N}_{1}, E^{N}_{5}, E^{N}_{ext}\} }} \\
&=&\Es {(N+2- \mathcal{R}({\sigma}_{N}(N)))  \mathbbm {1}_{   E^{N}_{1}, E^{N}_{5}\} }}-\Es {(N+2- \mathcal{R}({\sigma}_{N}(N)) ) \mathbbm {1}_{   E^{N}_{1}, E^{N}_{5}, {}^{c}E^{N}_{ext}\} }} 
\end{eqnarray*}
By Lemma \ref {lem:tech2}, we have $\Es {(N+2- \mathcal{R}({\sigma}_{N}(N)))  \mathbbm {1}_{   E^{N}_{1},E^{N}_{5}, {}^{c}E^{N}_{ext}\} } }\leq  N \Pr { E^{N}_{1}, E^{N}_{5} , {}^{c}E^{N}_{ext}} \leq C/N$. Then write
$$\Es {(N+2- \mathcal{R}({\sigma}_{N}(N)))  \mathbbm {1}_{   E^{N}_{1}, E^{N}_{5}\} }}=\Es {(N+2- \mathcal{R}({\sigma}_{N}(N)))  \mathbbm {1}_{  E^{N}_{5}\} }}-\Es {(N+2- \mathcal{R}({\sigma}_{N}(N)))  \mathbbm {1}_{   {}^c E^{N}_{1}, E^{N}_{5}\} }}.$$
By Lemma \ref{lem:techn}, we have  $$\Es {(N+2- \mathcal{R}({\sigma}_{N}(N)))  \mathbbm {1}_{   {}^c E^{N}_{1}, E^{N}_{5}\} }} \leq (N+2) \Pr {{\sigma}_{N}(N)<\rho(N), U_{N}(1)>\rho(1)}= o (N^{2-1/ \lambda}).$$
In addition, using our coupling and the fact that $( {\mathcal{P}}_{t}; 0 \leq t \leq {\tau}_{N})$ has the same distribution as $(N-1-{\mathcal{P}}_{( \overline{\tau}_{N}-t)-}; 0 \leq t \leq  {\tau}_{ N})$, we have
\begin{eqnarray*}
\Es {(N+2- \mathcal{R}({\sigma}_{N}(N)))  \mathbbm {1}_{   E^{N}_{5}\} }} &=& 2 \Pr { E^{N}_{5}}+\Es {(N-1- \mathcal{P}_{ \E (e^{	{\sigma}_{N}(N)}-1) }  )\mathbbm {1}_{   E^{N}_{5}\} }} \\
&=&2\Pr { E^{N}_{5}}+ \Es { \mathcal{P}_{ \tau_{N}-\E (e^{	{\sigma}_{N}(N)}-1)} \mathbbm {1}_{   E^{N}_{5}\} }}.
\end{eqnarray*}
We have $2\Pr { E^{N}_{5}} = o(N^{2-1/ \lambda})$, and  $\Es { \mathcal{P}_{ \tau_{N}-\E (e^{	{\sigma}_{N}(N)}-1)} \mathbbm {1}_{   E^{N}_{5}\} }}$ can be written as
$$\Es { \mathcal{P}_{ \tau_{N}-\E (e^{	{\sigma}_{N}(N)}-1)} \mathbbm {1}_{ \{  {\sigma}_{N}(N)<\rho(N)\} }}-\Es { \mathcal{P}_{ \tau_{N}-\E (e^{	{\sigma}_{N}(N)}-1)} \mathbbm {1}_{ \{ \rho(N) - \ln(N)^{-1} \leq {\sigma}_{N}(N) \leq \rho(N)\} }}.$$
We can bound the second term of this expression by $N \cdot \Pr {\rho(N) - \ln(N)^{-1} \leq {\sigma}_{N}(N) \leq \rho(N)}$, which by Lemma \ref {lem:UV} is $o(N^{2-1/ \lambda})$. Rewrite the first expression as:
$$\Es { \mathcal{P}_{ \tau_{N}-\E (e^{	{\sigma}_{N}(N)}-1)} \mathbbm {1}_{ \{  {\sigma}_{N}(N)<\rho(N)\} }}= \Es { \mathcal{P}_{ \tau_{N} \cdot  \left( 1+ \frac{\E}{\tau_{N}}- \frac{\E}{\tau_{N}}  (1+ \frac{\overline{ \tau}_{N}}{\oE})^{1/ \lambda} \right) }  \mathbbm {1}_{ \{ 1+ \frac{\E}{\tau_{N}}- \frac{\E}{\tau_{N}}  (1+ \frac{ \overline{ \tau}_{N}}{\oE})^{1/ \lambda} \geq 0\} }}.$$
We claim that this expression is asymptotic to $\frac{\Gamma(1+1/ \lambda)}{2} \cdot {N^{2-1/ \lambda}}$ as $N \rightarrow \infty$. To prove this, as before, it is enough to establish this claim when $ \overline{  \tau}_{N}$ and $ \tau_{N}$ are respectively replaced by $a_{N}$ and $b_{N}$, where $ a_{N},b_{N}$ are two sequences of positive real numbers such that $ a_{N} \sim N$ and $b_{N} \sim N$ as $N \rightarrow \infty $ (we leave details to the reader). Recall the notation $X_{N}=1+ \frac{\E}{b_{N}}- \frac{\E}{b_{N}}  (1+ \frac{a_{N}}{\oE})^{1/ \lambda}$ and observe that
$$ \Es { \mathcal{P}_{ b_{N} X_{N} } \mathbbm {1}_{ \{ X_{N} \geq 0\} } }=  \Es {  b_{N} X_{N} \mathbbm {1}_{ \{ X_{N} \geq 0\} } }= b_{N} \cdot \Es {X_{N} | X_{N} \geq 0} \cdot  \Es {X_{N} \geq 0},$$
which, by Lemma \ref {lem:UV} (i) and (ii) is asymptotic to $\frac{\Gamma(1+1/ \lambda)}{2} \cdot {N^{2-1/ \lambda}}$ as $N \rightarrow \infty$. The preceding estimates establish that $\Es {I^{(N)}} \sim\frac{\Gamma(1+1/ \lambda)}{2} \cdot {N^{2-1/ \lambda}}$ as $N \rightarrow \infty$, and this completes the proof.
\end {proof}

Note that the factor $1/2$ is present in the asymptotic behavior of $\Es {I^{(N)}}$ and not in that of  $ \Pr {{\sigma}_{N}(N) \leq \rho(N)}$. The reason stems from the proof of Theorem \ref {thm:stateI2}: when ${\sigma}_{N}(N) \leq \rho(N)$, extinction happens roughly at time $ \tau_{N}/2$ (in the time scale of the Poisson processes), and then on average roughly $N/2$ infected vertices are present at that time.

 The proof of Theorem \ref {thm:stateR2} (i)  is now effortless:
 
 \begin {proof}[Proof of Theorem \ref {thm:stateR2} (i)]Assume that $ \lambda \in (0,1)$. By Theorem \ref {thm:stateS2} (i), we have $\Es { S^{(N)}} \sim N^{1- \lambda} \cdot  \Gamma( \lambda+1)$. It then suffices to observe that $R^{(N)}=N+2-I^{(N)}-S^{(N)}$. Indeed, by Theorem \ref {thm:stateI2} (i), when $ \lambda<(1- \sqrt{5})/2$, we have $ \Es {I^{(N)}}=o(N^{1/ \lambda})$, when $ \lambda=(1- \sqrt{5})/2$ we have $2-1/ \lambda=1- \lambda$ and $\Es {I^{(N)}} \sim \frac{1}{2} \Gamma(1+1/ \lambda) N^{1- \lambda}$ and finally when $ (1- \sqrt {5})/2 < \lambda<1$ we have $ \Es {S^{(N)}}=o(N^{2-1/ \lambda})$.
 \end {proof}

  \section{Proofs of the technical lemmas}
  
\subsection{Proof of Lemma \ref{lem:tech1}}
\label {sec:t1}
 \begin {proof} If $ \lambda \in (0,1)$, Lemma \ref{lem:tech1} follows from the fact that $ \Pr{{\sigma}_{N}(N)<\rho(N)} \rightarrow 0$ as $ N \rightarrow \infty$. Now assume that $ \lambda \geq 1$. By \eqref{eq:ubarre} and \eqref{eq:v}, it is  sufficient to show that
    $$\Pr { \ln \left( 1 + \frac{ \tau_{N}}{ \mathcal{E}} \right)> \frac{1}{ \lambda}  \ln \left( 1 + \frac{ \overline{\tau}_{N}}{  \overline{\mathcal{E}}} \right) \textrm{ and } \exists \ 0 \leq t \leq \frac{1}{ \lambda}  \ln \left( 1 + \frac{ \overline{\tau}_{N}}{  \overline{\mathcal{E}}} \right) \textrm{ such that }   \mathcal{P}_{ \mathcal{E}( e^{ t}-1)}>\overline{\mathcal{P}}_{( \overline{\tau}_{N}+ \overline{\mathcal{E}}) \left(1- e^{- \lambda t} \right)} }$$
    tends to $0$ as $N \rightarrow \infty$. By making the time change $s=( \overline{\tau}_{N}+ \overline{\mathcal{E}}) \left(1- e^{- \lambda t} \right)$, this is equivalent to showing that
     $$\Pr { \ln \left( 1 + \frac{ \tau_{N}}{ \mathcal{E}} \right)> \frac{1}{ \lambda}  \ln \left( 1 + \frac{ \overline{\tau}_{N}}{  \overline{\mathcal{E}}} \right) \textrm{ and } \exists \  0 \leq s \leq  \overline{  \tau}_{N}  \textrm{ such that }\mathcal{P}_{\mathcal{E} \left(  \left( \frac{ \overline{ \tau}_{N}+ \overline{\mathcal{E}}}{ \overline{ \tau}_{N}+ \overline{\mathcal{E}} - s}\right)^{1/ \lambda}-1\right)} >  \overline{  \mathcal{P} }_{s} }  \quad\mathop{\longrightarrow}_{N \rightarrow \infty} \quad 0 ,$$
or, again equivalently, to showing that
            $$\Pr { \ln \left( 1 + \frac{ \tau_{N}}{ \mathcal{E}} \right)> \frac{1}{ \lambda}  \ln \left( 1 + \frac{ \overline{\tau}_{N}}{  \overline{\mathcal{E}}} \right) \textrm{ and } \exists \ 1 \leq i \leq  N  \textrm{ such that }\mathcal{P}_{\mathcal{E} \left(  \left( \frac{ \overline{ \tau}_{N}+ \overline{\mathcal{E}}}{ \overline{ \tau}_{N}+ \overline{\mathcal{E}} -\overline{\tau}_{i}}\right)^{1/ \lambda}-1\right)} \geq   i }$$
    tends to $0$ as $N \rightarrow \infty$.

We first treat the case $ \lambda=1$. Since $\ln \left( 1 + \frac{ \tau_{N}}{ \mathcal{E}} \right)/\ln \left( 1 + \frac{ \overline{\tau}_{N}}{  \overline{\mathcal{E}}} \right)$ converges almost surely to $ \oE/\E$, it is sufficient to establish that
\begin{equation}
\label{eq:1}\Pr { \overline{ \mathcal{E}}> \mathcal{E} \textrm{ and } \exists \ 1 \leq i \leq  N  \textrm{ such that }\mathcal{P}_{\mathcal{E} \left( \frac{ \overline{\tau}_{i}}{ \overline{ \tau}_{N}+ \overline{ \mathcal{E}} -\overline{\tau}_{i}}\right)} \geq   i }  \quad\mathop{\longrightarrow}_{N \rightarrow \infty} \quad 0.
\end{equation}
To this end, we  separate the cases $1 \leq  i \leq \sqrt {N}$ and $i> \sqrt {N}$ and first prove that
\begin{equation}
\label{eq:00}\Pr {\exists \ 1 \leq i \leq  \sqrt {N}  \textrm{ such that }\mathcal{P}_{\mathcal{E} \left( \frac{ \overline{\tau}_{i}}{ \overline{ \tau}_{N}+ \overline{ \mathcal{E}} -\overline{\tau}_{i}}\right)} \geq   i } \quad\mathop{\longrightarrow}_{N \rightarrow \infty} \quad 0.
\end{equation}
By Lemma \ref {lem:dev},  $\Pr {  \tau_{N}< N/2}=oe(N)$ and $ \Pr { \exists \ 1 \leq i \leq  \sqrt {N}  \textrm{ such that } \overline{ \tau}_{i}>N^{3/4}}=oe(N)$, so that it is sufficient to check that  for every $M>0$, \begin{equation}
\label{eq:0}\Pr {  \exists \ 1 \leq i \leq  \sqrt{N}  \textrm{ such that }\mathcal{P}_{M { \overline{\tau}_{i}}/{ N^{3/4}}} \geq   i }   \quad\mathop{\longrightarrow}_{N \rightarrow \infty} \quad 0.
\end{equation}
Using the inequality $ \Pr { \mathcal{P}_{s} \geq i} \leq s^i /i!$, we get that
$$\Pr{\mathcal{P}_{M { \overline{\tau}_{i}}/{ N^{3/4}}} \geq   i }\leq  \frac{M^i}{i! N^{3i/4}} \Es{ \overline{\tau}^i_{i}}= (M/N^{3/4}) ^i \cdot  \frac{(2i-1)!}{i! (i-1)!} \leq (4M/N^{3/4}) ^i.$$
Hence $\Pr {  \exists \ 1 \leq i \leq  \sqrt{N}  \textrm{ such that }\mathcal{P}_{M { \overline{\tau}_{i}}/{ N}} \geq   i }   \leq  \sum_{i =1}^ \infty (4M/N^{3/4}) ^i = 4M/(N^{3/4}-4M)$, and  \eqref{eq:0} follows.

We next  show that
\begin{equation}
\label{eq:2}\Pr { \overline{ \mathcal{E}}> \mathcal{E} \textrm{ and } \exists \ \sqrt {N} \leq i \leq  N  \textrm{ such that }\mathcal{P}_{\mathcal{E} \left( \frac{ \overline{\tau}_{i}}{ \overline{ \tau}_{N}+ \overline{ \mathcal{E}} -\overline{\tau}_{i}}\right)} >  i }   \quad\mathop{\longrightarrow}_{N \rightarrow \infty} \quad 0.
\end{equation}
Since  $\mathcal{E} { \overline{\tau}_{i}}/({ \overline{ \tau}_{N}+ \overline{ \mathcal{E}} -\overline{\tau}_{i}}) \leq  \mathcal{E} \overline{\tau}_{i}/ \overline{ \mathcal{E} } $, it is sufficient to show that
$$\Pr { \overline{ \mathcal{E}}> \mathcal{E} \textrm{ and } \exists \ \sqrt {N} \leq i \leq  N  \textrm{ such that }\mathcal{P}_{\mathcal{E} \overline{\tau}_{i}/ \overline{ \mathcal{E} }} >  i } \quad\mathop{\longrightarrow}_{N \rightarrow \infty} \quad 0.
$$
Fix $ \epsilon>0$ and let $ \eta \in (0,1)$ be such that $ \Pr {\overline{ \mathcal{E}}> \mathcal{E} > (1- \eta)\overline{ \mathcal{E}} }\leq  \epsilon$. Then observe that we have $ \Pr {  \forall \ i \geq \sqrt {N}, \overline{  \tau}_{i}/i \leq 1+ \eta} \geq1- \epsilon$ for $N$ sufficiently large (this can be seen using, for example, Lemma \ref {lem:dev} (i) and the union bound). Hence it is sufficient to establish that
$$\Pr{ \exists \ \sqrt {N} \leq i \leq  N  \textrm{ such that }\mathcal{P}_{(1- \eta^{2}){i}}>  i }  \quad\mathop{\longrightarrow}_{N \rightarrow \infty} \quad 0.$$
This follows from the fact that for every $ \eta \in (0,1)$, there exists a constant $C>0$ such that we have $ \Pr {\mathcal{P}_{(1- \eta^2)i}>i} \leq  \exp(-C i)$.  Indeed, fix $ \gamma>0$ such that $ \gamma>(1- \eta^2)(e^ \gamma-1)$, and using Markov's exponential inequality write $\Pr {\mathcal{P}_{(1- \eta^2)i}>i} \leq e^{ - \gamma i} e^{(1- \eta^2) i (e^ \gamma-1)} = e^{- i( \gamma-(1- \eta^2)(e^ \gamma-1))}$. Combined with \eqref{eq:00}, this completes the proof in the case $ \lambda=1$.

We finally treat the case $ \lambda>1$.  To this end, we again separate the cases $i \leq \sqrt {N}$ and $i> \sqrt {N}$ and first note that
    \begin{equation}
    \label{eq:g1}  \Pr { \exists \  1 \leq i \leq  \sqrt {N}  \textrm{ such that }\mathcal{P}_{\mathcal{E} \left(  \left( \frac{ \overline{ \tau}_{N}+ \overline{\mathcal{E}}}{ \overline{ \tau}_{N}+ \overline{\mathcal{E}} -\overline{\tau}_{i}}\right)^{1/ \lambda}-1\right)} \geq   i } \quad\mathop{\longrightarrow}_{N \rightarrow \infty} \quad 0.
    \end{equation}
   Indeed, since $ x^{1/ \lambda} \leq x$ for $x \geq 1$, we have  
    $$  \Pr { \exists \  1 \leq i \leq  \sqrt {N}  \textrm{ such that }\mathcal{P}_{\mathcal{E} \left(  \left( \frac{ \overline{ \tau}_{N}+ \overline{\mathcal{E}}}{ \overline{ \tau}_{N}+ \overline{\mathcal{E}} -\overline{\tau}_{i}}\right)^{1/ \lambda}-1\right)} \geq   i } \leq  \Pr {\exists \ 1 \leq i \leq  \sqrt {N}  \textrm{ such that }\mathcal{P}_{\mathcal{E} \left( \frac{ \overline{\tau}_{i}}{ \overline{ \tau}_{N}+ \overline{ \mathcal{E}} -\overline{\tau}_{i}}\right)} \geq   i },$$
   which tends to $0$ as $N \rightarrow \infty$ by \eqref{eq:00}.  We next show that \begin{equation*}
 \Pr { \exists \  \sqrt {N} \leq i \leq   {N}  \textrm{ such that }\mathcal{P}_{\mathcal{E} \left(  \left( \frac{ \overline{ \tau}_{N}+ \overline{\mathcal{E}}}{ \overline{ \tau}_{N}+ \overline{\mathcal{E}} -\overline{\tau}_{i}}\right)^{1/ \lambda}-1\right)} >  i } \quad\mathop{\longrightarrow}_{N \rightarrow \infty} \quad 0.
    \end{equation*}
To this end, using the fact that $\frac{ v+ \overline{\mathcal{E}}}{ v+ \overline{\mathcal{E}} -u} \leq\frac{ u+ \overline{\mathcal{E}}}{  \overline{\mathcal{E}} } $ for every $0 \leq u \leq v$ and the fact that $ \Pr{ \overline{ \tau}_{i}< \mathcal{E} }=1/2^i$, it is sufficient to prove that
$$\Pr {  \exists \  \sqrt {N} \leq i \leq   {N}  \textrm{ such that }\mathcal{P}_{ 
\frac{2\E}{\oE^{1/ \lambda}} \overline{ \tau}_{i}^{1/ \lambda}}>i } \quad\mathop{\longrightarrow}_{N \rightarrow \infty} \quad 0.
$$

We may choose $M>0$ such that $ \Pr{ 2{ \mathcal{E} } \leq  M  \overline{  \mathcal{E}}^{1/ \lambda} \textrm{ and } \overline{ \tau}_{i} \leq i^{(1+\lambda)/2} \textrm { for } \sqrt {N} \leq i \leq N} \geq 1- \epsilon$. It is hence enough to check that 
 $$\Pr { \exists \  \sqrt {N} \leq i \leq   {N}  \textrm{ such that }\mathcal{P}_{ M i^{(1+ \lambda)/(2 \lambda)}}  >  i }  \quad\mathop{\longrightarrow}_{N \rightarrow \infty} \quad  0.$$
This  follows from the fact that since $(1+ \lambda)/(2 \lambda)<1$,  there exists a constant $C>0$ such that $ \Pr { \mathcal{P}_{ M i^{(1+ \lambda)/(2 \lambda)}}>i} \leq \exp(-Ci)$ for every $i \geq 0$ (this comes from a simple application of Markov's exponential inequality). Combined with \eqref{eq:g1}, this completes the proof in the case $ \lambda>1$.
   \end{proof}

\subsection{Proof of Lemma \ref{lem:tech2}}

\begin{proof}[Proof of Proof of Lemma \ref{lem:tech2}] Recall that here $ \lambda>0$. Set $x_{N}=1/ \ln(N)$ to simplify notation.   By \eqref{eq:ubarre} and \eqref{eq:v}, it is  sufficient to show that
    $$\Pr{ \exists \ \rho(2) \leq t \leq \rho(N)-x_N \textrm{ such that }   \overline{\mathcal{P}}_{( \overline{\tau}_{N}+ \overline{\mathcal{E}}) \left(1- e^{- \lambda t} \right)} <\mathcal{P}_{ \mathcal{E}( e^{ t}-1)}}   \leq  \frac{C}{N^2}.$$
    By making the time change $s= \mathcal{E}(e^t-1)$,      this is equivalent to showing that
   $$\Pr {\exists \ \tau_{2} \leq s \leq ( \mathcal{E}+ \tau_{N})e^{-x_N}- \mathcal{E} \textrm{ such that }  \overline{\mathcal{P}}_{( \overline{\tau}_{N}+ \overline{\mathcal{E}}) \left(1-  \frac{ \mathcal{E}^ \lambda}{ ( \mathcal{E}+s)^ \lambda} \right)} <\mathcal{P}_{s}}   \leq  \frac{C}{N^2}.$$
We first check that   \begin{equation}
\label{eq:01}\Pr {\exists \ \tau_{2} \leq s \leq  {  \tau}_{ N^{3/4}} \textrm{ such that }  \overline{\mathcal{P}}_{( \overline{\tau}_{N}+ \overline{\mathcal{E}}) \left(1-  \frac{ \mathcal{E}^ \lambda}{ ( \mathcal{E}+s)^ \lambda} \right)} <\mathcal{P}_{s}}   \leq  \frac{C}{N^2}.
\end{equation}
To this end, write
\begin{eqnarray*}
\Pr {\exists \ \tau_{2} \leq s \leq  {  \tau}_{ N^{3/4}}; \  \overline{\mathcal{P}}_{( \overline{\tau}_{N}+ \overline{\mathcal{E}}) \left(1-  \frac{ \mathcal{E}^ \lambda}{ ( \mathcal{E}+s)^ \lambda} \right)} <\mathcal{P}_{s}} &=&  \Pr { \exists \  2 \leq i \leq  N^{3/4} ; \ \overline{\mathcal{P}}_{( \overline{\tau}_{N}+ \overline{\mathcal{E}}) \left(1-  \frac{ \mathcal{E}^ \lambda}{ ( \mathcal{E}+ \tau_{i})^ \lambda} \right)} <i}\\
&\leq& \Pr { \exists \  2 \leq i \leq  N^{3/4} ; \ \overline{\mathcal{P}}_{ \overline{\tau}_{N} \left(1-  \frac{ \mathcal{E}^ \lambda}{ ( \mathcal{E}+ \tau_{i})^ \lambda} \right)} <i}
\end{eqnarray*}
Observe that $\overline{\mathcal{P}}_{ \overline{\tau}_{N} \left(1-  { \mathcal{E}^ \lambda}{ /( \mathcal{E}+ \tau_{i})^ \lambda} \right)} \geq \overline{\mathcal{P}}_{ \overline{\tau}_{N} \left(1-  1/2^\lambda \right)} $ on the event $ \tau_i \geq \mathcal{E}$, and that there exists a constant $C>0$ such that $\Pr{\overline{\mathcal{P}}_{ \overline{\tau}_{N} \left(1-  1/2^\lambda \right)}<i}\leq \exp(- CN)$ for every $N \geq 1$ and $1\leq i \leq N^{3/4}$. On the other hand, since there exists a constant $C_0>0$ such that $1- \frac{1}{(1+x)^\lambda} \geq 2 C_0 x$ for every $0 \leq x \leq 1$, it follows that on the event $ \tau_i \leq \mathcal{E}$, we have $\overline{\mathcal{P}}_{ \overline{\tau}_{N} \left(1-  { \mathcal{E}^ \lambda}{ /( \mathcal{E}+ \tau_{i})^ \lambda} \right)} \geq \overline{\mathcal{P}}_{ 2 C_0 \overline{\tau}_{N} \tau_i/ \mathcal{E}} \geq \overline{\mathcal{P}}_{ 2 C_0 \overline{\tau}_{N} \tau_i/ (\tau_i+\mathcal{E})}$. Since $ \Pr{\overline{\tau}_{N} \leq N/2}=oe(N)$, it is therefore enough to check that 
\begin{equation}
\label{eq:amq}\Pr { \exists \  2 \leq i \leq  N^{3/4} ; \ \overline{\mathcal{P}}_{   \frac{ \tau_{i} }{ \tau_{i}+\mathcal{E}}   \cdot  C_0 N   } <i } \leq  \frac{C}{N^{2}}.
\end{equation}
It is a simple matter to check that the law of $ \tau_{i}/( \tau_{i}+ \E)$ has density $ix^{i-1}$ on $[0,1]$. Hence
\begin{eqnarray*}
\Pr {\overline{\mathcal{P}}_{   \frac{\tau_{i} }{ \tau_{i}+\mathcal{E}}     \cdot  C_0 {N} } =j} &=&  \frac{1}{j!}\Es { { \left(  \frac{ \tau_{i} }{ \tau_{i}+\mathcal{E}}   \cdot  C_0 {N} \right) ^{j}} e^{-  \frac{\tau_{i} }{ \tau_{i}+\mathcal{E}}   \cdot  C_0 {N}}} \\
&=& \frac{ C_0 ^{j}}{ j! }\int_{0}^{1} dx \ i \cdot  x^{i-1} \cdot x^{j} \cdot N^{j} \cdot e^{- x  C_0 N} =  \frac{i }{j! (C_{0} N)^i} \int_{0}^{C_{0}N} u^{i+j-1} e^{-u}du \\
& \leq & \frac{i }{j! (C_{0} N)^i} \int_{0}^{\infty} u^{i+j-1} e^{-u}du = \frac{(i+j-1)!}{j!} \frac{i }{( C_0 N)^{i}}.
\end{eqnarray*}
Then, for $i \geq 2$ we have
$$ \Pr{\overline{\mathcal{P}}_{ \left( 1- \frac{ \mathcal{E} }{ \tau_{i}+\mathcal{E}} \right)  \cdot  C_0 {N}   } <i } \leq  \sum_{j=0}^{i-1}\frac{(i+j-1)!}{j!}  \cdot \frac{i }{(N C_0 )^{i}}= \frac{(2i-1)!}{(i-1)!}	\cdot \frac{1}{( C_0 N)^{i}} \leq C \left(  \frac{4i}{ C_0 e} \right) ^{i} \cdot \frac{1}{N^{i}},$$
where we have used Stirling's formula for the last inequality. Hence \eqref{eq:amq} will follow if we establish the existence of an integer $M \geq 1$ such that
$$ N^{2} \sum_{i=M}^{N^{3/4}} \left(  \frac{4i}{ C_0 e} \right) ^{i} \cdot \frac{1}{N^{i}}  \quad\mathop{\longrightarrow}_{N \rightarrow \infty} \quad 0.$$
We check that $M=9$ works by writing, for $N$ sufficiently large,
$$N^{2} \sum_{i=9}^{N^{3/4}} \left(  \frac{4i}{ C_0 e} \right) ^{i} \cdot \frac{1}{N^{i}} \leq N^{2} \sum_{i=9}^{N^{3/4}} \left(  \frac{4N^{3/4}}{ C_0 e} \right) ^{i} \cdot \frac{1}{N^{i}} \leq  N^{2} \sum_{i=9}^{ \infty} \left(  \frac{4}{ C_0 eN^{1/4}} \right) ^{i} \leq N^{2} \left(  \frac{4}{ C_0 eN^{1/4}} \right) ^{9} = C \frac{1}{N^{1/4}},$$
which tends to $0$ as $N \rightarrow \infty$.

 To complete the proof of Lemma \ref{lem:tech2} it is therefore enough to check that 
\begin{equation}
\label{eq:l1}\Pr {\exists \  { \tau}_{N^{3/4}} \leq s \leq ( \mathcal{E}+ \tau_{N})e^{-x_N}- \mathcal{E} \textrm{ such that }  \overline{\mathcal{P}}_{( \overline{\tau}_{N}+ \overline{\mathcal{E}}) \left(1-  \frac{ \mathcal{E}^ \lambda}{ ( \mathcal{E}+s)^ \lambda} \right)} <\mathcal{P}_{s}}  \leq  \frac{C}{N^2}.
\end{equation}
   A simple calculation shows that  the function $ s \mapsto \frac{1}{s}  \left(1-  \frac{ \mathcal{E}^ \lambda}{ ( \mathcal{E}+s)^ \lambda} \right)$ is decreasing on $ \R_{+}$. Hence, for $0 \leq s \leq ( \mathcal{E}+ \tau_{N})e^{-x_N}- \mathcal{E}$,
  $$  \frac{\overline{\tau}_{N}+ \overline{\mathcal{E}}}{s}  \left(1-  \frac{ \mathcal{E}^ \lambda}{ ( \mathcal{E}+s)^ \lambda} \right) \geq  \frac{ \overline{ \tau}_{N}+ \overline{ \mathcal{E}}}{ ( \mathcal{E}+ \tau_{N}) e^{-x_N}- \mathcal{E}} \left( 1- \frac{ \mathcal{E}^ \lambda}{( \mathcal{E}+ \tau_{N})^ \lambda} e^ { \lambda x_N} \right)=  \frac{ \overline{ \tau}_{N}}{ \tau_{N}} e^{x_N} \cdot A_{N},$$
 where 
 $$A_{N}= \frac{1+ \overline{ \mathcal{E}}/ \overline{ \tau}_{N} }{1- \mathcal{E}(e^{x_{N}}-1)/ \tau_{N}}\left( 1- \frac{ \mathcal{E}^ \lambda}{( \mathcal{E}+ \tau_{N})^ \lambda} e^ { \lambda x_N} \right).$$
 Hence
 \begin{eqnarray}
 &&\Pr {\exists \  { \tau}_{N^{3/4}} \leq s \leq ( \mathcal{E}+ \tau_{N})e^{-x_N}- \mathcal{E} \textrm{ such that }  \overline{\mathcal{P}}_{( \overline{\tau}_{N}+ \overline{\mathcal{E}}) \left(1-  \frac{ \mathcal{E}^ \lambda}{ ( \mathcal{E}+s)^ \lambda} \right)} <\mathcal{P}_{s}}  \notag \\
 && \qquad\qquad\qquad \qquad \leq   \Pr{\exists \  { \tau}_{ N^{3/4}} \leq s \leq ( \mathcal{E}+ \tau_{N})e^{-x_N}- \mathcal{E} \textrm{ such that }  \overline{  \mathcal{P}}_{ \frac{ \overline{ \tau}_{N}}{ \tau_{N}} e^{x_N}  A_{N}\cdot  s} < \mathcal{P}_{s}}. \label {eq:cool}
 \end{eqnarray}
 We then claim that $ \Pr { \overline{ \tau}_{N}/ \tau_{N} \leq  1-1/N^{1/4}} =oe(N)$ and that $ \Pr {A_{N} \leq 1-1/ N^{ \lambda/4}}= oe(N)$. For the first claim, by the same argument that lead us to Lemma \ref {lem:dev} (ii) we get that $ \Pr { \tau_{N} \geq  N+N^{3/5}}=oe(N)$ and $\Pr { \overline{\tau}_{N} \leq   N-N^{3/5}}=oe(N)$, so that $ \Pr {\tau_{N} \leq   N+N^{3/5}, \overline{\tau}_{N}\geq    N-N^{3/5}}=1-oe(N)$. It then suffices to notice that $ (N-N^{3/5})/(N+N^{3/5}) \geq 1-N^{-1/4}$ for $N \geq 20$. For the second one for $N$ sufficiently large we have $e^{ \lambda x_{N}} \leq 2$ and  $A_{N} \geq 1-  ( 2\mathcal{E} / ( \mathcal{E}+  \tau_{N})) ^{ \lambda}$. Hence $ \Pr {A_{N} \leq 1-1/ N^{ \lambda/4}}\leq  \Pr { \tau_{N}/ \mathcal{E} \leq 2  {N}^{1/4} }$, and the claim follows from  Lemma \ref {lem:dev} (iv). Next, since $ e^{x_{N}} \geq 1+\ln(N)^{-1}$ and  $(1-1/N^{1/4})(1-1/N^{\lambda/4})(1+ 1/\ln(N)) \geq 1+1/N^{ 1/3}$ for $N$ sufficiently large, the previous observations entail that
 $$ \Pr { \frac{ \overline{ \tau}_{N}}{ \tau_{N}} e^{x_N}  A_{N} \leq 1+1/N^{ 1/3} }=oe(N).$$
 By \eqref{eq:cool}, it is thus sufficient to show that
$$\Pr {\exists \  { \tau}_{N^{3/4}} \leq s \leq ( \mathcal{E}+ \tau_{N})e^{-x_N}- \mathcal{E} \textrm{ such that }  \overline{\mathcal{P}}_{ s (1+ N^{- 1/3})} <\mathcal{P}_{s}}  \leq  \frac{C}{N^2}.$$
Noting that  $ \Pr{ ( \mathcal{E}+ \tau_{N})e^{-{x_N}}- \mathcal{E} \geq   {  \tau}_{2N}}=oe(N)$ and that $ \Pr { \tau_{N^{3/4}} \leq N^{3/4}/2}=oe(N)$, this boils down to checking that
 $$\Pr {\exists \  {N}^{3/4}/2 \leq i \leq 2N \textrm{ such that }  \overline{\mathcal{P}}_{(1+ N^{- 1/3}) i} <i}  \leq  \frac{C}{N^2}.$$
 This easily follows from the fact that $ \Pr { \overline{\mathcal{P}}_{(1+ N^{-1/3}) i} <i} \leq e^{-Ci/ {N}^{2/3}}$ by Lemma \ref {lem:dev} (iii).
 
 \end{proof}

 \section{Extensions}

We conclude by proposing possible extensions and stating an open question. In a first direction, one may wonder what happens if instead of stopping the chase-escape process once either no infected or no susceptible individuals remain, one just looks at its final state (which is attained when no infected individuals remain). This does not change the final number of susceptible individuals, so that Theorems \ref {thm:stateS1} and \ref{thm:stateS2} remain unchanged. Limit theorems for the final number of recovered individuals then follow immediately from the relation $R^{(N)}=N+2-S^{(N)}$.

In other directions, one may imagine a possibility of immigration of susceptible vertices, or  consider a model where vertices can be of $n$ different types, and where, for $1 \leq i \leq n-1$, a vertex with type $i$ may only spread to a vertex of type $i+1$ with rate $ \lambda_{i}$. It would be interesting to study if similar limit theorems as those established in this work hold.

It also natural to study the chase-escape process on other types of graphs. In particular, what happens on the graph $ \mathbb{Z}^2$, starting with one infected vertex and a neighboring recovered vertex (and all the other vertices being susceptible)? Is it true that the critical value for $ \lambda$ is less than $1$? This question is due to James Martin and was communicated to us by Itai Benjamini.


\begin{thebibliography}{10}

\bibitem{AK90}
{\sc D.~Aldous and W.~B. Krebs}, {\em The ``birth-and-assassination'' process},
  Statist. Probab. Lett., 10 (1990), pp.~427--430.

\bibitem{AK68}
{\sc K.~B. Athreya and S.~Karlin}, {\em Embedding of urn schemes into
  continuous time {M}arkov branching processes and related limit theorems},
  Ann. Math. Statist., 39 (1968), pp.~1801--1817.

\bibitem{AN72}
{\sc K.~B. Athreya and P.~E. Ney}, {\em Branching processes}, Springer-Verlag,
  New York, 1972.
\newblock Die Grundlehren der mathematischen Wissenschaften, Band 196.

\bibitem{BK04}
{\sc E.~Ben-Naim and P.~L. Krapivsky}, {\em Size of outbreaks near the epidemic
  threshold}, Phys. Rev. E, 69 (2004), p.~050901.

\bibitem{Bil99}
{\sc P.~Billingsley}, {\em Convergence of probability measures}, Wiley Series
  in Probability and Statistics: Probability and Statistics, John Wiley \& Sons
  Inc., New York, second~ed., 1999.
\newblock A Wiley-Interscience Publication.

\bibitem{Bor08}
{\sc C.~Bordenave}, {\em On the birth-and-assassination process, with an
  application to scotching a rumor in a network}, Electron. J. Probab., 13
  (2008), pp.~no. 66, 2014--2030.

\bibitem{Bor14}
{\sc C.~Bordenave}, {\em Extinction probability and total progeny of
  predator-prey dynamics on infinite trees}, Electron. J. Probab., 19 (2014),
  pp.~no. 20, 1--33.

\bibitem{CS14}
{\sc J.~Cannarella and J.~A. Spechler}, {\em Epidemiological modeling of online
  social network dynamics}, preprint available on arxiv,
  http://arxiv.org/abs/1401.4208,  (2014).

\bibitem{DK65}
{\sc D.~J. Daley and D.~G. Kendall}, {\em Stochastic rumours}, J. Inst. Math.
  Appl., 1 (1965), pp.~42--55.

\bibitem{HP98}
{\sc O.~H{\"a}ggstr{\"o}m and R.~Pemantle}, {\em First passage percolation and
  a model for competing spatial growth}, J. Appl. Probab., 35 (1998),
  pp.~683--692.

\bibitem{JS03}
{\sc J.~Jacod and A.~N. Shiryaev}, {\em Limit theorems for stochastic
  processes}, vol.~288 of Grundlehren der Mathematischen Wissenschaften
  [Fundamental Principles of Mathematical Sciences], Springer-Verlag, Berlin,
  second~ed., 2003.

\bibitem{Jan04}
{\sc S.~Janson}, {\em Functional limit theorems for multitype branching
  processes and generalized {P}\'olya urns}, Stoch. Proc. Appl., 110 (2004),
  pp.~177--245.

\bibitem{Ken66}
{\sc D.~G. Kendall}, {\em Branching processes since 1873}, J. London Math.
  Soc., 41 (1966), pp.~385--406. (1 plate).

\bibitem{Kord05}
{\sc G.~Kordzakhia}, {\em The escape model on a homogeneous tree}, Electron.
  Comm. Probab., 10 (2005), pp.~113--124 (electronic).

\bibitem{KL05}
{\sc G.~Kordzakhia and S.~P. Lalley}, {\em A two-species competition model on
  {$\Bbb Z^d$}}, Stochastic Process. Appl., 115 (2005), pp.~781--796.

\bibitem{Kor13}
{\sc I.~Kortchemski}, {\em Predator-prey dynamics on infinite trees: a
  branching random walk approach}, arXiv:1312.4933,  (submitted).

\bibitem{LP07}
{\sc L.~Levine and Y.~Peres}, {\em Internal erosion and the exponent 3/4},
  Unpublished manuscript,  (2007).

\bibitem{Nor98}
{\sc J.~R. Norris}, {\em Markov chains}, vol.~2 of Cambridge Series in
  Statistical and Probabilistic Mathematics, Cambridge University Press,
  Cambridge, 1998.
\newblock Reprint of 1997 original.

\bibitem{Ric73}
{\sc D.~Richardson}, {\em Random growth in a tessellation}, Proc. Cambridge
  Philos. Soc., 74 (1973), pp.~515--528.

\end{thebibliography}
\end {document}